\documentclass[hidelinks,onefignum,onetabnum]{siamart220329}

\usepackage{amsmath}
\usepackage{amssymb}
\usepackage{amsfonts}
\usepackage{enumerate}
\usepackage{graphicx}
\usepackage{xcolor}
\usepackage[sort,compress]{cite} 
\usepackage{bm}
\usepackage{cleveref}

\def\RR{{\mathbb R}}
\def\QQ{{\mathbb Q}}

\def\NN{\mathbb{N}}

\def\zero{\bm{0}}
\def\sign{\,\mathrm{sign}}

\def\RRge{\RR_{\ge0}}
\def\RRgt{\RR_{>0}}

\def\I{\mathcal{I}}
\def\J{\mathcal{J}}

\newcommand{\abs}[1]{\left|#1\right|}
\newcommand{\norm}[2][]{\left\lVert #2 \right\rVert_{#1}}

\newcommand{\diffd}{\mathrm{d}}

\newcommand{\sbrm}[1]{\sb{\mathrm{#1}}}

\newcommand{\TT}{^{\mathrm{T}}}

\newcommand{\g}{\bm{g}}

\newcommand{\R}{\mathbf{R}}
\newcommand{\F}{\mathcal{F}}
\renewcommand{\S}{\mathcal{S}}
\newcommand{\FL}{\F_L}
\newcommand{\FLR}{\F_{L,\R}}
\newcommand{\E}{\mathcal{E}}
\newcommand{\EN}{\E_N}
\newcommand{\meas}{u}
\newcommand{\diffout}{\bm{\mathrm{y}}}

\newcommand{\Diff}{\mathcal{D}}

\newcommand{\M}{\bm{\mathrm{M}}}
\newcommand{\Q}{\bm{\mathrm{Q}}}

\newcommand{\gop}{\bar{g}}
\newcommand{\goptT}[2]{\gop_{N,#2}^{m,L}(#1)}
\newcommand{\gopt}[1]{\goptT{T}{#1}}

\newtheorem{example}{Example}

 \newsiamremark{remark}{Remark}
\newsiamremark{clm}{Claim}
\newsiamthm{defin}{Definition}
\newtheorem{eexample}{Example}

\headers{DETERMINISTICALLY OPTIMAL DIFFERENTIATORS}{R. SEEBER AND H. HAIMOVICH}

\title{Deterministically Optimal\\ Robust Exact Differentiators \\ of arbitrary order\thanks{\funding{Agencia I+D+i grant PICT-2021-I-A-00730, Argentina.}} }

\author{Richard Seeber\thanks{Graz University of Technology, Graz, Austria (\email{richard.seeber@tugraz.at})}
  \and Hernan Haimovich\thanks{Centro Internacional Franco-Argentino de Ciencias de la Informaci\'on y Sistemas (CIFASIS), UNR-CONICET, Rosario, Argentina
  (\email{haimovich@cifasis-conicet.gov.ar}).}}

\begin{document}

\maketitle

\begin{abstract}
Estimation of the derivatives of a function with bounded high-order derivative in the presence of bounded measurement noise is considered, in a deterministic setting. 
Theoretical fundamental limitations of causal differentiators in terms of the lowest achievable worst-case differentiation error and desired properties such as exactness---the zero-error estimation of derivatives in the absence of noise---and robustness---the small sensitivity of the estimate under small perturbations---are established for the first time for arbitrary differentiation order.
Differentiators that achieve these theoretically lowest bounds on their differentiation error are formally defined, fully characterized, and their properties are studied.
In particular, deterministically optimal differentiators---those featuring optimal differentiation error bounds among the class of exact differentiators---are shown to exist by means of a novel construction exhibiting, in addition to exactness from the beginning, a very strong form of robustness.
\end{abstract}

\begin{keywords}
    Differentiation, best worst-case accuracy, strong robustness, fixed/finite-time convergence.
\end{keywords}

\begin{MSCcodes}
  26D10, 93B53, 93D40
\end{MSCcodes}

\section{Introduction}
\label{sec:intro}

The estimation of derivatives of a signal based on noisy or corrupted measurements is an old problem that has many practical applications. This problem, named signal differentiation, can be approached in different ways, depending on the information about the signal to be differentiated and the noise or perturbations affecting the measurements. Approaches to signal differentiation can be classified broadly as stochastic or deterministic. The stochastic approach involves some sort of statistical description for the noise and/or signal.
The deterministic one, in contrast, does not, but can involve hard bounds on specific variables, for example.

Another broad classification of differentiators is that of being causal or noncausal, a differentiator being the method or algorithm that produces the required estimates for the signal derivatives. Causal differentiators can produce an estimate based solely on past and present measurements, whereas noncausal ones can depend on future ones and typically are given blocks of measurements to estimate the derivatives that the signal had at any time during which the measurements were taken. Causal differentiators are suitable for online or real-time differentiation, as required for automatic control, whereas noncausal ones are suitable for post-processing blocks of data in order to estimate the whole functional form of the signal's derivatives, as required in plasma physics \cite{druyve_zphys30}, for example.

This paper studies causal differentiators from a deterministic approach. Methods for causal signal differentiation that can be suitable in a deterministic setting include algebraic methods \cite{ibrdio_ijamcs04,mbojoi_na09}, linear high-gain observers \cite{vasiljevic2008error,khapra_ijrnc14} and sliding-mode differentiators \cite{yuxu_el96,levant1998robust,levant_ijc03}. 
The different signal differentiation methods have varying degrees of performance depending on certain features of the signal and noise. 

Knowledge of a bound for some higher-order derivative of the signal to be differentiated is a usual assumption in relation to sliding-mode control \cite{levant1998robust,levant_ijc03}. This knowledge can be employed to design differentiators that output the exact value of the input signal's lower-order derivatives in the absence of noise after a finite-time transient, a property called exactness, and in addition so that noise of small amplitude perturbs the exact output only slightly, a property called robustness. Linear differentiators are inherently robust but cannot be exact except on a very narrow signal class, namely polynomials with a known bound on the degree. Sliding-mode differentiators, in contrast, can be exact on larger signal classes but this exactness also somewhat deteriorates their worst-case behavior in presence of noise \cite{levant2017sliding,seehai_auto23}.
The latter fact points at a feature that any differentiator that is exact on a large signal class exhibits: if the measurement noise is differentiable and of small amplitude, it can be theoretically impossible to distinguish the measured signal from a clean signal without noise.
The specific limitation in that regard is related to Landau-Kolmogorov type inequalities, relating bounds on a function to those of its derivatives \cite{levant2017sliding}.
These types of problems have been studied extensively in literature, see e.g. \cite{schcav_tr70,bagdas_book98,karlin_inbook76}.

Generalizations of the robust exact differentiators of \cite{levant1998robust} abound, especially for differentiation order one. For instance, \cite{cruzzavala2011uniform} develops a first-order robust and exact differentiator with fixed-time convergence, i.e.~where the finite convergence time is bounded irrespective of the initial conditions, and \cite{seehai_auto21} provides a design framework for differentiators with a user-defined, uniform bound on the convergence time. As regards arbitrary differentiation orders, \cite{levliv_tac12} relaxes the assumption of a constant bound on the high-order derivative to allow for some known time-varying function, whereas \cite{angmor_auto13} develops arbitrary-order differentiators with fixed-time convergence by leveraging homogeneity. Differentiators having the latter features were precisely analyzed in \cite{moreno_tac22}, for which also a design strategy is provided. \cite{levyu_tac18} develop exact differentiators that are not only robust to noise of small amplitude but also to possibly unbounded noise with small integral, and by replacing the assumption of bounded noise by a bounded (repeated) integral of the noise, \cite{levliv_ejc20} develop the so-called filtering differentiators. Despite the variety of existing generalizations and analyses, the fundamental theoretical limitations of robust and exact differentiators in the presence of bounded measurement noise, under knowledge of a constant bound for some high-order derivative, and in relation to their worst-case differentiation error and corresponding convergence time remains mostly unexplored, especially for differentiation orders greater than one, with the exception of \cite{seehai_auto23,seehai_cdc25,aldsee_tac25}.

In \cite{seehai_auto23}, a first-order causal differentiator that is exact from the beginning and robust almost from the beginning, i.e. that exhibits the fastest possible convergence speed among causal, exact and robust differentiators, was constructed for the first time. That construction, although based on the online adaptation of the time-length parameter of a standard difference quotient, involves many ad hoc steps with no obvious generalization to higher orders of differentiation. In \cite{seehai_cdc25}, the construction of \cite{seehai_auto23} is generalized to second-order differentiation by proposing a new interpretation of the construction. Unfortunately, even that generalization seems to be inappropriate for differentiation orders greater than two, and indeed, it is left unclear whether such generalization to arbitrary orders is in fact possible. 

In this paper, theoretical questions related to the best performance features of causal differentiators are posed and answered for the first time for arbitrary differentiation order. Specifically, precise definitions of optimal differentiators are developed in a deterministic setting in relation to the lowest achievable worst-case error \emph{at every time instant}, and the existence of these optimal differentiators is demonstrated constructively. These deterministically optimal differentiators are shown to be exact from the beginning, i.e. faster than existing fixed-time convergent differentiators (with the exception of \cite{seehai_auto23}), and to have a very strong form of robustness almost from the beginning, whereby small perturbations of \emph{any differentiator input} yield only small deviations of the corresponding output. To the best of the authors' knowledge, the explicit consideration of this form of strong robustness is novel and provided for the first time in the current paper. The exactness and robustness properties are considered in relation to the convergence time, as opposed to the standard asymptotic results of Levant in \cite{levant1998robust}, and they generalize the notions first proposed in \cite{seehai_auto23} to arbitrary differentiation order.

The remainder of this paper is organized as follows. A brief explanation of the notation employed is provided at the end of the current section. Section~\ref{sec:problem} introduces the specific problem addressed and the essential definitions and function spaces related to differentiators. The standard causality property as well as the specific exactness and robustness ones are introduced in Section~\ref{sec:features}, and a fundamental limitation regarding the latter two is shown.
Tight lower bounds on the worst-case differentiation error for general causal as well as for causal and exact differentiators are derived in Section~\ref{sec:worst-case-error}. These bounds are employed to precisely define deterministically optimal differentiators in Section~\ref{sec:opt-diff}, where the properties of these are also extensively studied, and their existence is established constructively.
Conclusions are drawn in Section~\ref{sec:conclusions}. 

\textbf{Notation:}
$\RR$, $\RR_{\ge 0}$, $\RR_{>0}$ denote the reals, nonnegative reals, and positive reals, respectively. $\NN$, $\NN_0$ and $\QQ$ denote the natural numbers, the naturals including 0, and the rational numbers, respectively. 
If $\alpha\in\RR$, then $|\alpha|$ denotes its absolute value.
For a set $A$ and a function $f$, the image of $A$ under $f$ is denoted by $f(A) = \{ f(a) : a \in A \}$.
The $i$-th derivative $\frac{\diffd^i f}{\diffd t^i}$ of a function $f$ of one variable is commonly written as $f^{(i)}$, with $\dot f = f^{(1)}$ and $\ddot f = f^{(2)}$ denoting its first and second derivative in particular.
One-sided limits of a function $f$ at time instant $T$ from above or below are written as $\lim_{t \to T^+} f(t)$ or $\lim_{t \to T^-} f(t)$, respectively. `Almost everywhere' is abbreviated as `a.e.'. If $x,y \in \RR^n$, then $x\ge y$ denotes the componentwise inequalities $x_i \ge y_i$ for $i\in \{1,\ldots,n\}$.
For $x\in\RR^n$, $\norm[\infty]{x}:= \max_{i=1,\ldots,n} |x_i|$ denotes its infinity norm. 
The bold symbols $\zero$, $\mathbf{1}$ denote vectors all of whose components equal 0 and 1, respectively, whose dimensions are given by the context.

\section{Preliminaries and Problem Statement}
\label{sec:problem}

This section introduces the considered problem of signal differentiation from noisy measurements and defines the considered classes of signals, noise and differentiator inputs.

Let $m\ge 1$ and consider the estimation of the first $m$ derivatives of a function $f : \RR_{\ge 0} \to \RR$ based on noisy measurements $\meas = f + \eta$, under the assumption that uniform bounds $N$ and $L$ for the noise $\eta$ and the $m+1$-th derivative $f^{(m+1)}$, respectively, exist.
More precisely, let $\F^m$ denote the set of functions $f : \RR_{\ge 0} \to \RR$ such that $f$ is $m$ times continuously differentiable and $f^{(m)}$ is Lipschitz continuous on $\RR_{\ge 0}$. The fact that $f\in\F^m$ then implies that the $m+1$-th derivative $f^{(m+1)}$ exists almost everywhere (a.e.) due to Rademacher's Theorem. The corresponding classes of signals to consider, from which the measurements are generated, are hence given by
\begin{subequations}
    \begin{align}
        \FL^m &= \{ f \in \F^m : \abs{f^{(m+1)}(t)} \le L \text{ a.e. on } \RR_{\ge 0}\} \\
        \EN &= \{ \eta : \RR_{\ge 0} \to \RR  : \abs{\eta(t)} \le N \text{ for all }t\ge 0 \}.
    \end{align}
\end{subequations}
Write $\FL^m + \EN = \{ f + \eta : f \in \FL^m, \eta \in \EN\}$ for the set of inputs $u$ with fixed $L$, $N$.
All conceivable inputs to a differentiator then belong to the set
\begin{equation}
    \mathcal U = \bigcup_{\substack{m \in \NN_0 \\ L,N \in \RR_{\ge 0}}} (\FL^m + \EN).
\end{equation}
It is clear that all functions in $\mathcal U$ are locally bounded, and that $\mathcal{U}$ contains, in particular, all uniformly bounded functions and all polynomials.
In that regard, note also that $\F_0^m$ is the set of polynomials of degree not greater than $m$. Both $\mathcal{U}$ and $\F_0^m$ are vector spaces over $\RR$ with the standard operations of function addition and scalar multiplication.
Whenever $m \in \NN$, $i\in\NN_0$, $i\le m$ and $f\in\FL^m$, the notation $f^{(i)}(0)$ will be used to denote the one-sided limit $\lim_{t\to 0^+} f^{(i)}(t)$, whose existence is ensured, and $f^{(0)} := f$.

A differentiator of order $\ell$ is an operator $\Diff : \mathcal U \to ( \RR_{\ge 0} \to \RR^\ell)$ that maps the measured signal $\meas$ to an estimate $\diffout = \Diff \meas$ of the vector $[\dot f, \ddot f, \cdots, f^{(\ell)}]\TT$ of the first $\ell$ derivatives of $f$.
The estimates for individual derivatives will be indicated by a subscript, so that $y_i = \Diff_i \meas$ denotes the estimate of the $i$-th derivative.

\section{Features and Properties of Differentiators}
\label{sec:features}

This section introduces important basic properties of differentiators.
We start with the standard definition of causality.

\begin{defin}[Causality]
A differentiator $\Diff$ is said to be causal if, for all $t \in \RR_{\ge 0}$, $[\Diff u_1](t) = [\Diff u_2](t)$ holds whenever $u_1(\tau) = u_2(\tau)$ holds for all $\tau \in [0, t]$.
\end{defin}

\subsection{Exactness and Robustness}

Two further important properties are exactness and robustness, as introduced by \cite{levant1998robust}. 
To formally define these properties, the worst-case differentiation error is introduced next.
\begin{defin}[Worst-case differentiation error]
    \label{def:acc:abs}
    Let $L, N \in \RR_{\ge 0}$, let $m,i\in\NN$, $i\le m$.
    A differentiator $\Diff$ of order $m$ is said to have
worst-case error $M_{N,i}^{\mathcal{S}}(t)$ for the $i$-th derivative from time $t\ge 0$ over the signal class $\mathcal{S} \subseteq \FL^m$ with noise bound $N$ if
            \begin{equation}
                M_{N,i}^{\mathcal{S}}(t) = \sup_{\substack{u=f+\eta \\ \eta\in\EN\\ f\in\mathcal{S}}} \sup_{\tau \ge t} \big|f^{(i)}(\tau) - [\Diff_i u](\tau)\big|
            \end{equation}
\end{defin} 
Let $\M_N^{\S}$ denote the column vector whose components are $M_{N,i}^{\S}$ for $i=1,\ldots,m$.

In practice, a differentiator may well have infinite worst-case error over the signal class $\FL^m$ at every finite time instant $t$, because $\FL^m$ contains signals with arbitrarily large initial conditions\footnote{``Initial conditions'' of $f\in\FL^m$ refers to the values $f^{(i)}(0)$ for all $i\in\NN_0$ with $i\le m$.}.
To still analyze worst-case behavior in such cases, define for all $R_0, \ldots, R_{m}\ge 0$ the class of signals with bounded $m+1$-th derivative that, in addition, have bounded initial values and lower-order derivatives, as 
\begin{equation}
    \FLR^m := \{ f \in \FL^m : |f^{(i)}(0)| \le R_i, i=0,\ldots,m \},
\end{equation}
where $f^{(0)} := f$ and $\R = [R_0 \quad R_1 \quad \ldots \quad R_{m}]\TT \in \RR_{\ge 0}^{m+1}$.
With this definition, the worst-case error $M_{N,i}^{\FLR^m}(t)$ over $\FLR^m$ is nonincreasing with respect to $t$ and nondecreasing with respect to $N$, $L$ and every component of $\R$.
Moreover, the worst-case error over $\FL^m$ is given by $M_{N,i}^{\FL^m}(t) = \sup_{\R \in \RR_{\ge 0}^{m+1}} M_{N,i}^{\FLR^m}(t)$.

The different notions of exactness and robustness are next recalled from \cite{seehai_auto23} and generalized to arbitrary differentiation order.
\begin{defin}[Exactness]
\label{def:exact}
    Let $L \in \RRge$, $m\in\NN$.
    A differentiator $\Diff$ of order $m$ is said to be 
    \begin{itemize}
        \item
            exact from the beginning over $\FL^m$ if $\M_0^{\FL^m}(t) = \zero$ for all $t > 0$;
        \item
            exact in fixed time $T \ge 0$ over $\FL^m$ if $\M_0^{\FL^m}(T) = \zero$;
        \item
            exact in finite time over $\FL^m$ if for every $\R \in \RR_{\ge 0}^{m+1}$ there exists a $t_{\R} > 0$ such that $\M_0^{\FLR^m}(t_{\R}) = \zero$;
        \item
            not exact over $\FL^m$ if it is not exact in finite time over $\FL^m$.
    \end{itemize}
    A differentiator $\Diff$ is said to be exact over polynomials of degree at most $m$ whenever it is exact over $\F_0^m$.
\end{defin}
\begin{defin}[Worst-case sensitivity]
Let $\mathcal{S} \subseteq \mathcal{U}$, $\epsilon \in \RRgt$, $m,i\in\NN$, $i \le m$. A differentiatior $\Diff$ is said to have worst-case sensitivity $Q^{\mathcal{S}}_{\epsilon,i}(t)$ for the $i$-th derivative from time $t\ge 0$ over the signal class $\mathcal{S}$ under perturbations bounded by $\epsilon$ if
\begin{align}
\label{eq:def:Q}
    Q^{\mathcal{S}}_{\epsilon,i}(t) &= \sup_{\substack{u_2=u_1+\eta \\ \eta\in\E_{\epsilon}\\ u_1 \in \mathcal{S}}} \sup_{\tau \ge t} \big| [\Diff_i u_1](\tau) - [\Diff_i u_2](\tau)\big|,\\
    \intertext{and $Q^{\mathcal{S}}_i(t)$ under infinitesimally small perturbations if}
    Q^{\mathcal{S}}_i(t) &= \lim_{\epsilon \to 0^+} Q^{\mathcal{S}}_{\epsilon,i}(t).
\end{align}
\end{defin}

Note that the limit in \eqref{eq:def:Q} always exists if $Q_{\epsilon,i}^{\mathcal{S}}(t)$ is finite for some $\epsilon$, because the latter is non-decreasing with respect to $\epsilon$ and is bounded from below by zero.
The quantity $Q^{\mathcal{S}}_{\epsilon,i}(t)$ equals the maximum deviation from time $t$ of the $i$-th differentiator output, for signals in $\mathcal{S}$ under perturbations bounded by $\epsilon$.
Let $\Q^{\mathcal{S}}_\epsilon(t)$ and $\Q^{\mathcal{S}}(t)$ denote the column vectors whose components are $Q^{\mathcal{S}}_{\epsilon,i}(t)$ and $Q^{\mathcal{S}}_i(t)$, respectively, for $i=1,\ldots,m$.

\begin{defin}[Robustness]
\label{def:robust}
    A differentiator $\Diff$ of order $m\in\NN$ is said to be
    \begin{itemize}
\item robust from the beginning over $\mathcal{S} \subseteq \mathcal{U}$, if $\Q^{\mathcal{S}}(0) = \zero$;
        \item robust almost from the beginning over $\mathcal{S} \subseteq \mathcal{U}$, if
 $\Q^{\mathcal{S}}(t) = \zero$ for all $t > 0$.
    \end{itemize}
\end{defin}
When the value of $L$ and the differentiation order $m$ are clear from the context, a differentiator that is robust over $\FL^m$ will just be called \emph{robust} in the following, while a differentiator that is robust over the largest set $\mathcal{S} \subseteq \mathcal{U}$ on which it is defined\footnote{For differentiators defined using differential equations or inclusions, this set $\mathcal{S}$ is usually the subset of all Lebesgue measurable functions in $\mathcal{U}$; for the differentiators introduced in this paper, this set is $\mathcal{U}$.} will be called \emph{strongly robust}.
Note that robustness, in this sense, considers only infinitesimal perturbations of noise-free input signals in $\FL^m$.
Strong robustness, on the other hand, considers infinitesimal perturbations of any input signal for which the differentiator is defined, and is thus indeed the strongest robustness property that may conceivably be defined.

\subsection{Limitations of Exactness and Robustness}

The following proposition shows that there are no differentiators that are both exact and robust from the beginning; hence, the best combination of properties one can hope to achieve, in some sense, is exactness from the beginning and robustness almost from the beginning.
\begin{proposition}
    \label{th:connections:causal-exact}
    Let $m \in \NN$, $L \in \RR_{> 0}$ and consider a causal differentiator $\Diff$ of order $m$ that is exact from the beginning over $\F_{0}^m$.
    Then, $\Diff$ is not robust from the beginning over $\E_0$.
\end{proposition}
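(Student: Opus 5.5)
The idea is to combine causality with exactness on polynomials. A bounded perturbation of arbitrarily small amplitude can coincide on a short initial interval with a polynomial of fixed slope. Exactness then forces the differentiator to report that slope, while on the unperturbed zero input it must report zero. Note first that $\E_0$ consists of the single function $u_1 \equiv 0$. Hence robustness from the beginning over $\E_0$ would mean that $Q^{\E_0}_{\epsilon,i}(0) \to 0$ as $\epsilon \to 0^+$ for every $i$. It therefore suffices to show that $Q^{\E_0}_{\epsilon,1}(0) \ge 1$ for every $\epsilon > 0$.

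Fix $\epsilon>0$ and let $p(t) = t$, which belongs to $\F_0^m$ since $m \ge 1$. Define the perturbation
\begin{equation*}
\eta(t) = \begin{cases} t, & 0 \le t \le \epsilon,\\ 0, & t > \epsilon. \end{cases}
\end{equation*}
Then $|\eta(t)| \le \epsilon$ for all $t \ge 0$, so $\eta \in \E_\epsilon$, and $u_2 = u_1 + \eta = \eta$. Moreover, $\eta$ is bounded and hence lies in $\mathcal{U}$, so $\Diff$ is defined on it.

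Take $\tau = \epsilon/2 > 0$. Since $\eta$ and $p$ agree on $[0,\tau]$, causality gives $[\Diff_1 \eta](\tau) = [\Diff_1 p](\tau)$. Exactness from the beginning over $\F_0^m$ (with $N=0$, so $M^{\F_0^m}_{0,1}(t)=0$ for all $t>0$) gives $[\Diff_1 p](\tau) = \dot p(\tau) = 1$. Likewise, since the zero function is in $\F_0^m$, we get $[\Diff_1 u_1](\tau) = 0$. Therefore
\begin{equation*}
Q^{\E_0}_{\epsilon,1}(0) \ge \bigl|[\Diff_1 u_1](\tau) - [\Diff_1 u_2](\tau)\bigr| = 1
\end{equation*}
for every $\epsilon>0$. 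Letting $\epsilon \to 0^+$ yields $Q^{\E_0}_1(0) \ge 1$, so $\Q^{\E_0}(0) \neq \zero$ and $\Diff$ is not robust from the beginning over $\E_0$.

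There is no real obstacle here. The only points needing care are the following:
\begin{itemize}
\item exactness must be used at a strictly positive time $\tau$, because Definition~\ref{def:exact} only asserts it for $t>0$;
\item the supremum over $\tau \ge 0$ in \eqref{eq:def:Q} includes this $\tau$;
\item the perturbation must be checked to lie in the domain $\mathcal{U}$.
\end{itemize}
The hypothesis $L>0$ is not needed for the argument.
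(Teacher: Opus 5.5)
Your proof is correct and takes essentially the same route as the paper: you perturb the zero input by a bounded function that coincides with a polynomial on an initial interval, so that causality and exactness from the beginning force a fixed nonzero output discrepancy at a positive time. The only difference is that the paper uses $\min\{ct^k,N\}$ with arbitrary $c$ to obtain $Q^{\E_0}_{N,k}(0)=\infty$ for every $k$, whereas your unit ramp for $k=1$ gives $Q^{\E_0}_1(0)\ge 1$, which already suffices for the claim.
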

\begin{remark}
Note that $\E_0$ contains only the zero function.
It is thus the smallest set over which robustness may be considered and differentiators lacking robustness over $\E_0$ lack robustness also over any other considered signal set.
\end{remark}
\begin{proof}
    Note that $\E_0 = \F_{0,\bm{0}}^m$.
    It will be shown that $Q^{\F_{0,\bm{0}}^m}_{N,k}(0) = \infty$ for all $N \in \RR_{> 0}$.
    To obtain a contradiction, assume $M_{0,k}^{\F_{0}^m}(\tau) = 0$ for all $\tau > 0$ and suppose existence of $c \in \RR_{> 0}$ such that  $Q^{\F_{0,\bm{0}}^m}_{N,k}(0) < c$ holds.
    Consider $f \in \F_{0,\bm{0}}^m$, $g \in \F_{0}^m$ defined as $f(t) = 0$, $g(t) = c t^k$, and define $u(t) = \min\{ g(t), N \}$.
    Let $\tau = \sqrt[k]{c^{-1} N}$ and note that $u(t) = g(t)$ for all $t \in [0, \tau]$.
    Then, due to causality and $M_{0,k}^{\F_0^k}(\tau) = 0$, it satisfies $[\Diff_k u](\tau) = [\Diff_k g](\tau) = g^{(k)}(\tau) = ck!$ as well as $[\Diff_k f](\tau) = f^{(k)}(\tau) = 0$.
    Then, $|[\Diff_k f](\tau) - [\Diff_k u](\tau)| = g^{(k)}(\tau) \ge c$, yielding the contradiction $Q^{\F_{0,\bm{0}}^m}_{N,k}(0) \ge c$ since $f \in \F_{0,\bm{0}}^m$ and $f-u \in \EN$. 
\end{proof}

\section{Lower Bounds for the Worst-case Differentiation Error}
\label{sec:worst-case-error}

In order to characterize lower bounds for the worst-case differentiation error of arbitrary causal or causal and exact differentiators, the worst-case differentiation error corresponding to specific bounded signals will first be analyzed.

\subsection{Worst-case Error for Bounded Functions}
For all $L,N\in\RRge$, $m\in\NN$, and $T\in\RRge$, define the set
\begin{align}
\label{eq:def:G}
    \mathcal{G}_{L,N}^{m}(T) &= \big\{ g\in\FL^{m} : |g(t)| \le N \text{ for all } t \in [0,T] \big\}
\end{align}
and consider, for $T>0$, the following Landau-Kolmogorov problem on the finite interval $[0,T]$
\begin{align}
\label{eq:def:gopt}
    \gopt{k} &= \sup \{ g^{(k)}(T) : g \in \mathcal{G}_{L,N}^{m}(T) \}.
\end{align}
Additionally, for $T\ge 0$, define
\begin{equation}
    \label{eq:def:gopt0}
        \underline g_{N,k}^{m,L}(T) = \sup \{ g^{(k)}(T) : g \in \mathcal{G}_{L,N}^{m}(T) \cap \F_{L,\bm{0}}^m \}.
\end{equation}
It is clear that both $\bar g_{N,k}^{m,L}(T)$ and $\underline g_{N,k}^{m,L}(T)$ are non-decreasing with respect to $N, L$.
Moreover, the former satisfies $\bar g_{0,k}^{m,L}(T) = 0$, $\lim_{T \to \infty} \bar g_{N,k}^{m,0}(T) = 0$ for all $L,N \in \RR_{\ge 0}$, $T \in \RR_{> 0}$ and the latter fulfills $\underline g_{0,k}^{m,L}(T) = \underline g_{N,k}^{m,0}(T) = \underline g_{N,k}^{m,L}(0) = 0$ for all $L,N,T \in \RR_{\ge 0}$.
The following lemma establishes some further important properties.
\begin{lemma}
\label{lem:gopt-limit}
    Let $L, N \in \RR_{\ge 0}$, $k,m\in\NN$, $k\le m$.
    Then,
    \begin{enumerate}[a)]
        \item
        \label{it:gopt-limit:ineq}
        $\underline g_{N,k}^{m,L}(T) \le \bar g_{N,k}^{m,L}(T)$ holds for all $T \in \RR_{> 0}$;
        \item
        \label{it:gopt-limit:nonincreasing}
        $\bar g_{N,k}^{m,L}(T)$ is non-increasing with respect to $T$;
        \item
        \label{it:gopt-limit:nondecreasing}
        $\underline g_{N,k}^{m,L}(T)$ is non-decreasing with respect to $T$;
        \item 
        \label{it:gopt-limit:limits}
        and $
        \lim_{\tau \to \infty} \underline g_{N,k}^{m,L}(\tau) = \lim_{\tau \to \infty} \bar g_{N,k}^{m,L}(\tau).
    $
    \end{enumerate}
\end{lemma}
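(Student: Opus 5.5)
Parts a)--c) follow from inclusions of the admissible sets combined with time shifts; part d) needs a compactness argument.

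\emph{Part a).} The inclusion $\mathcal{G}_{L,N}^m(T)\cap\F_{L,\bm 0}^m\subseteq\mathcal{G}_{L,N}^m(T)$ immediately gives $\underline g_{N,k}^{m,L}(T)\le \bar g_{N,k}^{m,L}(T)$.

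\emph{Part b).} Take $T_1<T_2$ and $g\in\mathcal{G}_{L,N}^m(T_2)$. Define the shifted function $h(t)=g(t+T_2-T_1)$. Then $h\in\FL^m$, $|h|\le N$ on $[0,T_1]$, and $h^{(k)}(T_1)=g^{(k)}(T_2)$. Hence every value admissible for $T_2$ is also admissible for $T_1$, so $\bar g(T_2)\le\bar g(T_1)$.

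\emph{Part c).} Take $T_1<T_2$ and $g\in\mathcal{G}_{L,N}^m(T_1)\cap\F_{L,\bm0}^m$. Define the delayed function $h(t)=0$ for $t\le T_2-T_1$ and $h(t)=g(t-T_2+T_1)$ otherwise. All derivatives of $g$ up to order $m$ vanish at $0$, so $h\in\F^m$ and $|h^{(m+1)}|\le L$ a.e. Moreover $h$ has zero initial conditions, satisfies $|h|\le N$ on $[0,T_2]$, and $h^{(k)}(T_2)=g^{(k)}(T_1)$. Therefore $\underline g(T_1)\le\underline g(T_2)$.

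\emph{Part d).} By b) and c), both limits exist; $\bar g$ is bounded below by $0$ (take $g=0$). By a), $\underline g(\tau)\le\bar g(\tau)$ for every $\tau$, so $\lim\underline g\le\lim\bar g$. It remains to show $\lim\bar g\le\lim\underline g$, and this is the main obstacle. I would fix $\varepsilon>0$ and a large $T$, and pick $g\in\mathcal{G}_{L,N}^m(T)$ with $g^{(k)}(T)\ge\bar g(T)-\varepsilon$.

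The idea is to splice $g$ onto a zero-initial-condition function and exploit a long horizon. First I would use Landau--Kolmogorov-type bounds on the interval $[0,T]$: $|g|\le N$ and $|g^{(m+1)}|\le L$ imply $|g^{(j)}(t)|\le C_j$ for all $j\le m$, with constants depending only on $N$, $L$, $m$ whenever the interval has length at least $1$. This holds at every point of the interval, endpoints included; it follows, for instance, from the Taylor/interpolation argument behind Markov's inequality. Consequently, the initial conditions of the functions in $\mathcal{G}_{L,N}^m(T)$ lie in a compact set $K$, uniformly in $T\ge1$.

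The splicing itself has a defect: I cannot simply transition from $0$ to $g$ while keeping $|\cdot|\le N$, because that would require slack in $N$. So I would instead work with $(1-\delta)g$, which is admissible with noise bound $(1-\delta)N$ and derivative bound $(1-\delta)L$. On $[0,S]$, with $S$ a large fixed time depending on $\delta$ and $K$, I would construct an $(m+1)$-fold integral of a bang-bang control that steers the jet from $\bm0$ to the jet of $(1-\delta)g(0)$. This transition must satisfy $|h^{(m+1)}|\le L$ and $|h|\le N$ throughout, which the slack $\delta N$, $\delta L$ and a long transition time make possible by controllability with small amplitude over long horizons. After time $S$, the function follows the shifted $(1-\delta)g$. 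This yields
\[
\underline g(T+S)\ge(1-\delta)g^{(k)}(T)\ge(1-\delta)(\bar g(T)-\varepsilon)\ge(1-\delta)(\lim\bar g-\varepsilon).
\]
Letting $T\to\infty$ (where $S$ depends only on $\delta$), and then $\varepsilon,\delta\to0$, gives the claim. The delicate step is the uniform-in-$T$ construction of this transition with amplitude below $N$. I would try to carry it out by scaling: take a fixed transition profile $p$ on $[0,1]$ joining jets, and use $p(t/S)$, whose $j$-th derivatives scale like $S^{-j}$. The remaining issue is that the zeroth-order mismatch $(1-\delta)g(0)$ must stay within $N$, which holds since $|(1-\delta)g(0)|\le(1-\delta)N$; a convex-combination-type profile then keeps the amplitude bounded by $N$ up to $O(1/S)$ terms.
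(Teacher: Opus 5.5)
\paragraph{Review.} Parts a)--c) are fine; the forward shift you use in b) is a valid and slightly more direct alternative to the paper's time-reversal argument. The gap is in d), at the transition step. It is in general \emph{impossible} to steer from the zero jet to $(1-\delta)$ times the initial jet of an admissible $g$ while keeping $|h|\le N$ and $|h^{(m+1)}|\le L$, however large $S$ is.

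\paragraph{Counterexample.} Take $m=k=1$, $L,N>0$ and $T\ge 4\sqrt{N/L}$. Let $g$ start with $g(0)=N$, $\dot g(0)=-2\sqrt{NL}$ and $\ddot g=L$ until it reaches $-N$ with zero slope. It then rests at $-N$, and finally rises with $\ddot g=L$ to $g(T)=N$, $\dot g(T)=2\sqrt{NL}=\bar g_{N,1}^{1,L}(T)$. This $g$ is exactly optimal, so your choice of $g$ may well be this one.

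Any $h$ with $|\ddot h|\le L$, $h(S)=(1-\delta)N$ and $\dot h(S)=-2(1-\delta)\sqrt{NL}$ satisfies
\[
h(S-s)\ge (1-\delta)N+2(1-\delta)\sqrt{NL}\,s-\tfrac{L}{2}s^2 .
\]
Set $s^*=2(1-\delta)\sqrt{N/L}$. If $S<s^*$, this bound contradicts $h(0)=0$. Otherwise it gives $h(S-s^*)\ge (1-\delta)N+2(1-\delta)^2N>N$ whenever $\delta<\tfrac12$.

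The obstruction is local at the splice point. The required overshoot is dictated by the target jet, while the available slack $\delta N$ vanishes as $\delta\to 0$; a long horizon buys nothing. Your scaling device does not help either. The endpoint derivatives of $p(t/S)$ are $O(S^{-j})$, so they cannot match a fixed nonzero target jet unless $p$ itself depends on $S$, and then the amplitude control is lost.

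\paragraph{How the paper avoids this.} The paper never splices onto a large jet. For $g_1\in\mathcal{G}_{L,N}^m(t)$ and $g_2\in\mathcal{G}_{L,N}^m(t)\cap\F_{L,\bm 0}^m$, it splices $\lambda g_1+(1-\lambda)g_2$ with a small \emph{fixed} $\lambda$. The initial jet of this combination is only $\lambda$ times that of $g_1$, so a Hermite polynomial of small amplitude on a fixed interval $[0,\theta]$ performs the transition. This yields
\[
\underline g_{N,k}^{m,L}(t+\theta)\ge(1-\lambda)\,\underline g_{N,k}^{m,L}(t)+\lambda\,\bar g_{N,k}^{m,L}(t).
\]
The loss factor $1-\lambda$ falls on $\underline g$ itself and cancels in the limit.

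\paragraph{Repairing your own route.} You can keep the $(1-\delta)$ idea if you replace the splice by a slow multiplicative cutoff applied to $g$ itself. Set $h=(1-\delta)\chi g$ with $\chi(t)=\chi_0(t/S)$, where $0\le\chi_0\le1$, $\chi_0=0$ near $0$, $\chi_0=1$ on $[1,\infty)$, and $S<T$. Then:
\begin{itemize}
\item $|h|\le N$ holds exactly on $[0,T]$;
\item $h$ has zero initial jet;
\item $h^{(k)}(T)=(1-\delta)g^{(k)}(T)$;
\item Leibniz' rule together with your uniform bounds $|g^{(j)}|\le C_j$ on $[0,T]$ gives $|h^{(m+1)}|\le(1-\delta)L+O(S^{-1})\le L$ for $S$ large, when $L>0$.
\end{itemize}
The case $L=0$ is trivial, since $\lim_{T\to\infty}\bar g_{N,k}^{m,0}(T)=0$.
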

\begin{proof}
    Item~\ref{it:gopt-limit:ineq}) is clear from the additional constraint that is present in \eqref{eq:def:gopt0} compared to \eqref{eq:def:gopt}.
    To see item~\ref{it:gopt-limit:nonincreasing}), note that, due to symmetry with respect to time reversal, $\gopt{k}$ may equivalently be written as
    \begin{equation}
    \label{eq:gopt:equiv}
        \gopt{k} = \sup \{ f^{(k)}(0) : f \in \mathcal{G}_{L,N}^{m}(T) \}
    \end{equation}
    and that $T_2 \ge T_1$ implies $\mathcal{G}_{L,N}^{m}(T_2) \subseteq  \mathcal{G}_{L,N}^{m}(T_1)$.
    To see item~\ref{it:gopt-limit:nondecreasing}), note that for every $g \in \mathcal{G}_{L,N}^{m}(T) \cap \F_{L,\bm{0}}^m$ and every $\theta \ge 0$, the function $\tilde g$ satisfying $\tilde g(\tau) = 0$ for all $\tau \in [0, \theta)$ and $\tilde g(\tau) = g(\tau-\theta)$ for $\tau \in [\theta, T+\theta]$ satisfies $\tilde g \in \mathcal{G}_{L,N}^{m}(T+\theta) \cap \F_{L,\bm{0}}^m$ and $\tilde g^{(k)}(T+\theta) = g^{(k)}(T)$.

    Since $\underline g_{N,k}^{m,L}$ is non-decreasing, $\bar g_{N,k}^{m,L}$ is non-increasing, and $\underline g_{N,k}^{m,L}(T) \le \bar g_{N,k}^{m,L}(T)$, both have well-defined limits.
    It remains to show $\lim_{\tau \to \infty} \underline g_{N,k}^{m,L}(\tau) \ge \lim_{\tau \to \infty} \bar g_{N,k}^{m,L}(\tau)$ to prove item~\ref{it:gopt-limit:limits}).
    For $L = 0$, this follows from $\lim_{\tau \to \infty} \bar g_{N,k}^{m,0}(\tau) = 0$.
    To show it also for $L > 0$, consider any $T > 0$ and $\R \in \RR_{\ge 0}^{m+1}$ sufficiently large such that $\mathcal{G}_{L,N}^{m}(T) \subseteq \FLR^m$, and hence $\mathcal{G}_{L,N}^{m}(t) \subseteq \FLR^m$ for all $t \ge T$.
    Then, there exists a sufficiently small $\lambda \in (0,1)$ and some $\theta \in \RR_{> 0}$ such that for each $g \in \F_{L,\lambda \R}^m$ there exists $h_g \in \F_{L,\bm{0}}^m$ with $|h_g(\tau)| \le N$ for $\tau \in [0,\theta]$ and $h_g(\tau) = g(\tau-\theta)$ for $\tau \ge \theta$.
    Indeed, after fixing $\theta$, such $\lambda$ and $h_g$ may be obtained, for example, by using a Hermite interpolation polynomial.
For each $t \ge T$ and $g_1 \in \mathcal{G}_{L,N}^{m}(t)$, $g_2 \in \mathcal{G}_{L,N}^{m}(t) \cap \F_{L,\bm{0}}^m$, now define the function $g = \lambda g_1 + (1-\lambda) g_2\in \F_{L,\lambda \R}^m$.
    Then, $h_g$ constructed as above fulfills $h_g \in \mathcal{G}_{L,N}^{m}(t+\theta) \cap \F_{L,\bm{0}}^m$.
    Consequently, $\underline g_{N,k}^{m,L}(t+\theta) \ge h_g^{(k)}(t+\theta) = \lambda g_1^{(k)}(t) + (1-\lambda) g_2^{(k)}(t)$ and
    taking the supremum over all $g_1 \in \mathcal{G}_{L,N}^{m}(t), g_2 \in \mathcal{G}_{L,N}^{m}(t) \cap \F_{L,\bm{0}}^m$ yields
    the inequality $
        \underline g_{N,k}^{m,L}(t+\theta) -  (1-\lambda) \underline g_{N,k}^{m,L}(t) \ge \lambda \bar g_{N,k}^{m,L}(t)
    $
    for all $t \ge T$.
    Taking limits on both sides  and dividing by $\lambda$ yields the claim.
\end{proof}

The next lemma establishes some important scaling properties of $\gopt{k}$.
\begin{lemma}
\label{lem:gopt-scaling}
    Let $L,N \in \RR_{\ge 0}$, $M \in [0,N]$, $k,m \in \NN$, $k \le m$.
    Then, the relations
    \begin{align}
    \label{eq:gopt-scaling}
        \bar g_{\beta N,k}^{m,\alpha L}(T) &= \sqrt[m+1]{\alpha^k \beta^{m+1-k}} \, \bar g_{N,k}^{m,L}(\sqrt[m+1]{\alpha \beta^{-1}} T) & & \text{for all $\alpha, \beta \in \RR_{>0}$}, \\
        \label{eq:gopt-homogeneity}
        \bar g_{\gamma N,k}^{m,\gamma L}(T) &= \gamma \bar g_{N,k}^{m,L}(T) & & \text{for all $\gamma \in \RR_{\ge 0}$,} \\
        \label{eq:gopt-timescaling}
        \bar g_{N,k}^{m,L}(T) &\ge \gamma^k \bar g_{N,k}^{m,L}(\gamma T) & & \text{for all $\gamma \in (0,1]$, and} \\
        \label{eq:gopt-addnoise}
        \bar g_{N+M,k}^{m,L}(T) &\le \bar g_{N,k}^{m,L}(T) + \bar g_{M,k}^{m,L/2}(T)
    \end{align}
    hold for each $T \in \RR_{> 0}$.
\end{lemma}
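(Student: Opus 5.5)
All four relations follow from elementary transformations of the feasible set $\mathcal{G}_{L,N}^{m}(T)$ in \eqref{eq:def:G}. The main tool is the amplitude-and-time rescaling $h(t) = a\,g(bt)$ with $a,b>0$. If $g \in \mathcal{G}_{L,N}^{m}(T')$, then $h \in \F^m$ and $|h^{(m+1)}| \le a b^{m+1} L$ a.e. Moreover $|h(t)| \le aN$ on $[0, T'/b]$, and $h^{(k)}(T'/b) = a b^k g^{(k)}(T')$. The map $g\mapsto h$ is a bijection between $\mathcal{G}_{L,N}^{m}(T')$ and $\mathcal{G}_{ab^{m+1}L,aN}^{m}(T'/b)$, since its inverse is a map of the same kind. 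Taking suprema therefore gives
\begin{equation*}
\bar g_{aN,k}^{m,ab^{m+1}L}(T'/b) = a b^k\, \bar g_{N,k}^{m,L}(T').
\end{equation*}

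For \eqref{eq:gopt-scaling}, choose $a=\beta$ and $b = \sqrt[m+1]{\alpha\beta^{-1}}$, so that $ab^{m+1}=\alpha$, and set $T' = bT$. The prefactor is then $a b^k = \beta (\alpha/\beta)^{k/(m+1)} = \sqrt[m+1]{\alpha^k\beta^{m+1-k}}$, as claimed.

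For \eqref{eq:gopt-homogeneity} with $\gamma>0$, take $a=\gamma$ and $b=1$; the map is then just multiplication by $\gamma$. For $\gamma = 0$, both sides vanish: the left side because $\bar g_{0,k}^{m,0}(T)=0$, which was noted before Lemma~\ref{lem:gopt-limit}, and the right side trivially. One caveat applies here and throughout: if some supremum is $+\infty$, the identities still hold in the extended sense, because the bijection preserves unboundedness.

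For \eqref{eq:gopt-timescaling}, take $a = 1$ and $b = \gamma \in (0,1]$ in the bijection, with $T'=\gamma T$. This gives $\gamma^k \bar g_{N,k}^{m,L}(\gamma T) = \bar g_{N,k}^{m,\gamma^{m+1}L}(T)$. The right-hand side is at most $\bar g_{N,k}^{m,L}(T)$ because $\bar g$ is non-decreasing in $L$ and $\gamma^{m+1}L \le L$.

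The inequality \eqref{eq:gopt-addnoise} is the main step. Take any $g \in \mathcal{G}_{L,N+M}^{m}(T)$; the aim is to write it as $g = g_1 + g_2$ with $g_1 \in \mathcal{G}_{L,N}^{m}(T)$ and $g_2 \in \mathcal{G}_{L/2,M}^{m}(T)$. The claim then follows from $g^{(k)}(T) = g_1^{(k)}(T)+g_2^{(k)}(T) \le \bar g_{N,k}^{m,L}(T) + \bar g_{M,k}^{m,L/2}(T)$. The first attempt is proportional splitting. If $N+M>0$, set $g_1 = \frac{N}{N+M} g$ and $g_2 = \frac{M}{N+M}g$. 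Then $|g_1|\le N$ and $|g_2| \le M$ on $[0,T]$, and the Lipschitz constants of the $m$-th derivatives are $\frac{N}{N+M}L$ and $\frac{M}{N+M}L$. The first is at most $L$, so $g_1$ is admissible. The second is at most $L/2$ only when $M \le N$, and this holds by the hypothesis $M\in[0,N]$; this explains both why the hypothesis is imposed and where the factor $L/2$ comes from. The case $N+M = 0$ is trivial. This step is the one I expect to need care, namely checking that the hypothesis $M \le N$ exactly yields $\frac{M}{N+M}\le \frac12$. If a sharper decomposition were required, I would fall back on the homogeneity \eqref{eq:gopt-homogeneity}. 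The proportional split seems to suffice.
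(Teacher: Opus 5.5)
Your proposal is correct and follows essentially the paper's proof: you use the same amplitude/time rescaling substitution (the paper applies it in the time-reversed form $\sup f^{(k)}(0)$, while you scale about $t=0$ and evaluate at $T'/b$), the same monotonicity-in-$L$ step for the time-scaling inequality, and the same proportional split with $\frac{M}{N+M}\le\frac12$ for the additive bound. The only cosmetic difference is that the paper carries out that split on the suprema via homogeneity, whereas you carry it out on individual functions $g = g_1 + g_2$.
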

\begin{proof}
    Let $n = m+1$ for simplicity.
    As for \eqref{eq:gopt-scaling}, use the substitution $f(\tau) = \beta g(\sqrt[n]{\alpha \beta^{-1}} \tau)$ in the equivalent representation \eqref{eq:gopt:equiv} of $\gopt{k}$ to obtain
    \begin{align*}
        \gop^{m,\alpha L}_{\beta N,k}(T) &= \sup\big\{ f^{(k)}(0) :
    f\in\F_{\alpha L}^m, |f(\tau)| \le \beta N \text{ for all } \tau \in [0,T] \big\}  \\
    &= \sup\big\{ \sqrt[n]{\alpha^k \beta^{n-k}} g^{(k)}(0) : g \in \FL^m, |g(\sqrt[n]{\alpha \beta^{-1}} \tau)| \le N \text{ for all }  \tau \in [0, T] \big\}   \\
    &= \sqrt[n]{\alpha^k \beta^{n-k}} \sup\big\{ g^{(k)}(0) : g \in \FL^m, |g(t)| \le N \text{ for all } t \in [0, \sqrt[n]{\alpha \beta^{-1}} T] \big \}.
    \end{align*}
    To prove \cref{eq:gopt-homogeneity}, note that it is trivial for $\gamma = 0$; otherwise, it is obtained by setting $\alpha = \beta = \gamma$ in \eqref{eq:gopt-scaling}.
    To show \cref{eq:gopt-timescaling}, use monotonicity with respect to $L$ and \eqref{eq:gopt-scaling} to obtain $\gop_{N,k}^{m,L}(T) \ge \gop_{N,k}^{m,\gamma^n L}(T) = \gamma^k \gop_{N,k}^{m,L}(\gamma T)$.
    For \cref{eq:gopt-addnoise}, which is trivial if $N=M=0$, define otherwise $\gamma_1 = N/(N+M)$, $\gamma_2 = M/(N+M) \le \frac{1}{2}$ and obtain
    \begin{align*}
        \gop^{m,L}_{N+M,k}(T) &= (\gamma_1 + \gamma_2) \gop^{m,L}_{N+M,k}(T) 
        = \gop^{m,\gamma_1 L}_{N,k}(T) + \gop^{m,\gamma_2 L}_{M,k}(T) 
        \le \gop^{m,L}_{N,k}(T) + \gop^{m,L/2}_{M,k}(T),
    \end{align*}
    where the second equality follows after distributing the product over the sum and employing \eqref{eq:gopt-homogeneity}, and the inequality from monotonicity with respect to $L$.
\end{proof}

The next lemma shows the limit of $\gopt{k}$ to be strictly concave in $N$ for $L > 0$.
\begin{lemma}
\label{lem:gopt:concavelimit}
Let $L,N \in \RR_{> 0}$, $M \in \RR_{\ge 0}$, $m \in \NN$ and $k \in \{1, \ldots, m\}$.
Then,
\begin{equation}
    \label{eq:gopt-concave}
        \lim_{T \to \infty} \bar g_{\lambda N + (1-\lambda)M,k}^{m,L}(T) > \lambda \lim_{T \to \infty} \bar g_{N,k}^{m,L}(T) + (1-\lambda) \lim_{T \to \infty} \bar g_{M,k}^{m,L}(T)
        \end{equation}
        holds for all $\lambda \in (0,1)$.
\end{lemma}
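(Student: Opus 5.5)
The plan is to show that $G(N) := \lim_{T\to\infty} \bar g_{N,k}^{m,L}(T)$ is, for fixed $L>0$, a positive multiple of a power $N^{p}$ with $0<p<1$. Strict concavity then follows from the strict concavity of $x\mapsto x^p$ on $\RR_{\ge 0}$.

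\textbf{Step 1: the limit exists and is finite.} By Lemma~\ref{lem:gopt-limit}\,\ref{it:gopt-limit:nonincreasing}), $\bar g_{N,k}^{m,L}(T)$ is non-increasing in $T$ and bounded below by $0$, since $g\equiv 0$ is admissible. Hence the limit exists in $[0,\infty]$. Finiteness holds as soon as $\bar g_{N,k}^{m,L}(T_0)<\infty$ for a single $T_0>0$. This is a finite-interval Landau--Kolmogorov bound, which I would obtain as follows. On $[0,T_0]$, write $g$ as its Taylor polynomial of degree $m$ at $0$ plus a remainder bounded by $LT_0^{m+1}/(m+1)!$. The polynomial part is then bounded by $N + LT_0^{m+1}/(m+1)!$ on $[0,T_0]$. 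By equivalence of norms on the finite-dimensional space $\F_0^m$, this bounds its coefficients and hence $g^{(k)}(0)$. Combined with the representation \eqref{eq:gopt:equiv}, this gives $G(N)<\infty$.

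\textbf{Step 2: power law in $N$.} Set $n=m+1$ and $\alpha=1$ in \eqref{eq:gopt-scaling}, which gives $\bar g_{\beta N,k}^{m,L}(T) = \beta^{(n-k)/n}\,\bar g_{N,k}^{m,L}(\beta^{-1/n}T)$ for all $\beta>0$. Letting $T\to\infty$ on both sides yields $G(\beta N)=\beta^{(n-k)/n}G(N)$. With $N=1$ this becomes $G(x)=c\,x^{p}$ for all $x\ge0$, where $c=G(1)$ and $p=(n-k)/n=(m+1-k)/(m+1)$. The case $x=0$ holds because $\bar g_{0,k}^{m,L}=0$. Since $1\le k\le m$, we have $p\in(0,1)$.

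\textbf{Step 3: $c>0$.} I would exhibit the function $g(t)=\varepsilon\sin(\omega t+\varphi)$ with $\omega>0$, $\varepsilon=\min\{1, L\omega^{-(m+1)}\}$, and $\varphi$ chosen so that $g^{(k)}(0)=\varepsilon\omega^k>0$. This $g$ lies in $\F_L^1{}^{m}$-sense in $\FL^m$, i.e.\ $g\in\FL^m$, and satisfies $|g|\le 1$ on all of $\RR_{\ge0}$. Hence $g\in\mathcal G_{L,1}^m(T)$ for every $T$. Using \eqref{eq:gopt:equiv}, this gives $\bar g_{1,k}^{m,L}(T)\ge\varepsilon\omega^k$ for every $T$, so $c\ge\varepsilon\omega^k>0$. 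Here $L>0$ is used.

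\textbf{Step 4: conclusion.} For $c>0$ and $p\in(0,1)$, the function $x\mapsto cx^p$ is strictly concave on $[0,\infty)$. Therefore $G(\lambda N+(1-\lambda)M)>\lambda G(N)+(1-\lambda)G(M)$ for $\lambda\in(0,1)$, provided $N\neq M$. This covers $M=0$, where the right-hand side is $\lambda cN^p<c(\lambda N)^p$.

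I note that for $M=N$ both sides coincide, so the strict inequality can only hold for $M\ne N$. I would add this hypothesis explicitly, as it is implicit in the lemma's intent.

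The main obstacle is Step 1, namely rigorously ensuring that the limit is finite so that the power-law identity is meaningful. The remaining steps are direct consequences of Lemma~\ref{lem:gopt-scaling}.
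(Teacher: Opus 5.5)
Your proposal is correct and follows essentially the same route as the paper. Both arguments use the scaling relation \eqref{eq:gopt-scaling} to write the limit as $c\,N^{(m+1-k)/(m+1)}$ and then apply strict concavity of $x\mapsto x^{p}$ with $c>0$. The paper gets $c>0$ from $\underline g_{N,k}^{m,L}(1)>0$ and \cref{lem:gopt-limit}, whereas you get it from an explicit bounded sinusoid. Your explicit finiteness check and your remark that $M\neq N$ must be assumed are both well taken: the paper's strict-inequality step tacitly requires $\gamma=M/N\neq 1$ and a finite limit.
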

\begin{proof}
Let $n=m+1$. Use the scaling relation \eqref{eq:gopt-scaling} to obtain
    \begin{equation}
        \label{eq:lim:barg:scaled}
        \lim_{T \to \infty} \bar g_{\beta N,k}^{m,L}(T) = \lim_{T \to \infty} \beta^\frac{n-k}{n} \, \bar g_{N,k}^{m,L}(\beta^{-\frac{1}{n}} T) = \beta^\frac{n-k}{n} \, \lim_{T \to \infty} \bar g_{N,k}^{m,L}(T)
    \end{equation}
    for all $\beta > 0$, wherein $\lim_{T \to \infty} \bar g_{N,k}^{m,L}(T) \ge \underline g_{N,k}^{m,L}(1) > 0$ since $L,N > 0$.
    Setting $\beta = \lambda + (1-\lambda) \gamma$ with $\gamma = \frac{M}{N} \in \RR_{\ge 0}$ then yields
    \begin{align}
         \lim_{T \to \infty} \bar g_{\beta N,k}^{m,L}(T) &= \left[\lambda + (1-\lambda)\gamma\right]^{\frac{n-k}{n}} \, \lim_{T \to \infty} \bar g_{N,k}^{m,L}(T) \nonumber
         > \left[\lambda + (1-\lambda)\gamma^{\frac{n-k}{n}}\right] \lim_{T \to \infty} \bar g_{N,k}^{m,L}(T) \nonumber\\
         \label{eq:lim:barg:concave}
         &= \lambda \lim_{T \to \infty} \bar g_{N,k}^{m,L}(T) + (1-\lambda) \lim_{T \to \infty} \bar g_{\gamma N,k}^{m,L}(T),
    \end{align}
    where the inequality follows from concavity of the function $x \mapsto x^p$ when $0<p=\frac{n-k}{n}<1$ and the fact that $\lambda,1-\lambda \in (0,1)$, and the last equality from \eqref{eq:lim:barg:scaled}. Since $\beta N = \lambda N + (1-\lambda) M$ and $\gamma N = M$, the inequality \eqref{eq:gopt-concave} follows from \eqref{eq:lim:barg:concave} and the proof is complete.
\end{proof}

The next two lemmata show that the the supremum in \eqref{eq:def:gopt} may also be replaced by the maximum, i.e., there always exists $g \in \mathcal{G}_{L,N}^m(T)$ with $g^{(k)}(T) = \gopt{k}$.
Moreover, it is shown that such $g$ can always be chosen such that it satisfies $|g(\tau)| = N$ only at finitely many, hence isolated, time instants.
\begin{lemma}
    \label{lem:G:der:bounded}
    Let $L, N \in \RR_{\ge 0}$, $T \in \RR_{> 0}$, and consider a sequence of functions $h_s \in \mathcal{G}_{L,N}^{m}(T)$ for all $s\in\NN$. Then, the sequence of derivatives $h_s^{(k)}$ is equibounded in $[0,T]$ for every $k=1,\ldots,m$.
\end{lemma}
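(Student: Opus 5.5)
The plan is to bound every $h^{(k)}$ on $[0,T]$ uniformly over $h\in\mathcal{G}_{L,N}^{m}(T)$, using only $|h|\le N$ on $[0,T]$ and $|h^{(m+1)}|\le L$ a.e. The tool is Taylor's formula around an arbitrary point $t_0\in[0,T]$, combined with inversion of a Vandermonde-type linear system. The resulting bound depends only on $L,N,T,m$, so it applies to every $h_s$ in the sequence and gives equiboundedness.

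\textbf{Step 1 (Taylor expansion).} Fix $t_0\in[0,T]$ and choose $m+1$ distinct nodes $t_0 < t_1<\dots<t_m$ (or $t_0>t_1>\dots>t_m$ if $t_0$ is near $T$), all lying in $[0,T]$ with spacing $T/(2m)$. This is possible since at least one of $[0,t_0]$, $[t_0,T]$ has length at least $T/2$. Because $h^{(m)}$ is Lipschitz, Taylor's formula with integral remainder gives
\[
h(t_j)=\sum_{i=0}^{m}\frac{h^{(i)}(t_0)}{i!}(t_j-t_0)^i + r_j, \qquad |r_j|\le \frac{L\,|t_j-t_0|^{m+1}}{(m+1)!}\le \frac{L T^{m+1}}{(m+1)!}.
\]

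\textbf{Step 2 (inversion).} Regard this as a linear system $V x = b$ for the unknown vector $x=(h^{(i)}(t_0)/i!)_{i=0}^m$. Here $V_{ji}=(t_j-t_0)^i$ is a Vandermonde matrix in the offsets $d_j=t_j-t_0=\pm jT/(2m)$, and $b_j=h(t_j)-r_j$ satisfies $|b_j|\le N+LT^{m+1}/(m+1)!$. The offsets are fixed multiples of $T$, and the two orientations (forward and backward) give only two possible matrices. Hence $V$ is invertible, and $\norm[\infty]{V^{-1}}$ is bounded by a constant $C(m,T)$ independent of $t_0$ and of $h$. This yields
\[
|h^{(k)}(t_0)|\le k!\,C(m,T)\Bigl(N+\frac{LT^{m+1}}{(m+1)!}\Bigr)
\]
for all $k\le m$ and all $t_0\in[0,T]$, which is the claimed equiboundedness.

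\textbf{Main obstacle.} The delicate point is making the constant uniform in $t_0$, including near the endpoints of $[0,T]$. The choice of nodes on one side only handles this: only two node configurations occur, up to the orientation sign, and the inverse of a Vandermonde matrix can be bounded explicitly via Lagrange basis polynomials. If one prefers to avoid the linear algebra, an alternative is induction using mean-value/divided-difference arguments. Divided differences of $h$ of order $k$ over points in $[0,T]$ equal $h^{(k)}(\xi)/k!$ at some interior $\xi$. Combining this with the Lipschitz bound on $h^{(m)}$ then propagates bounds downward from order $m+1$ and upward from order $0$. The Vandermonde route seems cleaner, so I would use it.
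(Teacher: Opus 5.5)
Your proof is correct, and it takes a genuinely different route from the paper.

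\textbf{The paper's argument.} It works by contradiction and compactness. It picks the largest order $\ell$ whose derivatives fail to be equibounded and normalizes $g_s = h_s^{(\ell)}/\sup_{[0,T]}|h_s^{(\ell)}|$. Since $g_s^{(1)}\to 0$ uniformly, Arzel\`a--Ascoli yields a subsequence converging uniformly to a constant $p=\pm 1$. Integrating $\ell$ times, polynomials of degree at most $\ell-1$ (built from the normalized initial values) would then converge uniformly on $[0,T]$ to $-p\,t^{\ell}/\ell!$. This is impossible because that finite-dimensional space is closed.

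\textbf{Your argument.} It is direct. You use Taylor's formula with integral remainder, which is legitimate because $h^{(m)}$ is Lipschitz, hence absolutely continuous with $|h^{(m+1)}|\le L$ a.e. You expand at $m+1$ equispaced nodes on whichever side of $t_0$ has room. You then invert one of only two fixed Vandermonde matrices, which is what makes the constant uniform in $t_0$.

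\textbf{What each approach buys.} Yours gives an explicit, quantitative bound. Writing $V = V_1\,\mathrm{diag}(1,T,\dots,T^m)$ with $V_1$ depending only on $m$, you in fact get $|h^{(k)}(t)|\le c_{m,k}\bigl(N T^{-k} + L T^{m+1-k}\bigr)$ on $[0,T]$. This has exactly the homogeneity of the scaling relation for $\gop_{N,k}^{m,L}$. It also gives at once finiteness of $\gop_{N,k}^{m,L}(T)$, an explicit upper bound on it, and hence an explicit bound on the initial-condition vector $\bar{\g}^{m,L}_{N}$. The paper's argument produces no constant. In exchange, it avoids node bookkeeping and reuses the same Arzel\`a--Ascoli compactness machinery that is needed anyway in the subsequent proof that extremal functions attaining $\gop_{N,k}^{m,L}(T)$ exist.
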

\begin{proof}
    For a contradiction, suppose that for some $k\in \{1,\ldots,m \}$ the derivatives $h_s^{(k)}$ are not equibounded and let $\ell$ be the largest such value of $k$. Without loss of generality, suppose then that 
    $\lim_{s\to\infty} \bar h_s^\ell = \infty$
    with $\bar h_s^\ell := \sup_{t\in [0,T]} |h_s^{(\ell)}(t)|$
    and that $h_s^{(k)}$ are equibounded for $k=\ell+1,\ldots,m+1$ (a.e. if $k=m+1$). Define the normalized sequence
    $
        g_s(t) = h_s^{(\ell)}(t)/\bar h_s^\ell$. 
By construction,
    $
        \sup_{t\in [0,T]} |g_s(t)| = 1$ holds for all $s \in \NN$,
    and since the $h_s^{(\ell+1)}$ are equibounded,
    $
\lim_{s\to\infty} \sup_{t\in [0,T]} |g_s^{(1)}(t)| = 0.
    $
    The sequence $g_s$ is thus equibounded and equicontinuous. By the Arzelà-Ascoli theorem, a subsequence that converges uniformly to a continuous function must exist. Using now $g_s$ to denote this subsequence, due to
$
\lim_{s\to\infty} \sup_{t\in [0,T]} |g_s^{(1)}(t)| = 0
    $,
    the derivatives $g_s^{(1)}$ converge uniformly to the zero function. Then, the limit function must be a nonzero constant $p$, either $p=1$ or $p= -1$. 
Repeated integration yields
    \begin{align*}
        \frac{h_s(t)}{\bar h_s^\ell} = \frac{h_s(0)}{\bar h_s^\ell} + \cdots + \frac{h_s^{(\ell-1)}(0)t^{\ell-1}}{\bar h_s^\ell (\ell-1)!} + \int_0^t \cdots\int_0^{\tau_{\ell-1}} \frac{h_s^{(\ell)}(\tau_\ell)}{\bar h_s^\ell} d\tau_\ell \cdots d\tau_1
    \end{align*}
    Taking limits, 
    the following expression should be true for all $t\in [0,T]$
    \begin{align*}
        \lim_{s\to\infty}\left(\frac{h_s^{(1)}(0)t}{\bar h_s^\ell} + \cdots + \frac{h_s^{(\ell-1)}(0)t^{\ell-1}}{\bar h_s^\ell (\ell-1)!}\right) = -\int_0^t \cdots\int_0^{\tau_{\ell-1}} p\ d\tau_\ell \cdots d\tau_1
        = -p \frac{t^\ell}{\ell !}
    \end{align*}
    and, moreover, the convergence should be uniform within $[0,T]$. This is impossible because polynomials of different degrees are linearly independent.
\end{proof}

\begin{lemma}
\label{prop:gopt:extremal}
    Let $L \in \RR_{\ge 0}$, $N, T \in \RR_{> 0}$, $k,m \in \NN$, and $k \le m$.
    Then,
    \begin{enumerate}[a)]
    \item
    \label{it:extremal:value}
    all $g \in \mathcal{G}_{L,N}^{m}(T)$ satisfying $g^{(k)}(T) = \gopt{k}$ also fulfill $g(T) = N$; and
    \item 
    \label{it:extremal:existence}
    there exists $g \in \mathcal{G}_{L,N}^{m}(T)$ satisfying $g^{(k)}(T) = \gopt{k}$ such that $|g(\tau)| = N$ holds only at finitely many time instants $\tau \in [0,T]$.
    \end{enumerate}
\end{lemma}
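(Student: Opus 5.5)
For item~\ref{it:extremal:value}), I would argue by contradiction via a local perturbation. Suppose $g \in \mathcal{G}_{L,N}^{m}(T)$ attains $g^{(k)}(T) = \gopt{k}$ but $g(T) < N$. Then $g(T) > 0$ cannot be assumed, but since $g(T) < N$ and $g$ is continuous, there is $\delta > 0$ with $g(\tau) < N - \epsilon$ on $[T-\delta, T]$ for some $\epsilon > 0$.

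The first step is to find a perturbation that increases the $k$-th derivative at $T$ while keeping the constraints. Two constraints must be respected: the bound $\abs{g^{(m+1)}} \le L$ and the bound $\abs{g} \le N$. The difficulty is that $g^{(m+1)}$ may already saturate at $\pm L$. To handle this I would not add a bump. Instead I would use the scaling structure: consider $\tilde g = g + c\,\phi$, where $\phi$ is a polynomial of degree at most $m$. Such a term does not change $g^{(m+1)}$, which keeps $\tilde g \in \FL^m$.

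The polynomial cannot be localized, though, so I would instead combine $g$ with a time-compressed copy. Concretely, take $h(\tau) = g(T - \gamma(T-\tau))$ for $\gamma$ slightly larger than $1$, restricted to $[0,T]$. Only the values of $g$ on $[T - T/\gamma\cdot\gamma\ldots]$ matter; precisely, $h$ samples $g$ on $[T-\gamma T, T]$, which goes outside $[0,T]$. That fails, so I would reverse roles.

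The cleaner route is the following. Since $g(T) < N$, pick $\lambda > 1$ close to $1$ with $\lambda g(\tau) \le N$ near $T$. This still fails away from $T$, where $\abs{g}$ may equal $N$.

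The route I would actually commit to is a convex-combination perturbation. Let $p$ be the polynomial of degree $\le m$ with $p^{(k)}(T) = 1$, all other derivatives $p^{(j)}(T)=0$ for $j \le m$, $j\neq k$, and in particular $p(T)=0$. Then set $\tilde g = (1-\mu) g + \mu\,\psi$, where $\psi(\tau) = \sigma\, p(\tau)\,\chi(\tau)$. Here $\chi$ is a smooth cutoff equal to $1$ near $T$ and supported in $(T-\delta, T]$, and $\sigma > 0$ is chosen small so that $\abs{\psi} \le \epsilon$ and $\abs{\psi^{(m+1)}} \le L$. Then $\tilde g^{(m+1)}$ is a convex combination of two functions bounded by $L$, so $\tilde g \in \FL^m$. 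On $[T-\delta,T]$ we have $\abs{\tilde g} \le N$, because $g$ stays within $N - \epsilon$ there, or is shifted appropriately using $g \ge -N$. Outside that interval, $\tilde g = (1-\mu)g$, which is bounded by $N$.

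The next step is to compare $k$-th derivatives at $T$. We get $\tilde g^{(k)}(T) = (1-\mu)\gopt{k} + \mu\sigma$. This exceeds $\gopt{k}$ precisely when $\sigma > \gopt{k}$, which generally fails. The fix is to reduce the right side by $(1-\mu)$ only locally. This is the main obstacle: increasing the $k$-th derivative at $T$ while keeping the $(m+1)$-th derivative saturated elsewhere.

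My resolution is as follows. The value $\gopt{k}$ is achieved by a function with $g^{(m+1)} = \pm L$ (bang-bang). I would modify $g^{(m+1)}$ on a set near $T$ where $g^{(m+1)} = -L$, switching it to $+L$ on a small set. Integrating back, the change in $g$ on $[T-\delta,T]$ is $O(\text{measure})$, while the change in $g^{(k)}(T)$ is positive. If $g^{(m+1)} = +L$ a.e. near $T$, I would argue from the derivative bounds (Lemma~\ref{lem:G:der:bounded}) and use time reversal to move the modification to the interior instead.

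For item~\ref{it:extremal:existence}), existence follows from compactness. Take a maximizing sequence $h_s$. By Lemma~\ref{lem:G:der:bounded}, the derivatives $h_s^{(j)}$, $j\le m$, are equibounded, and they are equi-Lipschitz. Arzelà--Ascoli then gives a subsequence converging in $C^m$, and the limit stays in $\mathcal{G}_{L,N}^{m}(T)$ because the Lipschitz bounds and the pointwise bounds are closed conditions. The limit attains $\gopt{k}$.

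For finiteness of the contact set, I would show that an extremal can be taken with $g^{(m+1)}$ piecewise constant $\pm L$ with finitely many switches. I would obtain this via a linear-programming / moment-problem (Chebyshev-system) argument: maximizing a linear functional $g^{(k)}(T)$ subject to the constraints. Such a $g$ is piecewise polynomial, so $\abs{g}=N$ has finitely many solutions unless $g \equiv \pm N$ on some piece. That case is excluded, since a constant piece would allow a local increase as in part a).
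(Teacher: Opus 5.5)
Your argument for item~a) has a genuine gap. Of the routes you try, only the local switching idea works. It needs no bang-bang assumption, and you may not make one anyway: item~a) concerns \emph{every} maximizer, and maximizers need not be bang-bang (see below). The argument is this. If $g^{(m+1)}<L$ on a set $E\subseteq[T-\delta,T]$ of positive measure, raise $g^{(m+1)}$ to $L$ on $E$. This changes $g$ on $[T-\delta,T]$ by a nonnegative amount at most $2L\delta^{m+1}/(m+1)!$, so $-N\le\tilde g\le N$ survives for small $\delta$ because $g<N-\epsilon$ there, while $g^{(k)}(T)$ strictly increases. What remains is exactly the case $g^{(m+1)}=L$ a.e.\ on some $[T-\delta_0,T]$. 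There every admissible change of $g^{(m+1)}$ on that interval is nonpositive and cannot raise $g^{(k)}(T)$. Your remedy for this case is not an argument: a change of $g^{(m+1)}$ at an earlier time propagates to all later times, including contact points where $|g|=N$, and time reversal does not move anything into the interior. The missing ingredient is that $\bar g_{N,k}^{m,L}$ is non-increasing in $T$ (Lemma~\ref{lem:gopt-limit}, item~\ref{it:gopt-limit:nonincreasing})). The paper uses this instead of any perturbation on $[0,T]$. Since $g(T)<N$, $g$ is continued past $T$ inside the band to some $\tilde g\in\mathcal{G}_{L,N}^m(T+\varepsilon)$ with $\tilde g^{(k)}(T+\varepsilon)>\bar g_{N,k}^{m,L}(T)\ge\bar g_{N,k}^{m,L}(T+\varepsilon)$, a contradiction. (The paper asserts this continuation without detail. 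It is clear when continuing with $\tilde g^{(m+1)}=L$ raises $g^{(k)}$, e.g.\ $k=m$, $L>0$, $|g(T)|<N$. For $k<m$, however, it depends on the sign of $g^{(k+1)}(T)$, so that point deserves care in a complete write-up.)

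For item~b), your existence argument via Lemma~\ref{lem:G:der:bounded} and Arzel\`a--Ascoli is correct and matches the paper. The finiteness part, however, rests entirely on the claim that some maximizer is a perfect spline with finitely many knots. That is a substantial theorem in its own right, and your LP/moment sketch does not deliver it: the optimality conditions can leave $g^{(m+1)}$ undetermined on whole intervals, and there maximizers need not be bang-bang. Concretely, take $m=k=1$ and $T>2\sqrt{N/L}$. The function equal to $-N$ on $[0,T-2\sqrt{N/L}]$ and to $-N+\frac{L}{2}(t-T+2\sqrt{N/L})^2$ afterwards attains $\bar g_{N,1}^{1,L}(T)=2\sqrt{NL}$ (Example~\ref{exmp:gop1}) while touching $-N$ on a whole interval. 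This refutes your claim that a constant piece at $\pm N$ would permit a local increase. Such non-uniqueness is why item~b) is only an existence statement, and the paper handles it by elementary surgery rather than structure theory:
\begin{itemize}
\item If a maximizer has infinitely many contact points, let $\bar\tau$ be their largest cluster point.
\item Iterated Rolle arguments give $|g(\bar\tau)|=N$ and $g^{(\ell)}(\bar\tau)=0$ for $\ell=1,\dots,m$, hence $\bar\tau<T$ because $g^{(k)}(T)>0$.
\item Choose $\tilde\tau\in[\bar\tau,T)$ with finitely many contacts in $[\tilde\tau,T]$, $|g(\tilde\tau)|=N$, and all $g^{(\ell)}(\tilde\tau)$ tiny.
\item Replace $g$ on $[0,\tilde\tau)$ by a spline staying strictly inside $(-N,N)$ with $(m+1)$-th derivative in $[-L,L]$; this leaves $g^{(k)}(T)$ unchanged.
\end{itemize}
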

\begin{proof}
For item~\ref{it:extremal:value}), suppose to the contrary that $g(T) < N$.
    Then, there exists $\varepsilon > 0$ and an extension $\tilde g \in \mathcal{G}_{L,N}^m(T+\varepsilon)$ of $g$ coinciding with $g$ on the interval $[0,T]$ and satisfying $\tilde g^{(k)}(T + \varepsilon) > g^{(k)}(T)$.
    This yields the contradiction $\gop_{N,k}^{m,L}(T+\varepsilon) > \gop_{N,k}^{m,L}(T)$ to the fact that $\gop_{N,k}^{m,L}$ is non-increasing with respect to $T$.

    In order to show item~\ref{it:extremal:existence}), consider a sequence $h_s \in \mathcal{G}_{L,N}^{m}(T)$ with the property that $\lim_{s\to\infty} h_s^{(k)}(T) = \gopt{k}$. The sequence of derivatives $h_s^{(m)}$ is equibounded according to \cref{lem:G:der:bounded} and has equibounded derivatives by definition of $\mathcal{G}_{L,N}^{m}(T)$. By the Arzel\`a-Ascoli theorem, then the sequence $h_s^{(m)}$ has a uniformly convergent subsequence that we name also $h_s^{(m)}$ without risk of confusion. Next, suppose that the sequence $h_s^{(i)}$ for $i\le m$, which is equibounded by \cref{lem:G:der:bounded}, has equibounded derivatives. Application of Arzel\`a-Ascoli theorem shows that such sequence has a uniformly convergent subsequence that we name also $h_s^{(i)}$. By induction, we arrive to a uniformly convergent subsequence of the original sequence satisfying $\lim_{s\to\infty} h_s^{(k)}(T) = \gopt{k}$ and having uniformly convergent derivatives up to order $m$. Therefore,  existence of at least one $g \in \mathcal{G}_{L,N}^{m}(T)$ satisfying $g^{(k)}(T) = \gopt{k}$ follows. 

    If $L = 0$, then $g$ is a nonconstant polynomial which cannot satisfy $|g(\tau)| = N$ at infinitely many time instants.
    Hence, suppose that $L > 0$.
    If $g$ satsifies $|g(\tau)| = N$ at infinitely many time instants $\tau \in [0,T]$, then these time instants have a largest cluster point $\bar \tau \in [0,T]$.
    Suppose $\lim_{\tau \to \bar\tau} g(\tau) = N$ without restriction of generality.
    Since all derivatives of $g$ up to order $m$ are continuous and uniformly bounded on $[0,T]$, they each have zeros at the stationary points of the their preceding derivative.
    By induction over the derivative order $\ell$, those zeros of $g^{(\ell)}$ are seen to cluster at $\bar \tau$ for each $\ell$.
    Thus, continuity yields $g(\bar \tau) = N$ and $g^{(\ell)}(\bar \tau) = 0$ for $\ell =1, \ldots, m$.
    As a consequence, $\bar \tau < T$ holds,  because $|g^{(k)}(T)| = \gopt{k} > 0$.
    It is thus possible to select $\tilde \tau \in [\bar \tau, T)$ such that $|g(\tau)| = N$ holds only at finitely many time instants in the interval $[\tilde \tau, T]$, such that $g(\tilde \tau) = N$, and such that $|g^{(\ell)}(\tilde \tau)|$ for $\ell = 2, \ldots, m$ (note that $g^{(1)}(\tilde \tau) = 0$ due to $g(\tilde \tau) = N$) are so small that the following construction is possible:
    Define $\tilde g(\tau) = g(\tau)$ on $[\tau_j, T]$ and continuously extend that function, maintaining continuous derivatives up to order $m$, to the time interval $[0,T]$ by means of an appropriate polynomial spline satisfying $\tilde g(\tau) \in [0,N)$ and $\tilde g^{(m+1)}(\tau) \in [-L,L]$ on $[0,\tau_j)$.
    Then, by construction, $\tilde g \in \FL^m$ is still extremal, i.e., $\tilde g^{(k)}(T) = \gopt{k}$, and it satisfies $|\tilde g(\tau)| = N$ only at finitely many time instants, which all lie in the interval $[\tilde\tau, T]$.
\end{proof}

\begin{example}
\label{exmp:gop1}
    Consider the case of first-order differentiation, i.e., $m = 1$, $k = 1$.
    Then, the following can be obtained by direct calculation:
    \begin{align}
    \label{eq:exa:barg:N11L}
    \bar g_{N,1}^{1,L}(t) &= \begin{cases}
            \frac{Lt}{2} + \frac{2N}{t} & t \in [0,2\sqrt{N/L}) \\
            2 \sqrt{NL} & t \ge 2 \sqrt{N/L},
        \end{cases} \\\displaybreak[0]
        \underline g_{N,1}^{1,L}(t) &= \begin{cases}
            Lt & t \in [0,\sqrt{2N/L}) \\
            \sqrt{2L^2 t^2 + 4 N L} - L t & t \in [\sqrt{2N/L}, 4\sqrt{N/L}) \\
            2 \sqrt{NL} & t \ge 4 \sqrt{N/L}.
        \end{cases}
    \end{align}
    for $L > 0$, and $\bar g_{N,1}^{1,0}(t) = \frac{2N}{t}$, $\underline g_{N,1}^{1,0}(t) = 0$.
\end{example}

\subsection{Lower Bounds for Causal Differentiators}

This section shows lower bounds on the worst-case error for any causal differentiator.

In particular, the following proposition is proven at the end of the subsection.
It gives general lower bounds on the worst-case differentiation error, valid for arbitrary initial condition bounds $\R \in \RR_{\ge 0}^{m+1}$, and also tighter lower bounds when restrictions on $\R$ are present.
To state it, define
    \begin{equation}
        \label{eq:ics:lowerbound}
        \bar{\g}^{m,L}_{N}(t) = \begin{bmatrix}
            N & \bar g_{N,1}^{m,L}(t) & \ldots & \bar g_{N,m}^{m,L}(t)
        \end{bmatrix}\TT
    \end{equation}
for all $L,N \ge 0$ and $t > 0$.

\begin{proposition}
\label{prop:causal:worstcase}
    Let $L,N \in \RR_{\ge 0}$, $T \in \RR_{> 0}$, $k,m\in\NN$, $k\le m$ and consider a causal differentiator $\Diff$ of order $m$.
    Then, its worst-case error satisfies
    \begin{enumerate}[a)]
        \item \label{item:prop:causal:worstcase:limit}
        $M_{N,k}^{\FLR^m}(t) \ge \lim_{\tau \to \infty} \bar g_{N,k}^{m,L}(\tau)$ for all $\R \in \RR_{\ge 0}^{m+1}$ and $t \in \RR_{\ge 0}$;
        \item \label{item:prop:causal:worstcase:general}
        $M_{N,k}^{\FLR^m}(t) \ge \bar g_{N,k}^{m,L}(t)$ for all $\R \ge \bar \g^{m,L}_{N}(\theta)$, all $t \ge \theta$ and each $\theta \in \RR_{> 0}$;
        \item \label{item:prop:causal:worstcase:everyic}
        $M_{N,k}^{\FL^m}(t) \ge \bar g_{N,k}^{m,L}(t)$ for all $t \in \RR_{> 0}$.
    \end{enumerate}
\end{proposition}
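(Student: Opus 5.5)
The common engine for all three items is an indistinguishability argument. For a given extremal $g$ we take the noise to be $\pm g$ and compare two noise-free signals whose measurements coincide. Fix $t>0$ and let $g\in\mathcal{G}_{L,N}^m(t)$. Consider $f_1 = g/2$ with noise $\eta_1 = -g/2$, and $f_2 = -g/2$ with noise $\eta_2 = g/2$. On $[0,t]$ both measurements equal the zero function, since $f_1+\eta_1 = f_2+\eta_2 = 0$. Scaling does not preserve the class, however: $g/2 \in \F_{L/2}^m$ and $|g/2|\le N/2$. So instead we use the decomposition $f_1 = g$, $\eta_1=-g$ and $f_2 = 0$, $\eta_2 = 0$. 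Both inputs are then $0$ on $[0,t]$, with $f_1\in\FL^m$, $f_2\in\FL^m$ and $|\eta_1|\le N$ on $[0,t]$. On $(t,\infty)$ we set the noise to zero, so both inputs agree there as well.

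By causality, $[\Diff_k u](t)$ is the same value $y$ for both inputs. Hence
\[
\max\{|g^{(k)}(t)-y|,|y|\}\ge g^{(k)}(t)/2 .
\]
This only gives half the bound. To get the full value $\bar g^{m,L}_{N,k}(t)$ we need a symmetric pair instead: $f_\pm = \pm g$ with noise $\mp g$ would require noise bound $2N$. So the correct pair is $f_{1}=g$ and $f_2=-g$, with the measured signals $u_1 = g-\eta_1$ and $u_2 = -g+\eta_2$. Choosing $\eta_1 = g$ and $\eta_2=-g$ gives $u_1=u_2=0$ on $[0,t]$ while $|\eta_i|\le N$. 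The outputs then coincide at $t$, and $f_1^{(k)}(t)-f_2^{(k)}(t) = 2g^{(k)}(t)$. Consequently one of the errors is at least $g^{(k)}(t)$. Taking $g$ extremal via \cref{prop:gopt:extremal} yields error $\ge \bar g_{N,k}^{m,L}(t)$ at the instant $t$, and since $M(t)$ takes the supremum over $\tau\ge t$, the same argument applied at every $\tau\ge t$ gives the bound $\sup_{\tau \ge t}\bar g(\tau)$, which is at least $\bar g(t)$.

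\textbf{Item c).} This follows directly, because $\pm g\in\FL^m$ and $\FL^m$ has no restriction on initial conditions.

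\textbf{Item b).} We need $\pm g\in\FLR^m$, that is, $|g^{(i)}(0)|\le R_i$. For $\tau\ge t\ge\theta$, the time-reversal representation \eqref{eq:gopt:equiv} shows that $|g^{(i)}(0)|$ is bounded by the reversed problem's supremum at horizon $\tau$. That quantity is $\bar g_{N,i}^{m,L}(\tau)\le \bar g_{N,i}^{m,L}(\theta)$ by monotonicity (\cref{lem:gopt-limit}), and $|g(0)|\le N$ holds trivially. The point that needs care is that $\bar g$ bounds $g^{(i)}(0)$ for functions bounded on $[0,\tau]$ by symmetry under $g\mapsto -g$ and time reversal. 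It therefore gives an upper bound on $|g^{(i)}(0)|$, so $\R\ge\bar\g_N^{m,L}(\theta)$ suffices.

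\textbf{Item a).} Here $\R$ may be small, and this is the main obstacle. For fixed $\R$ and $t$, we pick a large $\tau$ and want a function in $\mathcal{G}_{L,N}^m(\tau)\cap\F_{L,\bm 0}^m$ whose $k$-th derivative at $\tau$ is close to $\underline g_{N,k}^{m,L}(\tau)$. Such a function has zero initial conditions, so $\pm g\in\FLR^m$ for every $\R$. The argument above then gives $M_{N,k}^{\FLR^m}(t)\ge \underline g_{N,k}^{m,L}(\tau)$ for all $\tau\ge t$. Letting $\tau\to\infty$ and invoking \cref{lem:gopt-limit}\ref{it:gopt-limit:limits}) gives the limit of $\bar g$. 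Since only a supremum is needed, we use near-maximizers, and no existence result for $\underline g$ is required.
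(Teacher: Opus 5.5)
Your argument is correct and essentially the paper's. The pair $\pm g$ with noise $\mp g$ on $[0,t]$ is exactly \cref{lem:causal:worst-case-f}. Items a) and b) then go as in the paper: for a) you use $\underline g$ with zero initial conditions together with \cref{lem:gopt-limit}\ref{it:gopt-limit:limits}), and for b) the time-reversal bound on $|g^{(i)}(0)|$. You get c) directly rather than from b), which is equivalent. Before writing it up, drop the exploratory false starts and fix the sign slip: with $u_2=-g+\eta_2$ you need $\eta_2=g$, not $-g$, to get $u_2=0$ on $[0,t]$.
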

\cref{prop:causal:worstcase}\ref{item:prop:causal:worstcase:general}) shows that, for sufficiently large initial conditions, $\bar g_{N,k}^{m,L}(t)$ bounds the worst-case error from below as a function of time.
Although the lower bound on the initial condition, namely $\bar\g^{m,L}_{N}$, does not tend to zero as time tends to infinity, \cref{prop:causal:worstcase}\ref{item:prop:causal:worstcase:limit}) shows that the lower bound on the worst-case error is nonetheless true for any initial condition as $t$ tends to infinity.

In order to prove \cref{prop:causal:worstcase}, the following lemma is instrumental.
It gives a simple lower bound on the worst-case differentiation error that is related to the existence of specific bounded functions in $\FLR^m$. \begin{lemma}
\label{lem:causal:worst-case-f}
    Let $L, N, \theta \in \RR_{\ge 0}$, $m\in\NN$, $\R \in \RR_{\ge 0}^{m+1}$ and consider a causal differentiator $\Diff$ of order $m$.
    Suppose that there exists $f \in \FLR^m$ satisfying $|f(\tau)| \le N$ for all $\tau \in [0, \theta]$.
    Then, $M_{N,k}^{\FLR^m}(t) \ge |f^{(k)}(\theta)|$ holds for all $k \in \{1, \ldots, m\}$ and all $t \in [0, \theta]$.
\end{lemma}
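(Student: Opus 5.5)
The plan is the classical indistinguishability argument. The two signals $f$ and $-f$, sharing the same measurement on $[0,\theta]$, force any causal differentiator into an error of at least $|f^{(k)}(\theta)|$ at time $\theta$.

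\textbf{Admissible signals.} Given $f\in\FLR^m$ with $|f(\tau)|\le N$ on $[0,\theta]$, I note that $-f\in\FLR^m$ as well, since $\FLR^m$ is symmetric: the bounds on $|f^{(i)}(0)|$ and on $|f^{(m+1)}|$ are invariant under negation.

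\textbf{Common measurement.} I define $\eta_1(\tau)=-f(\tau)$ and $\eta_2(\tau)=f(\tau)$ for $\tau\in[0,\theta]$, and $\eta_1=\eta_2=0$ for $\tau>\theta$. Both lie in $\EN$ because $|f|\le N$ on $[0,\theta]$. The inputs are then
\[
u_1=f+\eta_1, \qquad u_2=-f+\eta_2 .
\]
Both vanish identically on $[0,\theta]$, so causality gives $[\Diff_k u_1](\theta)=[\Diff_k u_2](\theta)=:y$.

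\textbf{Error bound at $\theta$.} By the triangle inequality,
\[
2|f^{(k)}(\theta)| = \bigl|f^{(k)}(\theta)-y-\bigl((-f)^{(k)}(\theta)-y\bigr)\bigr| \le |f^{(k)}(\theta)-y|+|-f^{(k)}(\theta)-y| .
\]
Hence at least one of the two errors at time $\theta$ is at least $|f^{(k)}(\theta)|$.

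\textbf{Conclusion for $t\in[0,\theta]$.} Since $\theta\ge t$, the instant $\tau=\theta$ is included in the inner supremum $\sup_{\tau\ge t}$ of the definition of $M_{N,k}^{\FLR^m}(t)$. Therefore $M_{N,k}^{\FLR^m}(t)\ge|f^{(k)}(\theta)|$ for all $t\in[0,\theta]$ and all $k$.

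\textbf{Technical point.} The only care needed is at $\theta=0$, where $f^{(k)}(0)$ is understood as a one-sided limit. This is still the value of the derivative of $f\in\F^m$ at $0$, so the argument goes through unchanged. Beyond this, no step is a real obstacle; the main thing to check is that the noise may be an arbitrary bounded function, with no regularity required.
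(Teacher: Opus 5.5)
Your proof is correct and is essentially the paper's argument. Both add noise that cancels $f$ (respectively $-f$) on $[0,\theta]$, use causality to force a common output $y$ at time $\theta$, and conclude from $\max\{|y-f^{(k)}(\theta)|,|y+f^{(k)}(\theta)|\}\ge|f^{(k)}(\theta)|$ together with $\theta\ge t$. The only cosmetic difference is that the paper sets $\eta=N$ after $\theta$ and takes $-f-\eta$ as the second input, whereas you set the noise to zero after $\theta$; this works equally well.
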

\begin{proof}
Define $\eta \in \EN$ as $\eta(\tau) = -f(\tau)$ for $\tau \in [0, \theta]$ and as $\eta(\tau) = N$ otherwise.
Note that then $f(\tau)+\eta(\tau) = 0$ for all such $\tau \in [0, \theta]$.
Define $\gamma = [\Diff_k(f+\eta)](\theta)$.
Clearly, $M_{N,k}^{\FLR^m}(t) \ge |\gamma - f^{(k)}(\theta)|$.
From the conditions on $f$ and $\eta$, causality implies that also $[\Diff_k(-f-\eta)](\theta) = \gamma$.
Since $-f \in \FLR^m$, $-\eta \in \EN$ as well, this yields $M_{N,k}^{\FLR^m}(\theta) \ge \max\{ |\gamma - f^{(k)}(\theta)|, |\gamma+f^{(k)}(\theta)| \} \ge |f^{(k)}(\theta)|$, proving the claim.
\end{proof}    

We are now in the position to prove \cref{prop:causal:worstcase}.

\begin{proof}[Proof of \cref{prop:causal:worstcase}]
    \ref{item:prop:causal:worstcase:limit})
    As a consequence of \cref{lem:gopt-limit}\ref{it:gopt-limit:limits}), it suffices to show $M_{N,k}^{\FLR^m}(t) \ge \lim_{\tau \to \infty} \underline g_{N,k}^{m,L}(\tau)$.
    Hence, suppose for a contradiction that the relation $M_{N,k}^{\FLR^m}(t) = \beta < \lim_{\tau \to \infty} \underline g_{N,k}^{m,L}(\tau)$ holds for some $\R\in\RR_{\ge 0}^{m+1}$ and $t\in\RRge$.
    Then, there exists a sufficiently large $\theta \ge t$ and an $f \in \mathcal{G}_{L,N}^{m}(\theta) \cap \F_{L,\bm{0}}^m \subseteq \FLR^m$ with $f^{(k)}(\theta) > \beta$.
    Lemma~\ref{lem:causal:worst-case-f} then yields the contradiction $M_{N,k}^{\FLR^m}(t) \ge |f^{(k)}(\theta)| > \beta$.

\ref{item:prop:causal:worstcase:general}) Let $\bar \R(t) = \bar\g^{m,L}_{N}(t)$.
It is sufficient to show $M_{N,k}^{\F_{L,\bar \R(t)}^m}(t) \ge \bar g_{N,k}^{m,L}(t)$ for all $t > 0$.
Suppose to the contrary that $M_{N,k}^{\F_{L,\bar \R(t)}^m}(t) = \beta < \bar g_{N,k}^{m,L}(t)$ for some $t>0$.
Consider any $f \in \mathcal{G}_{L,N}^{m}(t)$ with $f^{(k)}(t) > \beta$.
Note that $f \in \F_{L,\bar \R(t)}^m$, because \eqref{eq:def:gopt} is symmetric with respect to time reversal, and hence $|f^{(i)}(0)| \le \bar g_{N,i}^{m,L}(t)$ for $i=1, \ldots, m$ in addition to $|f(0)| \le N$.
Since $|f(\tau)| \le N$ for all $\tau \in [0, t]$, Lemma~\ref{lem:causal:worst-case-f} then yields the contradiction $M_{N,k}^{\F_{L,\bar \R(t)}^m} \ge |f^{(k)}(t)| > \beta$.

\ref{item:prop:causal:worstcase:everyic}) This follows from $M_{N,k}^{\FL^m}(t) \ge M_{N,k}^{\FLR^m}(t)$ for every $\R\in\RRge^{m+1}$ and \cref{item:prop:causal:worstcase:general}).
\end{proof}

\subsection{Lower Bounds for Exact Differentiators}

Exact differentiators produce an output that equals the real derivative of the input signal in the absence of measurement noise, provided that some higher-order derivative bound is known to the differentiator.
This subsection provides lower bounds for the worst-case error of such differentiators.

In doing so, one may distinguish the class of signals over which the differentiator is exact, denoted by $\F_{M}^m$ in the following, and the class $\FL^m$ over which the worst-case error is considered.
In particular, the following proposition will be proven at the end of the subsection.
\begin{proposition}
\label{prop:exact:worstcase}
    Let $L,M,N \in \RR_{\ge 0}$, $T \in \RR_{> 0}$, $k,m \in \NN$, $k\le m$ and consider a causal differentiator $\Diff$ of order $m$.
    Suppose that $\Diff$ is exact in finite time over $\F_{M}^m$ and recall \eqref{eq:ics:lowerbound}.
    Then, its worst-case error satisfies
    \begin{enumerate}[a)]
        \item \label{item:prop:exact:worstcase:limit}
        $M_{N,k}^{\FLR^m}(t) \ge \lim_{\tau \to \infty} \bar g_{N,k}^{m,L+M}(\tau)$ for all $\R \in \RR_{\ge 0}^{m+1}$ and $t \in \RR_{\ge 0}$.
\end{enumerate}

    If $\Diff$ is additionally exact from the beginning over $\F_M^m$, then
    \begin{enumerate}[a)]\setcounter{enumi}{1}
        \item \label{item:prop:exact:worstcase:general}
        $M_{N,k}^{\FLR^m}(t) \ge \bar g_{N,k}^{m,L+M}(t)$ for all $\R \ge \bar \g^{m,L+M}_{N}(\theta)$ and all $t \ge \theta$, for each $\theta \in \RR_{> 0}$;
        \item \label{item:prop:exact:worstcase:beginning}
        $M_{N,k}^{\FL^m}(t) \ge \bar g_{N,k}^{m,L+M}(t)$ for all $t \in \RR_{> 0}$. \end{enumerate}
\end{proposition}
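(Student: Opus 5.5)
The plan is to prove an analogue of \cref{lem:causal:worst-case-f} in which the second ``indistinguishable'' signal comes from the exactness class $\F_M^m$ instead of from $-f$. Items \ref{item:prop:exact:worstcase:limit})--\ref{item:prop:exact:worstcase:beginning}) then follow exactly as in the proof of \cref{prop:causal:worstcase}, with $L$ replaced by $L+M$.

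\emph{Key lemma.} Let $f \in \F_{L+M}^m$ with $|f(\tau)| \le N$ on $[0,\theta]$, $\theta > 0$, and suppose $L+M>0$ (otherwise $\bar g^{m,0}_{N,k}$ appears only through limits that vanish, or $f$ is a polynomial and the claim is trivial).
\begin{itemize}
\item Split $f = f_1 + f_2$ with $f_1 = \frac{L}{L+M} f \in \FL^m$ and $f_2 = \frac{M}{L+M} f \in \F_M^m$.
\item Take the input $u = f_1 + \eta$ with $\eta = -f$ on $[0,\theta]$ and $\eta = N$ afterwards. Then $\eta \in \EN$ and $u = -f_2$ on $[0,\theta]$.
\item Suppose the noise-free signal $-f_2$ is differentiated exactly at time $\theta$, i.e.\ $[\Diff_k(-f_2)](\theta) = -f_2^{(k)}(\theta)$. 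Causality then gives $[\Diff_k u](\theta) = -f_2^{(k)}(\theta)$.
\item Hence the error for the signal $f_1$ at time $\theta$ is $|f_1^{(k)}(\theta)+f_2^{(k)}(\theta)| = |f^{(k)}(\theta)|$.
\item Since $|f_1^{(i)}(0)| \le |f^{(i)}(0)|$, we have $f_1 \in \FLR^m$ whenever $f \in \F_{L+M,\R}^m$.
\end{itemize}
This yields $M_{N,k}^{\FLR^m}(t) \ge |f^{(k)}(\theta)|$ for all $t \le \theta$. It holds provided exactness applies to $-f_2$ at time $\theta$. Under exactness from the beginning, this is automatic for every $\theta>0$.

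\emph{Item \ref{item:prop:exact:worstcase:limit}).} Here only finite-time exactness is available, so I would use the zero-initial-condition functions.
\begin{itemize}
\item For $f \in \mathcal{G}_{L+M,N}^m(\theta) \cap \F_{L+M,\bm{0}}^m$, the component $f_2$ lies in $\F_{M,\bm{0}}^m$.
\item Exactness in finite time gives a single time $t_{\bm 0}$ after which all of $\F_{M,\bm 0}^m$ is differentiated exactly.
\item Suppose $M_{N,k}^{\FLR^m}(t) = \beta < \lim_{\tau\to\infty}\underline g_{N,k}^{m,L+M}(\tau)$.
\item By \cref{lem:gopt-limit}\ref{it:gopt-limit:nondecreasing}), pick $\theta \ge \max\{t, t_{\bm 0}\}$ and such an $f$ with $f^{(k)}(\theta) > \beta$.
\item Its component $f_1$ has zero initial conditions, so $f_1 \in \FLR^m$ for every $\R$. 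The key lemma gives a contradiction.
\item \cref{lem:gopt-limit}\ref{it:gopt-limit:limits}) converts the $\underline g$ limit into the $\bar g$ limit.
\end{itemize}

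\emph{Items \ref{item:prop:exact:worstcase:general}) and \ref{item:prop:exact:worstcase:beginning}).} I would copy the proof of \cref{prop:causal:worstcase}\ref{item:prop:causal:worstcase:general}).
\begin{itemize}
\item Take $f \in \mathcal{G}_{L+M,N}^m(t)$ with $f^{(k)}(t) > \beta$.
\item By the time-reversal representation \eqref{eq:gopt:equiv}, $|f^{(i)}(0)| \le \bar g^{m,L+M}_{N,i}(t) \le \bar g^{m,L+M}_{N,i}(\theta)$ for $t \ge \theta$, using \cref{lem:gopt-limit}\ref{it:gopt-limit:nonincreasing}).
\item Therefore $f_1 \in \FLR^m$ for $\R \ge \bar\g^{m,L+M}_N(\theta)$, and exactness from the beginning handles $-f_2$ at time $t>0$.
\item Item \ref{item:prop:exact:worstcase:beginning}) follows by taking the supremum over $\R$.
\end{itemize}

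\emph{Main obstacle.} The delicate step is item \ref{item:prop:exact:worstcase:limit}). Finite-time exactness is only uniform over bounded initial conditions, so I must guarantee that the $\F_M^m$ component has initial conditions in a fixed bounded set (zero here) independent of $\theta$. That is why I route the argument through $\underline g$ and \cref{lem:gopt-limit}\ref{it:gopt-limit:limits}) rather than through $\bar g$ directly.
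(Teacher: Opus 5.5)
Your proposal is correct and follows essentially the same route as the paper. Your key lemma is \cref{lem:exact:worst-case-f} up to the sign change $g=-f$: you split the signal into an $\F_L^m$ part and an $\F_M^m$ part, and the latter is differentiated exactly. Items a)--c) then go through exactly as in the paper: via $\underline g$ with zero initial conditions and \cref{lem:gopt-limit} for a), and via time reversal for b) and c). Your explicit choice $\theta \ge \max\{t, t_{\bm{0}}\}$ just makes precise what the paper calls a ``sufficiently large $\theta$''.

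One remark is too loose. The degenerate case $L=M=0$ is not trivial for items b) and c), since $\bar g_{N,k}^{m,0}(t)>0$ for $N>0$. It does follow from \cref{lem:causal:worst-case-f}, as in the paper, or from your own argument with the split $f_1=f$, $f_2=0$.
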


For proving \cref{prop:exact:worstcase}, the following lemma is instrumental.
Analogously to \cref{lem:causal:worst-case-f}, it provides a simple lower bound on the worst-case differentiation error related to specific bounded functions but in the case when exact differentiation is possible.
\begin{lemma}
\label{lem:exact:worst-case-f}
    Let $L, M, N \in \RR_{\ge 0}$, $\theta \in \RRgt$, $m\in\NN$, $\R \in \RR_{\ge 0}^{m+1}$ and consider a causal differentiator $\Diff$.
    Suppose $\M_{0}^{\F_{M,\R}^m}(\theta) = \bm{0}$ and that there exists $g \in \F_{L+M,\R}^m$ satisfying $|g(\tau)| \le N$  for all $\tau \in [0, \theta]$.
    Then, $M_{N,k}^{\FLR^m}(t) \ge |g^{(k)}(\theta)|$ for all $k \in \{1, \ldots, m\}$ and all $t \in [0, \theta]$.
\end{lemma}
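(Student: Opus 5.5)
The plan is to split $g$ into a part $f$ that is admissible for the worst-case error over $\FLR^m$ and a part $h$ on which $\Diff$ is known to be exact. Causality and exactness then force the differentiator output at time $\theta$ to be off by exactly $g^{(k)}(\theta)$ for the signal $f$.

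\textbf{Decomposition.} If $L+M=0$, take $f=0$ and $h=g$. Then $f\in\F_{L,\R}^m$ and $h\in\F_{M,\R}^m=\F_{L+M,\R}^m$.

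If $L+M>0$, set
\[
f=\tfrac{L}{L+M}\,g,\qquad h=\tfrac{M}{L+M}\,g .
\]
Both are in $\F^m$, since scaling preserves $m$-fold continuous differentiability and Lipschitz continuity of the $m$-th derivative. Moreover,
\[
|f^{(m+1)}|\le \tfrac{L}{L+M}(L+M)=L \ \text{a.e.}, \qquad |h^{(m+1)}|\le M \ \text{a.e.}
\]
Because both weights lie in $[0,1]$, we also get $|f^{(i)}(0)|\le |g^{(i)}(0)|\le R_i$, and likewise for $h$. Hence $f\in\FLR^m$ and $h\in\F_{M,\R}^m$. By construction $f+h=g$ in both cases.

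\textbf{Noise and input.} Define $\eta(\tau)=-g(\tau)$ for $\tau\in[0,\theta]$ and $\eta(\tau)=0$ for $\tau>\theta$. Since $|g|\le N$ on $[0,\theta]$, we have $\eta\in\EN$. The input $u=f+\eta$ then satisfies $u(\tau)=f(\tau)-g(\tau)=-h(\tau)$ for all $\tau\in[0,\theta]$.

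\textbf{Applying causality and exactness.} Note that $-h\in\F_{M,\R}^m$, because the class is symmetric under negation. By causality, $[\Diff_k u](\theta)=[\Diff_k(-h)](\theta)$. The hypothesis $\M_0^{\F_{M,\R}^m}(\theta)=\zero$ gives $[\Diff_k(-h)](\theta)=-h^{(k)}(\theta)$, because the supremum in the definition of the worst-case error includes $\tau=\theta$. Therefore
\[
\bigl|f^{(k)}(\theta)-[\Diff_k u](\theta)\bigr|=\bigl|f^{(k)}(\theta)+h^{(k)}(\theta)\bigr|=|g^{(k)}(\theta)|.
\]

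\textbf{Conclusion.} Since $u=f+\eta$ with $f\in\FLR^m$ and $\eta\in\EN$, this pair is admissible in the supremum defining $M_{N,k}^{\FLR^m}(t)$. For any $t\in[0,\theta]$ the inner supremum over $\tau\ge t$ includes $\tau=\theta$, which yields $M_{N,k}^{\FLR^m}(t)\ge|g^{(k)}(\theta)|$.

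The only delicate step is the decomposition. One must keep the initial-condition bounds $\R$ for both pieces while splitting the $(m+1)$-th derivative bound $L+M$ into $L$ and $M$. Taking convex multiples of $g$ handles both requirements at once. Unlike in \cref{lem:causal:worst-case-f}, no symmetrization argument with $-f$ is needed.
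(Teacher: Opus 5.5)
Your proof is correct and is essentially the paper's argument. The paper writes $g$ as the sum of $-f$, where $f=-\frac{L}{L+M}g\in\FLR^m$, and an exactly-differentiated part $\frac{M}{L+M}g\in\F_{M,\R}^m$, with noise $\eta=g$ on $[0,\theta]$; this is your construction up to an overall sign. The remaining differences are cosmetic: the paper handles $L=M=0$ by citing \cref{lem:causal:worst-case-f} where you use the exactness hypothesis uniformly, and it sets $\eta=N$ rather than $0$ after $\theta$.
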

\begin{proof}
    If $L=M=0$, the result follows from \cref{lem:causal:worst-case-f}. Otherwise, let $f = - \frac{L}{L+M} g \in \FLR^m$ and define $\eta(\tau) = g(\tau)$ for $\tau \in [0, \theta]$ and $\eta(\tau) = N$ otherwise.
    Clearly, $\eta \in \EN$ and $f(\tau)+\eta(\tau) = \frac{M}{L+M} g(\tau)$ holds for all $\tau \in [0, \theta]$.
    Since $\Diff$ is causal, $[\Diff_k(f+\eta)](\theta) = [\Diff_k(\frac{M}{L+M} g )](\theta) = \frac{M}{L+M} g^{(k)}(\theta)$ is then obtained, because $\frac{M}{L+M} g \in \F_{M,\R}^m$ and $\M_{0}^{\F_{M,\R}^m}(\theta) = \bm{0}$, i.e., $\Diff$ differentiates this input exactly at and after time $\theta$.
    Consequently, $M_{N,k}^{\FLR^m}(t) \ge |f^{(k)}(\theta) - [\Diff_k(f+\eta)](\theta)| = |g^{(k)}(\theta)|$.
\end{proof}

We can now prove \cref{prop:exact:worstcase}.

\begin{proof}[Proof of \cref{prop:exact:worstcase}]
    \ref{item:prop:exact:worstcase:limit})
    As a consequence of \cref{lem:gopt-limit}\ref{it:gopt-limit:limits}), it suffices to show $M_{N,k}^{\FLR^m}(t) \ge \lim_{\tau \to \infty} \underline g_{N,k}^{m,L+M}(\tau)$.
    Hence, assume to the contrary that the relation $M_{N,k}^{\FLR^m}(t) = \beta < \lim_{\tau \to \infty} \underline g_{N,k}^{m,L+M}(\tau)$ holds for some $\R \in \RR_{\ge 0}^{m+1}$ and $t\in\RRge$.
    Then, there exists a sufficiently large $\theta \ge t$ and a $g \in \mathcal{G}_{L+M,N}^{n}(\theta) \cap \F_{L+M,\bm{0}}^m \subseteq \F_{L+M,\R}^m$ with $g^{(k)}(\theta) > \beta$.
    The contradiction $M_{N,k}^{\FLR^m}(t) \ge |g^{(k)}(\theta)| > \beta$ is then obtained by applying \cref{lem:exact:worst-case-f}. 

    \ref{item:prop:exact:worstcase:general})
    Let $\bar \R(t) = \bar\g^{m,L+M}_{N}(t)$. 
It is sufficient to show $M_{N,k}^{\F_{L,\bar \R(t)}^m}(t) \ge \bar g_{N,k}^{m,L+M}(t)$ for all $t > 0$.
Suppose to the contrary that $M_{N,k}^{\F_{L,\bar\R(t)}^m}(t) = \beta < \bar g_{N,k}^{m,L+M}(t)$.
Consider any $g \in \mathcal{G}_{L+M,N}^{m}(t)$ with $g^{(k)}(t) > \beta$.
Note that $g \in \F_{L+M,\bar \R(t)}^m$, because \eqref{eq:def:gopt} is symmetric with respect to time reversal.
Hence, $|g^{(i)}(0)| \le \bar g_{N,i}^{m,L+M}(t)$ holds for $i=1, \ldots, m$ in addition to $|g(0)| \le N$.
Since $|g(\tau)| \le N$ for all $\tau \in [0, t]$, \cref{lem:exact:worst-case-f} then yields the contradiction $M_{N,k}^{\F_{L,\bar\R(t)}^m} \ge |g^{(k)}(t)| > \beta$.

    \ref{item:prop:exact:worstcase:beginning}) 
    This follows from $M_{N,k}^{\FL^m}(t) \ge M_{N,k}^{\FLR^m}(t)$ for all $\R \in \RRge^{m+1}$ and \cref{item:prop:exact:worstcase:general}).
\end{proof}

\section{Deterministically Optimal Differentiation}
\label{sec:opt-diff}

The previous section provided lower bounds on the worst-case differentiation error for causal and exact differentiators. This section addresses suitable definitions for best (i.e. lowest) worst-case error and consequently of optimal differentiators in a deterministic sense, as well as the existence of such types of differentiators. 

\subsection{Definition and Properties of Optimal Differentiators}

\subsubsection{Best Worst-Case Error of Causal Differentiators}

As a first step towards defining deterministically optimal differentiators, \cref{prop:causal:worstcase} motivates the following definition of a causal differentiator having best worst-case error with respect to a given noise bound $N$.
\begin{defin}
\label{def:optimal:causal}
    Let $L,N\in\RRge$, $m\in\NN$. A causal differentiator $\Diff$ of order $m$ is said to have best worst-case error over $\FL^m$ with respect to noise in $\EN$, if its worst-case error satisfies
    $M_{N,k}^{\FLR^m}(t) \le \bar g_{N,k}^{m,L}(t)$ for all $\R \in \RR_{\ge 0}^{m+1}$, $t \in \RR_{> 0}$, and $k\in\NN$ satisfying $k\le m$.
\end{defin}

Note that, while not considered here explicitly, it is also possible to define the best-worst case error property for certain selected, individual derivative orders.
The following results will also be useful in that case.

The following proposition shows some important properties that all differentiators with best worst-case error necessarily have.
It will be proven at the end of the subsection, after two auxiliary lemmata.

\begin{proposition}
\label{prop:bestworstcase:properties}
    Let $L, N, L', N' \in \RR_{\ge 0}$, $m\in\NN$ and consider a causal differentiator $\Diff$ of order $m$.
    Suppose that $\Diff$ has best worst-case error over $\FL^m$ with respect to noise in $\EN$.
    Then,
    \begin{enumerate}[a)]
    \item
    \label{it:bestworstcase:homogeneity}
        $\Diff$ has best worst-case error over $\F_{\alpha L}^m$ with respect to $\E_{\alpha N}$ for all $\alpha \in [0,1]$;
        \item 
        \label{it:bestworstcase:exact0}
        $\Diff$ is exact from the beginning over $\F_0^m$;
\item
        \label{it:bestworstcase:robustness}
        $\Diff$ is robust almost from the beginning over $\F_0^m$, if $N > 0$;
        \item 
        \label{it:bestworstcase:exactnecessary}
        $\Diff$ is not exact over $\FL^m$, if $N >0$ and $L > 0$;
        \item 
        \label{it:bestworstcase:parameter}
        $\Diff$ can have best worst-case error over $\F_{L'}^m$ with respect to $\E_{N'}$ only if $L N' = L' N$.
    \end{enumerate}
\end{proposition}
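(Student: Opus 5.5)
Item~\ref{it:bestworstcase:homogeneity}) follows from \eqref{eq:gopt-homogeneity} and monotonicity of worst-case errors. Since $\F_{\alpha L,\R}^m\subseteq\FLR^m$ and $\E_{\alpha N}\subseteq\EN$, we get $M_{\alpha N,k}^{\F_{\alpha L,\R}^m}(t)\le M_{N,k}^{\FLR^m}(t)\le \bar g_{N,k}^{m,L}(t)$. This bound alone is too weak, since we need $\alpha\bar g_{N,k}^{m,L}(t)$ on the right. I would therefore use linearity of the classes under scaling. For $\alpha>0$, any $u=f+\eta$ with $f\in\F_{\alpha L,\R}^m$, $\eta\in\E_{\alpha N}$ equals $\alpha(\tilde f+\tilde\eta)$ with $\tilde f\in\F_{L,\R/\alpha}^m$ and $\tilde\eta\in\EN$. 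The difficulty is that $\Diff$ need not be homogeneous, so this rescaling does not directly transfer the bound.

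I therefore expect to use a different mechanism: the best worst-case error forces outputs on inputs whose noise can be ``explained'' in several ways. My plan is to first prove \ref{it:bestworstcase:exact0}) and then use a convex-combination argument for \ref{it:bestworstcase:homogeneity}). Write the input as $u=\alpha(f/\alpha)+\ldots$; more precisely, take $f\in\F_{\alpha L}^m$ and $\eta\in\E_{\alpha N}$. By \cref{lem:gopt-limit}, $\bar g_{N,k}^{m,L}(t)\to$ a limit as $t\to\infty$. If $\alpha=0$, exactness on polynomials (item b) gives zero error, matching $\bar g^{m,0}_{0,k}=0$.

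Item~\ref{it:bestworstcase:exact0}) follows from the definition with $L=0$-type signals. Polynomials $p\in\F_0^m$ lie in $\FL^m$ with $\R$ fixed. Use the noise $\eta=0$ and the perturbation $u=p+\eta$, where the admissible pairs $(f,\eta)$ include $(p+q,-q)$ for every $q\in\FL^m$ with $|q|\le N$. Thus $|p^{(k)}(t)+q^{(k)}(t)-[\Diff_k u](t)|\le\bar g^{m,L}_{N,k}(t)$ for all such $q$. Taking $q$ and $-q$, both extremal with $q^{(k)}(t)=\bar g_{N,k}^{m,L}(t)$ (attained by \cref{prop:gopt:extremal}), forces $[\Diff_k u](t)=p^{(k)}(t)$. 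This gives exactness from the beginning on $\F_0^m$ even for noisy inputs $p$, and the same two-sided squeezing argument handles general inputs. Item~\ref{it:bestworstcase:robustness}) then follows similarly. For $u_1=p$ and $u_2=p+\eta$ with $|\eta|\le\epsilon$, view $u_2$ as $p$ plus noise in $\EN$ plus slack. The squeezing argument pins $\Diff u_2$ within $\bar g^{m,L}_{N,k}(t)-\bar g^{m,L}_{N-\epsilon,k}(t)$-type gaps, which vanish as $\epsilon\to0$ by \eqref{eq:gopt-addnoise} and \eqref{eq:gopt-scaling} with continuity in $N$.

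Item~\ref{it:bestworstcase:exactnecessary}) follows by contradiction with \cref{prop:exact:worstcase}\ref{item:prop:exact:worstcase:limit}) taking $M=L$. Exactness would give $M_{N,k}\ge\lim\bar g^{m,2L}_{N,k}>\lim\bar g^{m,L}_{N,k}$, where the strict inequality comes from scaling \eqref{eq:lim:barg:scaled}, since the limit scales like $L^{k/(m+1)}$. For large $t$, this contradicts the best worst-case bound. Item~\ref{it:bestworstcase:parameter}) uses item~\ref{it:bestworstcase:homogeneity}) together with the strict concavity of \cref{lem:gopt:concavelimit}. If both parameter pairs were optimal but $LN'\ne L'N$, rescale them to a common $L$ via a) and obtain optimality for two distinct noise levels $N_1\ne N_2$. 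The squeezing identity from b) then gives outputs whose limiting error is affine in $N$, contradicting the strict concavity. I expect this last step, together with making a) rigorous, to be the main obstacle.
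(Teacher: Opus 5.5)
Your proposal leaves item~\ref{it:bestworstcase:homogeneity}) unproved, and it is the load-bearing item: the paper derives b), c) and e) from it. You correctly note that $\Diff$ need not be homogeneous. However, the remarks that follow (writing $u=\alpha(f/\alpha)+\ldots$, the limit of $\gop_{N,k}^{m,L}$ as $t\to\infty$) give no argument for $\alpha\in(0,1)$.

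The missing idea is already in your proof of b): perturb the signal by a term from the \emph{complementary} class.
\begin{itemize}
\item Given $u=f_1+\eta_1$ with $f_1\in\F_{\alpha L}^m$, $\eta_1\in\E_{\alpha N}$ and $t>0$, take $q\in\mathcal{G}_{(1-\alpha)L,(1-\alpha)N}^m(t)$. Then $f_1+q\in\FL^m$, and $\eta_1-q$ on $[0,t]$, extended by $0$ afterwards, lies in $\EN$.
\item By causality $[\Diff_k u](t)$ is unchanged by this truncation. You also need this truncation step in your b) and c), since $q$ is only bounded on $[0,t]$.
\item The best worst-case bound gives $|f_1^{(k)}(t)+q^{(k)}(t)-[\Diff_k u](t)|\le\gop_{N,k}^{m,L}(t)$.
\item By \eqref{eq:gopt-homogeneity}, $\gop_{(1-\alpha)N,k}^{m,(1-\alpha)L}(t)=(1-\alpha)\gop_{N,k}^{m,L}(t)$. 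Using $\pm q$ with $q^{(k)}(t)$ arbitrarily close to this value yields $|[\Diff_k u](t)-f_1^{(k)}(t)|\le\alpha\gop_{N,k}^{m,L}(t)=\gop_{\alpha N,k}^{m,\alpha L}(t)$.
\end{itemize}
This is exactly \cref{lem:worstcase:additive} as the paper uses it. With that in place, your b) is the case $\alpha=0$. Your c) bounds the squeeze gap by $\gop_{N,k}^{m,L}-\gop_{N-\epsilon,k}^{m,L}\le\gop_{\epsilon,k}^{m,L/2}\to0$ via \eqref{eq:gopt-addnoise} and \eqref{eq:gopt-scaling}; this is a valid alternative to the paper's limit $\alpha\to0^+$. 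Your d) coincides with the paper.

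Item e) inherits this gap and is otherwise only a sketch. ``Outputs whose limiting error is affine in $N$'' is not an argument. What is needed is one input that is admissible for both noise levels with opposite underlying signals. For $g\in\mathcal{G}_{L,N}^m(T)$ near-extremal with $N=(N_1+N_2)/2$, the input $\frac{N_2-N_1}{N_2+N_1}g$ is within $N_1$ of $g$ and within $N_2$ of $-g$ on $[0,T]$. This forces $2g^{(k)}(T)\le\gop_{N_1,k}^{m,L}(T)+\gop_{N_2,k}^{m,L}(T)$, which contradicts \cref{lem:gopt:concavelimit} for large $T$.

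Moreover, ``rescale to a common $L$'' fails when $L'=0<L$ and $N'>0$ (or symmetrically). Item a) only shrinks $L$, and shrinking $(L,N)$ to $(0,0)$ produces no contradiction (cf.\ \cref{rem:optimalL0}). This case needs a separate comparison. Choose $\alpha\in(0,1]$ with $\alpha N<N'$, and view a near-extremal $g\in\mathcal{G}_{\alpha L,N'}^m(T)$ both as a noise-free signal in $\F_{\alpha L}^m$ and as pure noise on the zero polynomial. This gives $\gop_{N',k}^{m,\alpha L}(T)-\gop_{\alpha N,k}^{m,\alpha L}(T)\le\gop_{N',k}^{m,0}(T)$ up to an arbitrarily small term. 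The right-hand side tends to $0$, while the left-hand side has a positive limit.
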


\begin{remark}
\label{rem:bestworstcase:exactness}
    Regarding item~\ref{it:bestworstcase:exactnecessary}), note that, as an immediate consequence of Definition~\ref{def:optimal:causal}, $\Diff$ indeed is exact from the beginning over $\FL^m$ if and only if it has best worst-case error over $\FL^m$ with respect to noise in $\E_0$.
\end{remark}
\begin{remark}\label{rem:NLratio}
Item~\ref{it:bestworstcase:parameter}) essentially means that the ratio $N/L$ has to be known to any differentiator that has best worst-case error over $\FL^m$ with respect to noise in $\EN$.
\end{remark}

In order to prove the properties of differentiators with best worst-case error in \cref{prop:bestworstcase:properties}, two auxiliary lemmata are now proven.
These lemmata allow to relate the differentiation error and the best worst-case error property, respectively, of a given differentiator between different values of $L$ and $N$.

\begin{lemma}
\label{lem:worstcase:additive}
    Let $L_1, L_2, N_1, N_2, \theta, c \in \RR_{\ge 0}$, $k,m \in \NN$, $k\le m$, and consider a causal differentiator $\Diff$ of order not less than $k$.
    Suppose that $f_1 \in \F_{L_1}^m$, $\eta_1 \in \E_{N_1}$ exist such that $|[\Diff_k(f_1+\eta_1)](\theta) - f_1^{(k)}(\theta)| > c$.
    Then, $f\in \F_{L_1+L_2}^m$, $\eta \in \E_{N_1+N_2}$ exist such that $|[\Diff_k(f+\eta)](\theta) - f^{(k)}(\theta)| > c + \gop_{N_2,k}^{m,L_2}(\theta)$.
\end{lemma}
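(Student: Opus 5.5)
We construct $f$ and $\eta$ by superposing on the given pair $(f_1,\eta_1)$ a signal $g$ that is invisible to the differentiator on $[0,\theta]$. We choose $g$ with bounded amplitude, so that it can be cancelled by noise, and with large $k$-th derivative at $\theta$. The template is the cancellation trick in the proofs of \cref{lem:causal:worst-case-f} and \cref{lem:exact:worst-case-f}.

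If $\theta=0$ (or $\gop_{N_2,k}^{m,L_2}(\theta)$ is degenerate), we take $f=f_1$, $\eta=\eta_1$, noting $\E_{N_1}\subseteq\E_{N_1+N_2}$ and $\F_{L_1}^m\subseteq\F_{L_1+L_2}^m$. Otherwise, for $\theta>0$, we set $s=\sign\big(f_1^{(k)}(\theta)-[\Diff_k(f_1+\eta_1)](\theta)\big)$, with $s=1$ if this difference vanishes (impossible here since $c\ge0$ and the inequality is strict). Since the inequality $|\cdot|>c$ is strict, there is slack $\delta>0$ with $|[\Diff_k(f_1+\eta_1)](\theta)-f_1^{(k)}(\theta)|>c+\delta$. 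By definition of $\gop_{N_2,k}^{m,L_2}(\theta)$ as a supremum, we pick $g\in\mathcal{G}_{L_2,N_2}^m(\theta)$ with $g^{(k)}(\theta)>\gop_{N_2,k}^{m,L_2}(\theta)-\delta$. Alternatively, \cref{prop:gopt:extremal}\ref{it:extremal:existence}) gives a maximizer directly when $N_2>0$; if $N_2=0$ then $\gop=0$ and the trivial choice works.

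We then define $f=f_1+s g\in\F_{L_1+L_2}^m$, using linearity and the triangle inequality on the a.e.\ bound of the $(m+1)$-th derivative. For the noise we set $\eta(\tau)=\eta_1(\tau)-s g(\tau)$ for $\tau\in[0,\theta]$ and $\eta(\tau)=\eta_1(\tau)$ for $\tau>\theta$. Since $|g|\le N_2$ on $[0,\theta]$, this gives $\eta\in\E_{N_1+N_2}$. On $[0,\theta]$ we have $f+\eta=f_1+\eta_1$, so causality gives $[\Diff_k(f+\eta)](\theta)=[\Diff_k(f_1+\eta_1)](\theta)$. Hence
\[
\big|f^{(k)}(\theta)-[\Diff_k(f+\eta)](\theta)\big| = \big|f_1^{(k)}(\theta)-[\Diff_k(f_1+\eta_1)](\theta)+s\,g^{(k)}(\theta)\big|.
\]
The sign $s$ makes the two terms add in absolute value, provided $g^{(k)}(\theta)\ge0$. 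Since $-g$ is also admissible and $\gop\ge0$, we may take $g^{(k)}(\theta)\ge0$ without loss of generality. The right-hand side then exceeds $c+\delta+\gop_{N_2,k}^{m,L_2}(\theta)-\delta$, which is the claim.

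The main obstacle is bookkeeping rather than substance. We must handle the strictness of the inequality when the supremum $\gop$ is possibly not attained; the slack $\delta$ handles this, or else the attainment result of \cref{prop:gopt:extremal}. We must also check that $\eta$ remains in $\E_{N_1+N_2}$ for all $t\ge0$, which holds because outside $[0,\theta]$ it equals $\eta_1$.
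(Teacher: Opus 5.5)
Your proposal is correct and is essentially the paper's own argument: add a near-extremal $g\in\mathcal{G}_{L_2,N_2}^m(\theta)$ whose sign reinforces the existing error, and cancel it with the noise on $[0,\theta]$ so that causality leaves $[\Diff_k(f+\eta)](\theta)$ unchanged. A slack $\delta$ (the paper's $\varepsilon$) absorbs non-attainment of the supremum, and your sign convention and your choice $\eta=\eta_1$ rather than $0$ after $\theta$ are cosmetic differences. The one blemish is your aside on $\theta=0$: $\gop_{N_2,k}^{m,L_2}$ is defined only for positive times, and its natural extension to $\theta=0$ would be $+\infty$, so taking $f=f_1$ there does not prove the claim; this is harmless, since the paper's proof also tacitly assumes $\theta>0$ and the lemma is only ever applied with $\theta>0$.
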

\begin{proof}
Let $s = \sign( [\Diff_k(f_1+\eta_1)](\theta) - f_1^{(k)}(\theta) )$, choose $\varepsilon > 0$ such that $|[\Diff_k(f_1+\eta_1)](\theta) - f_1^{(k)}(\theta))| \ge (c + \varepsilon)$, and pick a $g \in \mathcal{G}_{L_2,N_2}^{m}(\theta)$ which satisfies $|g^{(k)}(\theta)| > \gop_{N_2,k}^{m,L_2}(\theta) - \varepsilon$ and $\sign(g^{(k)}(\theta)) = s$.
    Define $f = f_1 - g \in \F_{L_1+L_2}^n$, and $\eta \in \E_{N_1+N_2}$ as $\eta(t) = 0$ for $t > \theta$ and $\eta(t) = \eta_{1}(t) + g(t)$  otherwise.
    Since $f(t)+\eta(t) = f_{1}(t) + \eta_{1}(t)$ for $t \in [0,\theta]$, causality then yields
    \begin{equation}
        [\Diff_{k}(f+\eta) ](\theta) s \ge f_1^{(k)}(\theta)s +c + \varepsilon = (f^{(k)}(\theta) + g^{(k)}(\theta))s + c + \varepsilon .
    \end{equation}
    Subtracting $s f^{(k)}(\theta)$ and recalling the signs,
    \begin{equation}
        |[\Diff_k(f+\eta)](\theta) - f^{(k)}(\theta)| \ge |g^{(k)}(\theta)| + c + \varepsilon > c + \gop_{N_2,k}^{m,L_2}(\theta)
    \end{equation}
    completing the proof.
\end{proof}

\begin{lemma}
\label{prop:optimal:causal:uniqueness}
    Let $L \in \RR_{> 0}$, $N_1, N_2, N_3 \in \RR_{\ge 0}$, $N_1 < N_2 < N_3$, $m\in\NN$ and consider a causal differentiator $\Diff$ of order $m$.
    Then, at most one of the following statements can be true:
    \begin{enumerate}[a)]
        \item \label{item:bestLN1}
        $\Diff$ has best worst-case error over $\FL^m$ with respect to noise in $\E_{N_1}$
        \item \label{item:bestLN2}
        $\Diff$ has best worst-case error over $\FL^m$ with respect to noise in $\E_{N_2}$
        \item \label{item:best0N3}
        $\Diff$ has best worst-case error over $\F_{0}^m$ with respect to noise in $\E_{N_3}$
    \end{enumerate}
\end{lemma}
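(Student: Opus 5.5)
The plan is to show that any two of the three statements together contradict the sharp lower bounds of \cref{prop:causal:worstcase}, evaluated as $\theta\to\infty$. Strict concavity from \cref{lem:gopt:concavelimit} supplies the gap. Throughout, write $G_L(N):=\lim_{\tau\to\infty}\bar g_{N,k}^{m,L}(\tau)$. By \eqref{eq:lim:barg:scaled} and \eqref{eq:gopt-scaling}, $G_L(N)=c\,N^{\frac{n-k}{n}}L^{\frac{k}{n}}$ with $n=m+1$ and $c>0$, while $G_0\equiv 0$.

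The first step is to fix the right notion of a worst-case error that is attained. If $\Diff$ has best worst-case error over $\F_{L}^m$ with respect to $\E_N$, then \cref{def:optimal:causal} together with \cref{prop:causal:worstcase}\ref{item:prop:causal:worstcase:everyic}) gives $M_{N,k}^{\F_L^m}(\theta)=\bar g_{N,k}^{m,L}(\theta)$ for all $\theta>0$. The upper bound is obtained by taking the supremum over $\R$. Consequently, for every $\varepsilon>0$ and every large $\theta$ there is a pair $f_1\in\F_L^m$, $\eta_1\in\E_N$ with $|[\Diff_k(f_1+\eta_1)](\theta)-f_1^{(k)}(\theta)|>\bar g_{N,k}^{m,L}(\theta)-\varepsilon$. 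The supremum in the definition of $M$ also runs over $\tau\ge\theta$, so I will first use the monotonicity of $\bar g$ in $T$ (\cref{lem:gopt-limit}\ref{it:gopt-limit:nonincreasing})) to move the witness time to a single instant.

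The second step feeds such a near-worst witness into \cref{lem:worstcase:additive}, choosing the splitting so that the resulting pair $(f,\eta)$ lies in the class governed by the \emph{other} assumed optimality statement. For the pair (a)+(b), take $L_1=L$, $N_1$ as in the statement, $L_2=0$ and noise increment $N_2-N_1$. For the pair (c)+(a) or (c)+(b), take the witness from (c) with $L_1=0$ and noise $N_3$, and $L_2=L$. Rather than adding noise, I will represent the noise level as a convex combination, $N_2=\lambda N_1+(1-\lambda)N_3$ with $\lambda\in(0,1)$, and apply the lemma to the scaled pieces $\lambda(\cdot)$ and $(1-\lambda)(\cdot)$, using the homogeneity \eqref{eq:gopt-homogeneity}. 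In the limit $\theta\to\infty$ the additive lemma then yields a lower bound on the error at level $(L,N_2)$ of the form $\lambda G_L(N_1)+(1-\lambda)G_L(N_3)$ plus a correction. The optimality assumption gives an upper bound at the same level. Comparing the two via \cref{lem:gopt:concavelimit}, and using that in the $L=0$ case the limit vanishes, is meant to make the inequalities incompatible.

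The main obstacle is that $\Diff$ is nonlinear. Lower bounds obtained at one parameter pair therefore do not transfer to scaled or combined pairs automatically; \cref{lem:worstcase:additive} is the only tool for moving between them, and it only adds a $\bar g$ term. Since $G_0\equiv0$, a naive use of the lemma only reproduces $G_L(N_1)$ and gives no gap. Concretely, I must arrange the decomposition so that the strict inequality of \cref{lem:gopt:concavelimit} appears. My first attempt would exploit the finite-time values $\bar g^{m,0}_{N,k}(\theta)=\theta^{-k}\bar g^{m,0}_{N,k}(1)>0$ before passing to the limit, together with \eqref{eq:gopt-addnoise}. If that does not give strictness, I would fall back on the explicit extremals from \cref{prop:gopt:extremal}\ref{it:extremal:existence}). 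For those extremals $|g|=N$ only at finitely many instants, which leaves slack for a perturbation, and the perturbation could then be used to build the contradicting signal directly.
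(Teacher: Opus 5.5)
Your proposal has a genuine gap. The only concrete mechanism in it, \cref{lem:worstcase:additive}, cannot produce a contradiction, and the fallback (``build the contradicting signal directly'') is exactly the part that is missing.

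That lemma turns a witness at level $(L_1,N_1)$ into one at $(L_1+L_2,N_1+N_2)$ whose error is larger by $\gop^{m,L_2}_{N_2,k}(\theta)$. The witnesses you can exhibit (from \cref{prop:causal:worstcase}, or from an assumed optimality rescaled via the homogeneity in \cref{prop:bestworstcase:properties}) are only guaranteed to have error close to $\gop^{m,L_1}_{N_1,k}(\theta)$. Under an optimality assumption at that level, their error is no more than that. But $\gop$ is jointly superadditive: $g_j\in\mathcal{G}^m_{L_j,N_j}(\theta)$ implies $g_1+g_2\in\mathcal{G}^m_{L_1+L_2,N_1+N_2}(\theta)$, so $\gop^{m,L_1}_{N_1,k}(\theta)+\gop^{m,L_2}_{N_2,k}(\theta)\le\gop^{m,L_1+L_2}_{N_1+N_2,k}(\theta)$. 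Hence the transferred lower bound is always compatible with best worst-case error at the target level, however you split or rescale.

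Your convex-combination variant illustrates this. By \cref{lem:gopt:concavelimit}, $\lambda G_L(N_1)+(1-\lambda)G_L(N_3)<G_L(N_2)$, so concavity works against you there. Moreover, the lemma only increases $L$ and $N$, so it cannot link $(L,N_1)$ or $(L,N_2)$ with $(0,N_3)$. From $(0,N_3)$ you land at noise level at least $N_3$, above what (a) or (b) controls. From $(L,N_1)$ you land at $L'\ge L>0$, which (c) does not control. More basically, lower bounds like \cref{prop:causal:worstcase} hold for every causal differentiator and are attained under best worst-case error, so they cannot be the source of the contradiction.

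The missing idea is to play the two assumed \emph{upper} bounds against each other on one and the same output value. You need a single input that admits two signal/noise decompositions, one covered by each assumption.

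For (a) and (b), set $N=\frac{N_1+N_2}{2}$ and take $g\in\mathcal{G}^m_{L,N}(T)$ with $g^{(k)}(T)$ close to $\gop^{m,L}_{N,k}(T)$. Let $u=\frac{N_2-N_1}{N_1+N_2}\,g$. On $[0,T]$ one has $|u-g|\le N_1$ and $|u+g|\le N_2$. Truncating the noise after $T$ and using causality, $[\Diff_k u](T)$ lies within $\gop^{m,L}_{N_1,k}(T)$ of $g^{(k)}(T)$ and within $\gop^{m,L}_{N_2,k}(T)$ of $-g^{(k)}(T)$. This forces $2g^{(k)}(T)\le\gop^{m,L}_{N_1,k}(T)+\gop^{m,L}_{N_2,k}(T)$, which strict concavity rules out for large $T$; here concavity enters in the right direction.

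For (a) and (c), and likewise (b) and (c), feed $g_3\in\mathcal{G}^m_{L,N_3}(T)$ with $g_3^{(k)}(T)>\gop^{m,L}_{N_3,k}(T)-\varepsilon$. Read as the noise-free signal $g_3\in\FL^m$, (a) gives $[\Diff_k g_3](T)\ge g_3^{(k)}(T)-\gop^{m,L}_{N_1,k}(T)$. Read as the polynomial $0\in\F_0^m$ plus noise $g_3$ (truncated after $T$), (c) gives $[\Diff_k g_3](T)\le\gop^{m,0}_{N_3,k}(T)$, which tends to $0$. Since $\lim_{T\to\infty}\gop^{m,L}_{N,k}(T)$ is strictly increasing in $N$ for $L>0$, these bounds are incompatible for small $\varepsilon$ and large $T$. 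Only strict monotonicity, not concavity, is needed in this case. This is the paper's proof; it adds an inessential polynomial $g_2$ to the input in the second case.
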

\begin{remark}
\label{rem:optimalL0}
    Note that the statement is false for $L = 0$.
Indeed, the worst-case error of the differentiator defined as
$\Diff[u](t) = \frac{u(t) - u(0)}{t}$ for $t > 0$
    has worst-case error $M_{N,1}^{\F_{0,\R}^1}(t) \le \frac{2N}{t} = \bar g_{N,1}^{1,0}(t)$ for all $N \in \RR_{\ge 0}$ and $t > 0$.
    It hence has best worst-case error over $\F_0^1$ with respect to noise in $\EN$ for every $N \ge 0$.
\end{remark}
\begin{proof}
To see that \ref{item:bestLN1}) and \ref{item:bestLN2}) preclude each other, suppose to the contrary that $\Diff$ has best worst-case error over $\FL^m$ with respect to noise in both $\E_{N_1}$ and $\E_{N_2}$.
Let $N = \frac{N_1+N_2}{2}$.
    Due to Lemma~\ref{lem:gopt:concavelimit}, it is then possible to pick a sufficiently small $\varepsilon > 0$ and sufficiently large $T > 0$ such that $\bar g_{N,k}^{m,L}(T) \ge \frac{1}{2} \bar g_{N_1,k}^{m,L}(T) + \frac{1}{2} \bar g_{N_2,k}^{m,L}(T) + \varepsilon$.
    Consider any $g \in \mathcal{G}_{L,N}^{m}(T)$ with $g^{(k)}(T) \ge \bar g_{N,k}^{m,L}(T) - \frac{\varepsilon}{2}$ and let $u = \frac{N_2-N_1}{N_2+N_1} g$.
    Since $|u(\tau) - g(\tau)| = \frac{2N_1}{N_1+ N_2} |g(\tau)| \le N_1$ and $|u(\tau) + g(\tau)| = \frac{2N_2}{N_1+ N_2} |g(\tau)| \le N_2$ hold for all $\tau \in [0,T]$, the differentiator output, when applying $u$ as an input, fulfills the inequalities $\abs{[\Diff_k u](T) - g^{(k)}(T)} \le \bar g_{N_1,k}^{m,L}(T)$ and $\abs{[\Diff_k u](T) + g^{(k)}(T)} \le \bar g_{N_2,k}^{m,L}(T)$ due to the best worst-case error property. As a consequence, 
    $g^{(k)}(T) - \bar g_{N_1,k}^{m,L}(T) \le [\Diff_k u](T) \le -g^{(k)}(T) + \bar g_{N_2,k}^{m,L}(T)$ holds, and rearranging yields the contradiction
    \begin{equation}
        \bar g_{N_1,k}^{m,L}(T) + \bar g_{N_2,k}^{m,L}(T) \ge 2g^{(k)}(T) \ge 2\bar g_{N,k}^{m,L}(T) - \varepsilon \ge \bar g_{N_1,k}^{m,L}(T) + \bar g_{N_2,k}^{m,L}(T) + \varepsilon.
    \end{equation}
    To show that \ref{item:bestLN1}) and \ref{item:best0N3}) preclude each other, suppose to the contrary that both are true and select  $\varepsilon, T > 0$, such that $\gop_{N_3,k}^{m,0}(T) + \gop_{N_1,k}^{m,0}(T) < \gop_{N_3,k}^{m,L}(T) - \gop_{N_1,k}^{m,L}(T) - \varepsilon$, which is possible because the left-hand side tends to zero as $T$ increases, while the right-hand side has a positive limit for $N_3 > N_1$.
    Pick $g_2 \in \mathcal{G}_{0,N_1}^{m}(T)$, $g_3 \in \mathcal{G}_{L,N_3}^{m}(T)$ such that $g_3^{(k)}(T) > \gop_{N_3,k}^{m,L}(T) - \varepsilon$, and consider $u = g_2 + g_3$.
    Due to the assumed worst-case error properties, $[\Diff u](T) \le g_2^{(k)}(T) + \gop_{N_3,k}^{m,0}(T) \le \gop_{N_1,k}^{m,0}(T) + \gop_{N_3,k}^{m,0}(T)$ then holds, as well as $[\Diff u](T) \ge g_3^{(k)}(T) - \gop_{N_1,k}^{m,L}(T)$. This leads to the contradiction
    \begin{equation}
    \gop_{N_1,k}^{m,0}(T) + \gop_{N_3,k}^{m,0}(T) \ge g_3^{(k)}(T) - \gop_{N_1,k}^{m,L}(T) > \gop_{N_3,k}^{m,L}(T) - \gop_{N_1,k}^{m,L}(T) - \varepsilon.
    \end{equation}
    The fact that \ref{item:bestLN2}) and \ref{item:best0N3}) preclude each other, finally, follows from the fact that \ref{item:bestLN1}) and \ref{item:best0N3}) preclude each other and that \ref{item:bestLN2}) has the same structure as \ref{item:bestLN1}).
\end{proof}

\begin{proof}[Proof of \cref{prop:bestworstcase:properties}]
    For item~\ref{it:bestworstcase:homogeneity}), suppose to the contrary that there exist $\alpha \in [0,1]$, $f_1 \in \F_{\alpha L}^m$, $\eta_1 \in \E_{\alpha N}$ such that $|[\Diff_k(f_1+\eta_1)](T) - f_1^{(k)}(T)| > \gop_{\alpha N,k}^{m,\alpha L}(T) = \alpha \gop_{N,k}^{m,L}(T)$ holds for some $T \in \RR_{> 0}$, where \eqref{eq:gopt-homogeneity} of \cref{lem:gopt-scaling} was used.
Using \cref{lem:worstcase:additive} with $L_1 = \alpha L$, $L_2 = (1-\alpha) L$, $N_1 = \alpha N$, $N_2 = (1-\alpha) N$, and \eqref{eq:gopt-homogeneity} again, then guarantees existence of $f \in \FL^m$, $\eta \in \EN$ such that
\begin{align}
|[\Diff_k(f+\eta)](T) - f^{(k)}(T)| &> \alpha \gop_{N,k}^{m,L}(T) + (1-\alpha) \gop_{N,k}^{m,L}(T) = \gop_{N,k}^{m,L}(T)
\end{align}
contradicting the fact that $\Diff$ has best worst-case error over $\FL^m$ with respect to noise in $\EN$.
Item~\ref{it:bestworstcase:exact0}) follows by setting $\alpha = 0$ in the previous item, and noting that then $M^{\F_0^m}_{0,k}(t) \le \gop_{0,k}^{m,0}(t) = 0$ holds for all $t > 0$.
To prove item~\ref{it:bestworstcase:robustness}) note that for $\alpha \in [0,1]$, 
\begin{equation}
    Q^{\F_0^m}_{\alpha N,k}(t) \le M^{\F_0^m}_{\alpha N,k}(t) + M^{\F_0^m}_{0,k}(t) \le M^{\F_{\alpha L}^m}_{\alpha N,k}(t) \le \alpha \gop_{N,k}^{m,L}(t)
\end{equation}
holds for $t > 0$, where items~\ref{it:bestworstcase:homogeneity}) and~\ref{it:bestworstcase:exact0}) are used.
Hence, $Q^{\F_0^m}_k(t) \le \lim_{\alpha \to 0^+} \alpha \gop_{N,k}^{m,L}(t) = 0$.
For item~\ref{it:bestworstcase:exactnecessary}), let $N,L > 0$ and suppose that $\Diff$ is exact in finite time over $\FL^m$.
Then \cref{prop:exact:worstcase}, item~\ref{item:prop:exact:worstcase:limit}) yields\footnote{the strict inequality is not true if either $L$ or $N$ equal $0$.} $M_{N,k}^{\FL^m}(t) \ge \lim_{\tau \to \infty} \bar g_{N,k}^{m,2L}(\tau) > \lim_{\tau \to \infty} \bar g_{N,k}^{m,L}(\tau)$, contradicting  the bound $M_{N,k}^{\FL^m}(t) \le \bar g_{N,k}^{m,L}(t)$ for all $t > 0$.
To see item~\ref{it:bestworstcase:parameter}), suppose that $\Diff$ has best worst-case error over $\F_{L'}^m$ with respect to noise in $\E_{N'}$ in addition to having best worst-case error over $\FL^m$ with respect to noise in $\EN$.
Without restriction of generality, assume $L' \le L$.
Distinguish the cases $L' = 0$ and $L' > 0$.
In the first case, with the purpose of provoking a contradiction, suppose that $L N' > 0$.
Then, there exists $\alpha \in (0,1]$ such that $\alpha N < N'$.
Using item~\ref{it:bestworstcase:homogeneity}) with that $\alpha$ and applying \cref{prop:optimal:causal:uniqueness} with $N_2 = \alpha N$, $N_3 = N'$ then yields a contradiction.
In the second case, i.e., $L' > 0$, use item~\ref{it:bestworstcase:homogeneity}) with $\alpha = \frac{L'}{L}$ to obtain a contradiction by applying \cref{prop:optimal:causal:uniqueness} with $N_1 = \min\{\alpha N, N'\}$, $N_2 = \max\{\alpha N, N'\}$ unless $N_1 = N_2$, i.e., unless $L' N = L N'$.
\end{proof}

\subsubsection{Best Worst-Case Error of Exact Differentiators and Deterministic Optimality}

The previous results show that a differentiator with best worst-case error necessarily requires knowledge of the noise amplitude $N$ in the form of the ratio $N/L$ (cf. \cref{rem:NLratio}).
We now turn toward defining a suitable concept of optimality of a differentiator irrespective of the noise amplitude for a given $L > 0$.
Ideally, such an optimal differentiator would be required to have best worst-case error over $\FL^m$ with respect to $\EN$ for all $N \ge 0$.
However, this is impossible according to \cref{prop:optimal:causal:uniqueness}.
Indeed, having best worst-case error over $\FL^m$ with respect to $\E_0$ is equivalent to exactness from the beginning (cf. \cref{rem:bestworstcase:exactness}), which precludes having best worst-case error for $L, N > 0$ according to \cref{prop:bestworstcase:properties}\ref{it:bestworstcase:exactnecessary}).
Given that exactness is a not only desired but also useful property of a differentiator, the previous facts motivate some definition of optimality of the worst-case error that can be satisfied within the class of exact differentiators.
\cref{prop:exact:worstcase} hence motivates the following definition of deterministically optimal causal differentiators.

\begin{defin}
\label{def:optimal:exact}
    Let $L\in\RRge$, $m\in\NN$. A causal differentiator $\Diff$ of order $m$ is called deterministically optimal over $\FL^m$, if its worst-case error satisfies
    $M_{N,k}^{\F_{L'}^m}(t) \le \bar g_{N,k}^{m,L+L'}(t)$ for all $t \in \RR_{> 0}$, $L' \in [0, L]$, $N \in \RR_{\ge 0}$ and $k\in\{1,\ldots,m\}$.
\end{defin}

The following proposition shows that it is indeed enough to impose the definition's condition for $L' = L$.
\begin{proposition}
    \label{prop:optimal:exact:equivdef}
    Let $L \in \RR_{\ge 0}$, $m \in \NN$ and consider a causal differentiator $\Diff$ of order $m$.
    Then, $\Diff$ is deterministically optimal over $\FL^m$ if and only if its worst-case error satisfies
    \begin{equation}
    \label{eq:def:optimal:exact}
    M_{N,k}^{\FL^m}(t) \le \gop_{N,k}^{m,2L}(t)
    \end{equation}
    for all $t \in \RR_{> 0}$, $N \in \RR_{\ge 0}$, and $k\in\{1,\ldots,m\}$.
\end{proposition}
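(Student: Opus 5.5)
The ``only if'' direction is immediate: taking $L'=L$ in \cref{def:optimal:exact} gives exactly \eqref{eq:def:optimal:exact}. For the ``if'' direction, assume \eqref{eq:def:optimal:exact} holds for all $t>0$, $N\ge 0$ and $k$, and fix $L'\in[0,L]$. We must show $M_{N,k}^{\F_{L'}^m}(t)\le \gop_{N,k}^{m,L+L'}(t)$. The idea mirrors the proof of \cref{prop:bestworstcase:properties}\ref{it:bestworstcase:homogeneity}: argue by contradiction and use \cref{lem:worstcase:additive} to lift a violation on the smaller class to a violation on $\FL^m$.

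Suppose there exist $T>0$, $f_1\in\F_{L'}^m$ and $\eta_1\in\E_{N_1}$ such that
\[
|[\Diff_k(f_1+\eta_1)](T)-f_1^{(k)}(T)|>\gop_{N_1,k}^{m,L+L'}(T),
\]
where $N_1=N$. Apply \cref{lem:worstcase:additive} with $L_1=L'$, $L_2=L-L'$, $c=\gop_{N_1,k}^{m,L+L'}(T)$ and a noise increment $N_2\ge 0$ still to be chosen. This yields $f\in\FL^m$ and $\eta\in\E_{N_1+N_2}$ whose error at $T$ exceeds
\[
\gop_{N_1,k}^{m,L+L'}(T)+\gop_{N_2,k}^{m,L-L'}(T).
\]
To contradict \eqref{eq:def:optimal:exact} with noise bound $N_1+N_2$, it suffices to choose $N_2$ such that
\[
\gop_{N_1,k}^{m,L+L'}(T)+\gop_{N_2,k}^{m,L-L'}(T)\ge \gop_{N_1+N_2,k}^{m,2L}(T).
\]
Use the homogeneity relation \eqref{eq:gopt-homogeneity}. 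If $L'>0$, set $N_2=\frac{L-L'}{L+L'}N_1$ and $\gamma_1=\frac{L+L'}{2L}$, $\gamma_2=\frac{L-L'}{2L}$. Then $\gop_{N_1,k}^{m,L+L'}=\gamma_1\gop_{N_1/\gamma_1,k}^{m,2L}$ and $\gop_{N_2,k}^{m,L-L'}=\gamma_2\gop_{N_2/\gamma_2,k}^{m,2L}$, and both arguments $N_1/\gamma_1$ and $N_2/\gamma_2$ equal $\frac{2L N_1}{L+L'}=N_1+N_2$. Since $\gamma_1+\gamma_2=1$, the sum is exactly $\gop_{N_1+N_2,k}^{m,2L}(T)$, which gives the desired contradiction.

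The degenerate cases need separate treatment. If $L=0$ then $L'=0$ and the statement is trivial. If $L'=0<L$, the choice above gives $N_2=N_1$. This is consistent, since $\gop^{m,L}_{N,k}+\gop^{m,L}_{N,k}=2\gop^{m,L}_{N,k}=\gop^{m,2L}_{2N,k}$ by \eqref{eq:gopt-homogeneity}. If $N_1=0$, take $N_2=0$; both sides vanish except $\gop_{0,k}^{m,\cdot}=0$, and the argument still goes through.

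The main care point is choosing $N_2$ so that the sum of the two bounds matches $\gop^{m,2L}$ exactly. The homogeneity identity achieves this because the ratios $N/L$ agree: $\frac{N_1}{L+L'}=\frac{N_2}{L-L'}$. The remaining check is that \cref{lem:worstcase:additive} applies with the strict inequality at a single time $T$. This suffices because the supremum over $\tau\ge t$ in the worst-case error means that a violation at some $T\ge t$ is already a violation; monotonicity of $\gop$ in $T$ (\cref{lem:gopt-limit}) handles the passage from $T$ to $t$.
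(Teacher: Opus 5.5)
Your proposal is correct and is essentially the paper's proof: argue by contradiction, apply \cref{lem:worstcase:additive} with $L_1=L'$, $L_2=L-L'$, $N_2=\frac{L-L'}{L+L'}N$ (the paper writes $L'=\alpha L$, giving $N_2=\frac{1-\alpha}{1+\alpha}N$), and use \eqref{eq:gopt-homogeneity} to collapse the sum of the two bounds into $\gop^{m,2L}_{N_1+N_2,k}(T)$. The differences are cosmetic. You normalize to $2L$ rather than $L$, and you spell out the passage from a violation at a single time to a violation of the worst-case error via monotonicity in $T$, which the paper leaves implicit. Your separate case split on $L'>0$ is unnecessary, since the formula for $N_2$ only requires $L>0$.
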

\begin{proof}
It is clear that \eqref{eq:def:optimal:exact} is necessary for \cref{def:optimal:exact}.
To show that it is also sufficient, suppose to the contrary that there exist $N \in \RR_{\ge 0}$, $\alpha \in [0,1]$, $f_1 \in \F_{\alpha L}^m$, $\eta_1 \in \EN$ such that $|[\Diff_k(f_1+\eta_1)](T) - f_1^{(k)}(T)| > \gop_{N,k}^{m,(1+\alpha)L}(T)$ holds for some $T \in \RR_{> 0}$ and $k\in\NN$ with $k\le m$.
Using \cref{lem:worstcase:additive} with $L_1 = \alpha L$, $L_2 = (1-\alpha) L$, $N_1 = N$, $N_2 = \frac{1-\alpha}{1+\alpha} N$ then guarantees existence of $f \in \FL^m$, $\eta \in \E_{N'}$ with $N' = N_1 + N_2 = \frac{2}{1+\alpha} N$ such that
\begin{align}
|[\Diff_k(f+\eta)](T) - f^{(k)}(T)| &> \gop_{N,k}^{m,(1+\alpha)L}(T) + \gop_{\frac{1-\alpha}{1+\alpha} N,k}^{m,(1-\alpha)L}(T) \nonumber \\
&= (1+\alpha) \gop_{\frac{1}{1+\alpha} N,k}^{m,L}(T) + (1-\alpha) \gop_{\frac{1}{1+\alpha} N,k}^{m,L}(T) \nonumber \\
&= 2 \gop_{\frac{1}{1+\alpha} N,k}^{m,L}(T) = \gop_{N',k}^{m,2L}(T),
\end{align}
where \eqref{eq:gopt-homogeneity} in \cref{lem:gopt-scaling} was used. This contradicts the fact that \eqref{eq:def:optimal:exact} holds for all $N\in\RRge$. \end{proof}

A differentiator that has best worst-case error over $\FL^m$ with respect to noise in $\EN$ for all $N \ge 0$ (which is possible only for $L = 0$, cf. Remark~\ref{rem:optimalL0}) is also deterministically optimal over $\FL^m$.
However, deterministic optimality of a differentiator over $\FL^m$ does not imply its having best worst-case error over $\FL^m$ with respect to noise in $\EN$, except if $N = 0$ or $L = 0$.
Indeed, causal differentiators that are deterministically optimal are always exact from the beginning and, additionally, robust almost from the beginning over $\FL^m$ (cf. \cref{prop:bestworstcase:properties}\ref{it:bestworstcase:exactnecessary}).
\begin{proposition}
\label{prop:optimal:properties}
    Let $L, L' \in \RR_{\ge 0}$, $m \in \NN$ and suppose that a causal differentiator $\Diff$ of order $m$ is deterministically optimal over $\FL^m$.
    Then,
    \begin{enumerate}[a)]
    \item
    \label{it:det-optimal:exactness}
    $\Diff$ is exact from the beginning over $\FL^m$;
    \item
    \label{it:det-optimal:robustness}
    $\Diff$ is robust almost from the beginning over $\FL^m$;
\item 
    \label{it:det-optimal:parameter}
    $\Diff$ can be deterministically optimal over $\F_{L'}^m$ only if $L' = L$.
    \end{enumerate}
\end{proposition}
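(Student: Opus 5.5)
The three items follow from \cref{def:optimal:exact} by specializing the noise bound, using the homogeneity of $\gop$ from \cref{lem:gopt-scaling}, and reducing item~\ref{it:det-optimal:parameter}) to the uniqueness statement of \cref{prop:optimal:causal:uniqueness}.

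\emph{Item~\ref{it:det-optimal:exactness}).} Take $L'=L$ and $N=0$ in \cref{def:optimal:exact}. This gives $M_{0,k}^{\FL^m}(t)\le \gop_{0,k}^{m,2L}(t)=0$ for all $t>0$, since $\bar g_{0,k}^{m,L}\equiv 0$. Hence $\M_0^{\FL^m}(t)=\zero$ for all $t>0$, which is exactness from the beginning.

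\emph{Item~\ref{it:det-optimal:robustness}).} Fix $t>0$, $\epsilon>0$, $u_1=f\in\FL^m$ and $u_2=f+\eta$ with $\eta\in\E_\epsilon$. By the triangle inequality,
\[
\bigl|[\Diff_k u_1](\tau)-[\Diff_k u_2](\tau)\bigr| \le \bigl|f^{(k)}(\tau)-[\Diff_k f](\tau)\bigr| + \bigl|f^{(k)}(\tau)-[\Diff_k(f+\eta)](\tau)\bigr|.
\]
The first term vanishes for $\tau\ge t$ by item~\ref{it:det-optimal:exactness}). The second is at most $M_{\epsilon,k}^{\FL^m}(\tau)\le \gop_{\epsilon,k}^{m,2L}(\tau)$, which is bounded by $\gop_{\epsilon,k}^{m,2L}(t)$ because $\gop$ is non-increasing in time (\cref{lem:gopt-limit}\ref{it:gopt-limit:nonincreasing})). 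So $Q^{\FL^m}_{\epsilon,k}(t)\le \gop_{\epsilon,k}^{m,2L}(t)$.

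It remains to show this tends to $0$ as $\epsilon\to0^+$. For $L>0$, use the scaling relation \eqref{eq:gopt-scaling} with $\alpha=1$, $\beta=\epsilon/N_0$:
\[
\gop_{\epsilon,k}^{m,2L}(t)=\beta^{(m+1-k)/(m+1)}\,\gop_{N_0,k}^{m,2L}\bigl(\beta^{-1/(m+1)}t\bigr)\le \beta^{(m+1-k)/(m+1)}\,\gop_{N_0,k}^{m,2L}(t),
\]
where the inequality holds because $\beta^{-1/(m+1)}t\ge t$ for $\beta\le1$ and $\gop$ is non-increasing in time. The right-hand side vanishes as $\beta\to0$, since $k\le m$ makes the exponent positive and $\gop_{N_0,k}^{m,2L}(t)$ is finite. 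For $L=0$, \eqref{eq:gopt-homogeneity} with $\gamma=\epsilon$ gives $\gop_{\epsilon,k}^{m,0}=\epsilon\,\gop_{1,k}^{m,0}\to0$ directly.

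The only real check here is finiteness of $\gop_{N_0,k}^{m,2L}(t)$ for $t>0$. It follows from the existence of maximizers in \cref{prop:gopt:extremal}, or equivalently from the equiboundedness in \cref{lem:G:der:bounded}.

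\emph{Item~\ref{it:det-optimal:parameter}).} Suppose $\Diff$ is deterministically optimal over both $\FL^m$ and $\F_{L'}^m$, and assume without loss of generality $L'<L$. Taking the worst-case inequality in \cref{def:optimal:exact} for the larger parameter $L$ with signal-class parameter $L'$, and comparing it with the same inequality for the parameter $L'$ itself, we get:
\begin{itemize}
\item from optimality over $\FL^m$: $M_{N,k}^{\F_{L'}^m}(t)\le\gop_{N,k}^{m,L+L'}(t)$;
\item from optimality over $\F_{L'}^m$: $M_{N,k}^{\F_{L'}^m}(t)\le\gop_{N,k}^{m,2L'}(t)$.
\end{itemize}
These bounds alone are not contradictory. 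Instead, I would produce two best-worst-case properties on a common signal class and invoke \cref{prop:optimal:causal:uniqueness}.

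Item~\ref{it:det-optimal:exactness}) applied to the parameter $L'$ shows $\Diff$ is exact from the beginning over $\F_{L'}^m$, i.e.\ it has best worst-case error over $\F_{L'}^m$ with respect to $\E_0$ (\cref{rem:bestworstcase:exactness}). On the other hand, from optimality over $\FL^m$ I would try to extract best worst-case error over $\F_{L'}^m$ with respect to some $\E_{N'}$ with $N'>0$. The natural route is to use the rescaled bound $\gop_{N,k}^{m,L+L'}=\frac{L+L'}{L'}\,\gop_{\frac{L'}{L+L'}N,k}^{m,L'}$ from \eqref{eq:gopt-homogeneity}, combined with \cref{lem:worstcase:additive}, in the style of the proof of \cref{prop:optimal:exact:equivdef}. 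This would then contradict \cref{prop:optimal:causal:uniqueness} for $L'>0$, with the case $L'=0$ handled through statement~\ref{item:best0N3}) of that lemma.

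I expect this extraction to be the main obstacle. A more direct alternative is to mimic the concavity argument in the proof of \cref{prop:optimal:causal:uniqueness}: take an input $u=c\,g$ with $g$ near-extremal for large $T$, view it once as a noisy signal in $\FL^m$ and once as a noisy signal in $\F_{L'}^m$, and use exactness over $\F_{L'}^m$ together with the strict concavity of \cref{lem:gopt:concavelimit} to force contradictory bounds on $[\Diff_k u](T)$.
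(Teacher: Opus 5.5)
Your items a) and b) are correct and follow the paper. For b), the paper only asserts $\lim_{N\to0^+}\gop_{N,k}^{m,2L}(t)=0$; your scaling argument justifies it, and the case split on $L$ is unnecessary because \eqref{eq:gopt-scaling} also holds for $L=0$.

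Item c), however, is not proved, and neither of your routes works as stated. Your main route tries to extract, from optimality over $\FL^m$, that $\Diff$ has best worst-case error over $\F_{L'}^m$ with respect to some $\E_{N'}$, $N'>0$. That cannot follow, because it fails for every differentiator that is deterministically optimal over $\FL^m$. By your item a), $\Diff$ is exact from the beginning over $\FL^m$. Then \cref{prop:exact:worstcase}\ref{item:prop:exact:worstcase:beginning}), with exactness class $\F_L^m$ and signal class $\F_{L'}^m$, gives $M_{N',k}^{\F_{L'}^m}(t)\ge\gop_{N',k}^{m,L+L'}(t)>\gop_{N',k}^{m,L'}(t)$. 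In the same vein, \cref{lem:worstcase:additive} can only transfer upper bounds from $\FL^m$ down to $\F_{L'}^m$. The best bound it yields is $\gop_{N,k}^{m,L+L'}$, as in the proof of \cref{prop:optimal:exact:equivdef}, so the factor $\frac{L+L'}{L'}$ in your rescaled identity never disappears.

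Your alternative uses exactness over the wrong class. Suppose the noise-free view of $u$ is the one in $\F_{L'}^m$ and the noisy view is in $\FL^m$. Then the largest achievable discrepancy between the two candidate derivatives, $\gop_{N,k}^{m,L+L'}(T)$, does not exceed the admissible error $\gop_{N,k}^{m,2L}(T)$. So no contradiction arises, and the concavity in $N$ from \cref{lem:gopt:concavelimit} is not the relevant mechanism.

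The missing idea is to pair an upper and a lower bound on the same quantity $M_{N,k}^{\F_{L'}^m}(t)$, with the lower bound coming from exactness over the \emph{larger} class $\FL^m$.
You already wrote down the upper bound $M_{N,k}^{\F_{L'}^m}(t)\le\gop_{N,k}^{m,2L'}(t)$ from optimality over $\F_{L'}^m$.
For the lower bound, use exactness from the beginning over $\FL^m$ together with \cref{lem:exact:worst-case-f} (equivalently \cref{prop:exact:worstcase}). Take $g\in\F_{L+L'}^m$ with $|g|\le N$ on $[0,t]$. The input $\frac{L}{L+L'}g\in\FL^m$ is differentiated exactly. Yet on $[0,t]$ it equals $f+\eta$ with $f=-\frac{L'}{L+L'}g\in\F_{L'}^m$ and $\eta=g$, so the error at $t$ is $|g^{(k)}(t)|$. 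This gives $M_{N,k}^{\F_{L'}^m}(t)\ge\gop_{N,k}^{m,L+L'}(t)$.

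Since $L+L'>2L'$, you still need the strict inequality $\gop_{N,k}^{m,L+L'}(t)>\gop_{N,k}^{m,2L'}(t)$ for $N,t>0$. You can obtain it with the bump construction from the proof of \cref{lem:psi:bound}: add a perturbation of amplitude $L-L'$ to an extremal function for $2L'$. This inequality yields the contradiction. That is exactly the paper's proof, and it needs neither \cref{prop:optimal:causal:uniqueness} nor concavity.
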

\begin{proof}
For $t > 0$ and $k\in\NN$, $k\le m$, Definition~\ref{def:optimal:exact} yields $M_{0,k}^{\FL^m}(t) \le \bar g_{0,k}^{m,2L}(t) = 0$, proving item~\ref{it:det-optimal:exactness}), and
    $
        Q_{N,k}^{\FL^m}(t) \le M_{N,k}^{\FL^m}(t) + M_{0,k}^{\FL^m}(t) \le \bar g_{N,k}^{m,2L}(t).
    $
    Item~\ref{it:det-optimal:robustness}) follows from the fact that $\lim_{N \to 0^+} \bar g_{N,k}^{m,2L}(t) = \bar g_{0,k}^{m,2L}(t) = 0$.
To show item~\ref{it:det-optimal:parameter}), finally, let $L > L'$ without restriction of generality.
    If $\Diff$ is deterministically optimal over $\FL^m$, then it is also exact from the beginning over $\FL^m$ according to item~\ref{it:det-optimal:exactness}).
    For each $N > 0$ and $t > 0$, \cref{prop:exact:worstcase} then yields $\R \in \RR_{\ge 0}^{m+1}$ such that $M_{N,k}^{\F_{L',\R}^m}(t) \ge \bar g_{N,k}^{m,L+L'}(t) > \bar g_{N,k}^{m,2L'}(t)$, precluding deterministic optimality over $\F_{L'}^m$.
\end{proof}

As a consequence of exactness from the beginning over $\F_0^m$ according to \cref{{prop:bestworstcase:properties}}, item~\ref{it:bestworstcase:exact0}) or over $\FL^m$ according to \cref{prop:optimal:properties}, item~\ref{it:det-optimal:exactness}), differentiators that either have best worst-case error over $\FL^m$ in the sense of \cref{def:optimal:causal} or are deterministically optimal in the sense of \cref{def:optimal:exact} can never be robust from the beginning due to \cref{th:connections:causal-exact}.
Indeed, also the differentiator shown in \cref{rem:optimalL0} is robust only \emph{almost} from the beginning.

Item~\ref{it:bestworstcase:parameter}) of \cref{prop:bestworstcase:properties} uses \cref{prop:optimal:causal:uniqueness} to show that designing a differentiator with best worst-case error in the sense of Definition~\ref{def:optimal:causal} with respect to noise in $\EN$ necessarily requires the ratio of $N/L$ of noise amplitude and highest derivative bound to be known to the differentiator.
While such a differentiator always features some limited robustness and exactness properties over $\F_0^m$ according to items~\ref{it:bestworstcase:exact0}) and~\ref{it:bestworstcase:robustness}) of Proposition~\ref{prop:bestworstcase:properties}, it cannot be exact over $\FL^m$ according to  item~\ref{it:bestworstcase:exactnecessary}), unless the noise amplitude is known to be zero (cf. also \cref{rem:bestworstcase:exactness}).
On the other hand, deterministic optimality in the sense of \cref{def:optimal:exact} comes at the cost of increasing the worst-case error, but achieves optimality for all noise amplitudes, in some sense, and yields a robust exact differentiator over $\FL^m$ according to Proposition~\ref{prop:optimal:properties}, items~\ref{it:det-optimal:exactness}) and~\ref{it:det-optimal:robustness}).
Such a differentiator does not require knowledge of the noise amplitude $N$, while
knowledge of $L$ remains required according to Proposition~\ref{prop:optimal:properties}, item~\ref{it:det-optimal:parameter}).

For differentiation order $m = 1$, examples of a linear differentiator having best worst-case error over $\FL^1$ with respect to noise in $\EN$ (\cref{def:optimal:causal}) and a nonlinear differentiator that is deterministically optimal over $\FL^1$ (\cref{def:optimal:exact}) can be obtained from \cite[Lemma~5.4]{seehai_auto23} and \cite[Lemma~1]{aldsee_tac25}, respectively.
\begin{eexample}
        For given $N,L > 0$, consider the first-order linear differentiator $\Diff_T$ defined as
        \begin{equation*}
            [\Diff_T u](t) = \begin{cases}
                0 & \text{if $t = 0$} \\
                \frac{u(t) - u(0)}{t} & \text{if $t \in (0, T)$} \\
                \frac{u(t) - u(t-T)}{T} & \text{otherwise}
            \end{cases}
        \end{equation*}
        with the particular parameter choice $T = 2\sqrt{N/L}$.
        Using \cite[Lemma 5.4]{seehai_auto23} and $\gop_{N,1}^{1,L}$ from \Cref{exmp:gop1}, its differentiation error is bounded by
        \begin{align*}
            |[\Diff_T (f+\eta)](t) - \dot f(t)| &\le \begin{cases}
                \frac{L t}{2} + \frac{2N}{t} & \text{if } t \in (0,T) = (0, 2 \sqrt{N/L} ) \\
                \frac{L T}{2} + \frac{2N}{T} = 2 \sqrt{NL} & \text{if } t \ge 2 \sqrt{N/L}
            \end{cases}
            \nonumber \\
            &= \gop_{N,1}^{1,L}(t)
        \end{align*}
        for $f \in \FL^1, \eta \in \EN$, and hence it has best worst-case error over $\FL^1$ with respect to noise in $\EN$ in the sense of Definition~\ref{def:optimal:causal}.
        It is not deterministically optimal in the sense of Definition~\ref{def:optimal:exact} (cf. also Proposition~\ref{prop:optimal:exact:equivdef}), because $\gop_{N,1}^{1,2L}(t) > \gop_{N,1}^{1,L}(t)$.
\end{eexample}
\begin{eexample}
\label{exmp:ored}
        For given $L > 0$, consider the first-order optimal robust exact differentiator $\Diff_{\mathrm{O}}$, a specific case of the differentiator originally proposed in \cite[Section~5.1]{seehai_auto23}, defined here via the relations
        \begin{subequations}
        \begin{align*}
            \hat N(t) &= \frac{1}{2} \sup_{\substack{T \in (0,t] \\ \sigma \in [0,T]}} \left( \left| u(t-\sigma) - u(t) + \frac{u(t) - u(t-T)}{T} \sigma \right| - \frac{L \sigma (T-\sigma)}{2} \right) \\
            \hat T &= \min\left\{ t, 2 \sqrt{\hat N(t)/L} \right\} \\
            [\Diff_{\mathrm{O}} u](t) &= \begin{cases}
                0 & \text{if $t = 0$} \\
                \lim_{h \to 0^+} \frac{u(t) - u(t-h)}{h} & \text{if $t > 0$ and $\hat T(t) = 0$} \\
                \frac{u(t) - u(t- \hat T(t))}{\hat T(t)} & \text{otherwise}
            \end{cases}
        \end{align*}
        \end{subequations}
        According to \cite[Lemma~1]{aldsee_tac25} along with $\gop_{N,1}^{1,2L}$ from \Cref{exmp:gop1}, its differentiation error is bounded by
        \begin{align*}
            |[\Diff_{\mathrm{O}} (f + \eta)](t) - \dot f(t)| &\le \begin{cases}
                L t + \frac{2N}{t} & \text{if } t \in (0, \sqrt{2 N/L}) \\
                2 \sqrt{2 N L} & \text{if } t \ge \sqrt{2 N/L}
            \end{cases} \nonumber \\
            &= \gop_{N,1}^{1,2L}(t)
        \end{align*}
        for all $N \ge 0$.
        From \Cref{prop:optimal:exact:equivdef} it is hence seen to be deterministically optimal over $\FL^1$ in the sense of Definition~\ref{def:optimal:exact}.
        Since it is exact from the beginning over $\FL^1$, it also has best worst-case error over $\FL^1$ with respect to noise in $\E_0$ in the sense of Definition~\ref{def:optimal:causal}.
\end{eexample}
\begin{eexample}
        As already mentioned in \Cref{rem:optimalL0}, the first-order linear differentiator $\Diff_\mathrm{A}$ defined as
        \begin{equation*}
        \label{eq:def:asymptotic-diff}
        [\Diff\sbrm{A} u](t) = \begin{cases}
            \frac{u(t) - u(0)}{t} & \text{if $t > 0$} \\
            0 & \text{otherwise},
        \end{cases}
        \end{equation*}
        has best worst-case error over $\F_0^1$ with respect to noise in $\EN$ in the sense of Definition~\ref{def:optimal:causal}, because its differentiation error satisfies
        \begin{equation*}
            |[\Diff_{\mathrm{A}} (f + \eta)](t) - \dot f(t)| \le \frac{2N}{t} = \gop_{N,1}^{1,0}(t)
        \end{equation*}
        for $t > 0$ and $f\in \FL^1, \eta \in \EN$, again using \cite[Lemma 5.4]{seehai_auto23} and $\gop_{N,1}^{1,L}$ with $L = 0$ from \Cref{exmp:gop1}.
        Since this is true for all $N \ge 0$ and $L = 0$, it is also deterministically optimal over $\F_0^1$ in the sense of Definition~\ref{def:optimal:exact}.
\end{eexample}

Up to now it is not clear, however, whether differentiators in the sense of Definitions~\ref{def:optimal:causal} and~\ref{def:optimal:exact} actually exist for arbitrary differentiation order $m > 1$.
The following two subsections hence prove the existence of optimal differentiators with arbitrary order.

\subsection{Existence of Robust Differentiators with Best Worst-Case Error}

In order to fully characterize differentiators with best worst-case error and study their existence, define, for given $L,N \in \RR_{\ge 0}$, and each $\theta \in \RR_{> 0}$ and $i = 1, \ldots, m$ the interval $[\J_{N,i}^{m,L}u](\theta)$ as
\begin{equation}
\label{eq:def:Jti}
    [\J_{N,i}^{m,L}u](\theta) =\left[ \sup_{\substack{h \in \FL^m \\ \|h-u\|_{\infty,\theta} \le N}} h^{(i)}(\theta) - \gop^{m,L}_{N,i}(\theta),\inf_{\substack{h \in \FL^m \\ \|h-u\|_{\infty,\theta} \le N}} h^{(i)}(\theta) + \gop^{m,L}_{N,i}(\theta) \right]
\end{equation}
for $u \in \mathcal{U}$, where it is understood that $[\J_{N,i}^{m,L}u](\theta) = (-\infty, \infty)$ in case the constraints in the supremum and infimum are infeasible.

In the course of this subsection, the following theorem will be proven.
\begin{theorem}
\label{thm:bestworstcase:main}
    Let $m \in \NN$, $L, N \in \RR_{\ge 0}$ and consider the interval $[\J_{N,i}^{m,L}u](\theta)$ defined in \eqref{eq:def:Jti} for $i = 1, \ldots, m$.
    Then,
    \begin{enumerate}[a)]
    \item
    \label{it:bestworstcase:main:existence}
    the interval  
    $[\J_{N,i}^{m,L}u](\theta)$ is nonempty for all $u \in \mathcal{U}$ and all $\theta \in \RR_{> 0}$;
    \item
    \label{it:bestworstcase:main:characterization}
        a causal differentiator $\Diff$ of order $m$ has best worst-case error over $\FL^m$ with respect to noise in $\EN$ if and only if $[\Diff_i u](t) \in [\J_{N,i}^{m,L}u](t)$ for all $u \in \mathcal{U}$ and $t > 0$;
    \item 
    \label{it:bestworstcase:main:construction}
        the differentiator $\Diff$ of order $m$ defined as
        \begin{equation}
        \label{eq:bestworstcase:diff}
            [\Diff_i u](t) = \begin{cases}
                \frac{\inf [\J_{N,i}^{m,L}u](t) + \sup [\J_{N,i}^{m,L}u](t)}{2} & \text{if $t > 0$ and $[\J_{N,i}^{m,L}u](t)$ is bounded} \\
                0 & \text{otherwise}
            \end{cases}
        \end{equation}
        is causal, has best worst-case error over $\FL^m$ with respect to noise in $\EN$, and is robust almost from the beginning over $\FL^m + \E_{N-\delta}$ for all $\delta \in (0,N]$;
        \item 
        \label{it:bestworstcase:main:linear}
        there exist time-varying parameters $d_{t,i,1}, \ldots, d_{t,i,P_{t,i}} \in \RR$, $\tau_{t,i,1}, \ldots, \tau_{t,i,P_{t,i}} \in [0,t]$ with $P_{t,i} \in \NN$, defined for each $t > 0$,
        such that the linear differentiator $\Diff$ of order $m$ defined as
        \begin{equation}
            [\Diff_i u](t) = \begin{cases}
                \sum_{j = 1}^{P_{t,i}} d_{t,i,j} u(t - \tau_{t,i,j}) & \text{if $t > 0$} \\
                0 & \text{otherwise}
            \end{cases}
        \end{equation}
        is causal, has best worst-case error over $\F_{\alpha L}^m$ with respect to noise in $\E_{\alpha N}$ for all $\alpha \in \RR_{\ge 0}$, and is robust almost from the beginning over $\mathcal{U}$ (i.e., strongly robust).
    \end{enumerate}
\end{theorem}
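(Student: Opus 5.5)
Throughout, write $H_\theta(u) = \{h \in \FL^m : \|h-u\|_{\infty,\theta} \le N\}$, the set of noise-free signals consistent with the measurements up to time $\theta$, and $S_i(\theta) = \sup_{h\in H_\theta(u)} h^{(i)}(\theta)$, $I_i(\theta) = \inf_{h\in H_\theta(u)} h^{(i)}(\theta)$.

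\textbf{Item a).} I would show $S_i(\theta) - I_i(\theta) \le 2\gop^{m,L}_{N,i}(\theta)$. Take $h_1, h_2 \in H_\theta(u)$ and set $g = (h_1-h_2)/2$. Then $g \in \FL^m$, and $|g(\tau)| \le N$ on $[0,\theta]$ by the triangle inequality. Hence $g \in \mathcal{G}_{L,N}^m(\theta)$, so $h_1^{(i)}(\theta) - h_2^{(i)}(\theta) = 2 g^{(i)}(\theta) \le 2\gop^{m,L}_{N,i}(\theta)$. Taking the supremum over $h_1$ and the infimum over $h_2$ gives the claim, and this is exactly nonemptiness of the interval. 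If $H_\theta(u)$ is empty, the interval is $\RR$ by convention.

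\textbf{Item b).} For sufficiency, let $u = f+\eta$ with $f\in\FLR^m$ and $\eta\in\EN$. Then $f \in H_t(u)$, so $I_i(t) \le f^{(i)}(t) \le S_i(t)$. If the output lies in $[S_i - \gop,\, I_i + \gop]$, then $|[\Diff_i u](t) - f^{(i)}(t)| \le \gop^{m,L}_{N,i}(t)$. Since $\FLR^m$ and $t$ are arbitrary, this bounds $M^{\FLR^m}_{N,i}(t)$ as required.

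For necessity, suppose $[\Diff_i u](t) < S_i(t) - \gop$ for some $u$ and $t$. Pick $h \in H_t(u)$ with $h^{(i)}(t) - [\Diff_i u](t) > \gop$. Define $\eta = u - h$ on $[0,t]$ and $\eta = 0$ afterwards, so $\eta \in \EN$. By causality, the input $h+\eta$ produces the same output at time $t$ as $u$. Since $h \in \F^m_{L,\R}$ for $\R$ large, this gives error exceeding $\gop$ at time $t$, a contradiction. The upper endpoint is handled symmetrically. If the interval is all of $\RR$, the condition is vacuous.

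\textbf{Item c).} Causality holds because $H_t(u)$ depends only on $u|_{[0,t]}$. The best worst-case error property follows from b), since the midpoint lies in the interval.

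For robustness over $\FL^m+\E_{N-\delta}$, fix $u_1$ in this class and let $u_2 = u_1 + \eta$ with $\|\eta\|\le\epsilon < \delta$. Both $H_t(u_1)$ and $H_t(u_2)$ are nonempty, and their endpoints are finite by \cref{lem:G:der:bounded}. Any $h \in H_t(u_2)$ satisfies $\|h-u_1\|\le N+\epsilon$. The key estimate is that enlarging $N$ to $N+\epsilon$ moves $S_i$ and $I_i$ only by $o(1)$ as $\epsilon\to0$. I would prove this with a convex-combination trick: since $u_1 = f_0+\eta_0$ with $|\eta_0| \le N-\delta$, we have $f_0\in H$ with slack $\delta$. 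For $h$ with $\|h-u_1\|\le N+\epsilon$, the function $\lambda h + (1-\lambda) f_0$ with $\lambda = \delta/(\delta+\epsilon)$ satisfies
\[
  \|\lambda h + (1-\lambda) f_0 - u_1\| \le \lambda(N+\epsilon) + (1-\lambda)(N-\delta) = N,
\]
so it lies in $H_t(u_1)$. Its $i$-th derivative differs from $h^{(i)}(t)$ by $(1-\lambda)|h^{(i)}(t) - f_0^{(i)}(t)|$, which is $O(\epsilon)$ because the derivatives are bounded by \cref{lem:G:der:bounded}. Hence both endpoints, and so the midpoint, change by $O(\epsilon)$ for each $t>0$. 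To get a bound uniform in $\tau \ge t$, I would use the fact that the derivative bounds depend only on $\gop$, which is nonincreasing in $\tau$ by \cref{lem:gopt-limit}. This uniformity is one of two delicate points.

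\textbf{Item d).} This is the main obstacle. The plan is to characterize extremal $g$ in the definition of $\gop^{m,L}_{N,i}(t)$ by duality. Using \cref{prop:gopt:extremal}, take an extremal $g$ that touches $\pm N$ only at finitely many points $t-\tau_{t,i,j}$. A Lagrange or Kuhn--Tucker argument, applied to the linear functional $h\mapsto h^{(i)}(t)$ over the convex set $\mathcal{G}_{L,N}^m(t)$, should produce weights $d_{t,i,j}$ supported on these contact points. These weights should satisfy $\sum_j d_{t,i,j} p(t-\tau_{t,i,j}) = p^{(i)}(t)$ for polynomials $p$ of degree at most $m$. They should also give the Peano-kernel bound $|\sum_j d_{t,i,j} f(t-\tau_{t,i,j}) - f^{(i)}(t)| \le \sum_j |d_{t,i,j}| N + (\text{kernel}) L = \gop^{m,L}_{N,i}(t)$, with equality attained by $g$.

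With such weights, the error bound gives best worst-case error over $\FL^m$ with noise in $\EN$. The $\alpha$-scaling then follows from linearity together with \eqref{eq:gopt-homogeneity}. Strong robustness is immediate, since $|[\Diff_i(u_1-u_2)](\tau)| \le \epsilon\sum_j |d_{\tau,i,j}|$. The second delicate point is showing that $\sup_{\tau\ge t}\sum_j|d_{\tau,i,j}|$ is finite for each $t>0$; I would bound it by $\gop^{m,L}_{N,i}(\tau)/N \le \gop^{m,L}_{N,i}(t)/N$ when $N>0$, and treat $N=0$ separately via scaling with \eqref{eq:gopt-scaling}.
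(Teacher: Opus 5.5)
\textbf{Items a)–c) are sound; the real gap is in d).}

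In d), the existence of the weights $d_{t,i,j}$ is the entire content of the item, and saying a ``Lagrange or Kuhn--Tucker argument should produce'' them is an assertion, not an argument. The identity $N\sum_j|d_{t,i,j}|+L\|K\|_1=\gop_{N,i}^{m,L}(t)$ is exactly strong duality \emph{with dual attainment} for a problem with a continuum of constraints $|h(s)|\le N$, $s\in[0,t]$. Weak duality only gives $N\sum_j|d_{t,i,j}|+L\|K\|_1\ge\gop_{N,i}^{m,L}(t)$ for arbitrary polynomial-reproducing weights. ``Equality attained by $g$'' characterizes optimal weights once they exist, but does not produce them.

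To close the gap along your route:
\begin{enumerate}
\item Keep $\FL^m$ as the abstract convex set and dualize only the constraint $h|_{[0,t]}\in[-N,N]$ in $C[0,t]^2$, whose positive cone has nonempty interior.
\item Apply a Lagrange duality theorem such as Luenberger's, with Slater point $h=0$ (this needs $N>0$). It yields nonnegative Radon measures $\mu_\pm$ with $\gop_{N,i}^{m,L}(t)=\sup_{h\in\FL^m}\bigl[h^{(i)}(t)-\int h\,d(\mu_+-\mu_-)\bigr]+N(\|\mu_+\|+\|\mu_-\|)$.
\item Finiteness of that supremum, since $\FL^m+\F_0^m=\FL^m$, forces exact reproduction of polynomials of degree at most $m$.
\item Complementary slackness against the finitely-touching extremal of \cref{prop:gopt:extremal}\ref{it:extremal:existence}) concentrates $\mu_\pm$ on $\{g=\pm N\}$, i.e.\ on finitely many Dirac masses. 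This is what makes $u\mapsto\int u\,d\mu$ meaningful for non-measurable $u\in\mathcal{U}$.
\end{enumerate}
The error bound then follows directly as $|\int u\,d\mu-f^{(i)}(t)|\le N\|\mu\|+(\gop_{N,i}^{m,L}(t)-N\|\mu\|)$, with no Peano kernel needed. It also gives the bound $\|\mu\|\le\gop_{N,i}^{m,L}(t)/N$ that your uniformity argument requires.

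Drop the plan to ``treat $N=0$ via scaling''. First, \eqref{eq:gopt-scaling} needs $\beta>0$. Second, for $N=0<L$ item d) is actually false: finitely many samples cannot give $f^{(i)}(t)$ exactly on $\FL^m$. For a counterexample, take $c(s-t)^i$ times a small bump around $t$ that avoids the other sample points, with $c$ small enough that the $(m+1)$-th derivative stays below $L$. The paper's proof of d) runs through \cref{lem:linear:diff}, which assumes $N>0$.

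\textbf{Comparison with the paper.} The paper proves d) without any topology. It builds the sublinear majorant $\bar H_{\theta,i}(u)=\limsup_{\gamma\to0^+}\gamma^{-1}H_{\theta,i}(\gamma u)$ and shows it is nonnegative on inputs vanishing at $\theta-\tau$, $\tau\in\mathcal{T}_{\theta,i}$. It then extends the zero functional by the Hahn--Banach dominated extension theorem. Your route, once completed, additionally exhibits the structure of the weights: polynomial exactness and sign alignment with the extremal.

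In c) your route is genuinely different and more elementary. The paper gets the local Lipschitz bound of \cref{lem:Hti:properties}\ref{it:H:localLipschitz}) from strong Lagrangian duality plus a bound on the multiplier. Your convex combination $\lambda h+(1-\lambda)f_0$ with the slack function $f_0$ needs no duality at all. Two small points to fix there:
\begin{itemize}
\item The derivative bound you need is $|h^{(i)}(\tau)-f_0^{(i)}(\tau)|\le 2\gop_{N,i}^{m,L}(\tau)$, which follows from $(h-f_0)/2\in\mathcal{G}_{L,N}^m(\tau)$. \Cref{lem:G:der:bounded} only gives qualitative boundedness, not this.
\item Run the argument in both directions, using slack $\delta-\epsilon$ for $u_2$. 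This gives $Q_{\epsilon,i}^{\FL^m+\E_{N-\delta}}(t)\le\frac{2\epsilon}{\delta}\gop_{N,i}^{m,L}(t)$.
\end{itemize}

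In b), make explicit that $\gop_{N,i}^{m,L}$ is non-increasing in $t$ when passing from pointwise errors to $M_{N,i}^{\FLR^m}(t)$.
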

\begin{remark}
    It is worth to point out that the existence of the linear differentiator in item~d)---though also obtainable constructively using other results from literature, such as \cite{bagdas_book98}---here will be proven non-constructively by means of the Hahn-Banach dominated extension theorem.
\end{remark}

\subsubsection{Example}

The following example shows that the differentiator from \cref{thm:bestworstcase:main}, item~\ref{it:bestworstcase:main:construction}), while being robust almost from the beginning over $\FL^m + \E_{N-\delta}$ for all $\delta \in (0, N]$, does not have this property for $\delta = 0$, i.e., over $\FL^m+\EN$.

\begin{example}
Consider the differentiator $\Diff$ defined in \eqref{eq:bestworstcase:diff} for the case of first-order differentiation, i.e., $m = 1$.
For $N,L > 0$, let $T_0 \in \RR_{\ge 0}$, $T_1 = T_0 + \frac{1}{2}\sqrt{\frac{N}{L}}$ and consider the input $u \in \FL^1 + \EN$ defined as
\begin{equation}
    u(t) = \begin{cases}
        N & t \in [0, T_1), t \in \mathbb{Q} \\
        -N & t \in [0, T_1), t \notin \mathbb{Q} \\
        N + \frac{L}{2} (t-T_1)^2 & \text{otherwise}.
    \end{cases}
\end{equation}
Every $h \in \FL^1$ with $\|h-u\|_{\infty} \le N$ necessarily has to be zero on $[0,T_1]$.
Hence, the only possible $h \in \FL^1$ satisfying $\|h-u\|_{\infty} \le N$ is given by
\begin{equation}
    h(t) = \begin{cases}
        0 & t \in [0, T_1) \\
        \frac{L}{2}(t-T_1)^2 & \text{otherwise},
    \end{cases}
\end{equation}
yielding $[\Diff u](t) = \dot h(t)$ for all $t \ge 0$.
For $\delta \in (0, \frac{N}{8})$, consider now a modified input $u_\delta$ defined as
\begin{equation}
    u_{\delta}(t) = \begin{cases}
        -N+\delta & t \in [T_1 - \sqrt{2\delta/L}, T_1), t \notin \mathbb{Q} \\
        u(t) & \text{otherwise}.
    \end{cases}
\end{equation}
Clearly, $\|u_\delta- u\|_{\infty} \le \delta$ for all $t$.
Since all $h_{\delta} \in \FL^1$ with $\|h_{\delta} - u_{\delta}\|_{\infty} \le N$ have to satisfy $h_{\delta}(t) = h(t) = 0$ for $t \in [0, T_1 - \sqrt{2\delta/L})$ and $\ddot h(t) = L$ for $t \ge T_1$, it is easy to see that $\dot h_{\delta}(t) \le \dot h(t) + \sqrt{2 \delta L}$.
Thus, 
\begin{equation}
\label{eq:exmp:sup_h}
\inf [\J_{N,1}^{1,L}u](t) + \gop_{N,1}^{1,L}(t) = \sup_{\substack{h \in \FL^1 \\ \|h-u\|_{\infty,t} \le N}} \dot h(t) \le [\Diff u](t) + \sqrt{2 \delta L}.
\end{equation}

Now, consider one particular hypothesis $h_\delta \in \FL^1$ given by
\begin{equation}
    h_{\delta}(t) = \begin{cases}
        0 & t \in [0, T_1 - \sqrt{2\delta/L}) \\
        \frac{L}{2} (t-T_1+ \sqrt{2\delta/L})^2 & t \in [T_1 - \sqrt{2\delta/L}, T_2) \\
        h(T_2) + 2N - \frac{\delta}{2} + \\
        \qquad [\dot h(T_2) + \sqrt{2 \delta L}](t - T_2) - \frac{L}{2} (t - T_2)^2 & t \in [T_2, T_3]
    \end{cases}
\end{equation}
with $T_{2} = T_1 + \sqrt{2}\frac{N-3\delta/4}{\sqrt{\delta L}}$ and $T_3 = T_2 + \sqrt{\delta/(2L)} + \sqrt{2 N/L}$.
Since $h$ may be written as $h(t) = h(T_2) + \dot h(T_2) (t - T_2) + \frac{L}{2} (t-T_2)^2$ for $t \in [T_2, T_3]$, clearly
\begin{equation}
    h_{\delta}(t) - h(t) = 2N - \frac{\delta}{2} + \sqrt{2 \delta L}(t-T_2) - L (t - T_2)^2
\end{equation}
from which it is straightforward to verify that, indeed, $|h_{\delta}(t) - u_\delta(t)| \le N$ holds for these values of $t$ by using the fact that $u_\delta(t) = u(t) = h(t) + N$.
Furthermore,
\begin{equation}
    \dot h_{\delta}(t) - \dot h(t) = \sqrt{2 \delta L} - 2 L (t - T_2)
\end{equation}
yielding $\dot h_{\delta}(T_3) = \dot h(T_3) - 2 \sqrt{2 N L}$.
Hence,
\begin{equation}
\label{eq:exmp:inf_h}
    \sup [\J_{N,1}^{1,L}u](T_3) - \gop_{N,1}^{1,L}(T_3) = \inf_{\substack{h \in \FL^1 \\ \|h-u\|_{\infty,T_3} \le N}} \dot h(T_3) \le  [\Diff u](T_3) - 2\sqrt{2 N L}.
\end{equation}
Consequently, $[\Diff u_{\delta}](T_3) \le [\Diff u](T_3) - \sqrt{2NL} + \sqrt{\delta L/2}$ is obtained by taking the average of \eqref{eq:exmp:sup_h} at $t = T_3$ and \eqref{eq:exmp:inf_h}, showing that $Q^{\FL^1+\EN}_{\delta,1}(T_3) \ge  \sqrt{2NL}- \sqrt{\delta L/2}$.
Since $T_0$ was arbitrary and $T_3$ increases with $T_0$,
$Q_{\delta,1}^{\FL^1+\EN}(t) > \frac{3}{4} \sqrt{NL}$ holds for all $\delta \in (0,\frac{N}{8})$ and $t \ge 0$, showing lack of any form of robustness of \eqref{eq:bestworstcase:diff} over $\FL^1 + \EN$.
\end{example}

\subsubsection{Technical Results}

In order to prove \cref{thm:bestworstcase:main}, the following functional is introduced.
Let $L, N \in \RR_{\ge 0}$, $m\in\NN$ and define, for each $\theta \in \RR_{> 0}$ and $i = 1, \ldots, m$, the functional $H_{\theta,i} : \FL^m + \EN \to \RR$ as
    \begin{equation}
    \label{eq:def:Hti}
        H_{\theta,i}(u) = \inf_{\substack{h \in \FL^m \\ \|h-u\|_{\infty,\theta} \le N}} h^{(i)}(\theta) + \gop^{m,L}_{N,i}(\theta).
    \end{equation}
\begin{remark}
\label{rem:Hminus}
    It is worth noting that
    \begin{equation}
        -H_{\theta,i}(-u) = \sup_{\substack{h \in \FL^m \\ \|h-u\|_{\infty,\theta} \le N}} h^{(i)}(\theta) - \gop^{m,L}_{N,i}(\theta)
    \end{equation}
    and hence $[\J_{N,i}^{m,L}u](\theta) = [-H_{\theta,i}(-u), H_{\theta,i}(u)]$ whenever $u \in \FL^m + \EN$.
\end{remark}

The functional $H_{\theta,i}$ characterizes differentiators with best worst-case error in the following sense.
\begin{proposition}
\label{prop:optimal:causal:characterization}
Let $L,N\in\RRge$, $m\in\NN$ and $\Diff$ be a causal differentiator of order $m$. Consider $H_{\theta,i}$ as defined in \eqref{eq:def:Hti}. The following statements are equivalent:
\begin{enumerate}[a)]
\item
$\Diff$ has best worst-case error over $\FL^m$ with respect to noise in $\EN$.
\item
\label{it:optimalcausal:interval}
$\Diff$ satisfies $[\Diff_i u](t) \in [-H_{t,i}(-u), H_{t,i}(u) ]$ for all $u \in \FL^m + \EN$, $t > 0$ and $i \in \{1, \ldots, m\}$.
\end{enumerate}
\end{proposition}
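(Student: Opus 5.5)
We prove the two implications separately. Throughout, write $S_\theta(u)=\{h\in\FL^m:\|h-u\|_{\infty,\theta}\le N\}$ for the set of ``hypotheses'' consistent with the measurement $u$ on $[0,\theta]$. If $u=f+\eta$ with $f\in\FL^m$ and $\eta\in\EN$, then $f\in S_\theta(u)$, so this set is nonempty for every $u\in\FL^m+\EN$. Hence $H_{t,i}(u)$ and $-H_{t,i}(-u)$ are finite or $\mp\infty$ appropriately, and by \cref{rem:Hminus} they equal
\[
\inf_{h\in S_t(u)} h^{(i)}(t)+\gop^{m,L}_{N,i}(t)
\quad\text{and}\quad
\sup_{h\in S_t(u)} h^{(i)}(t)-\gop^{m,L}_{N,i}(t),
\]
respectively.

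\textbf{(b)$\Rightarrow$(a).} Fix $\R$, $t>0$ and $i$, and take any $u=f+\eta$ with $f\in\FLR^m\subseteq\FL^m$ and $\eta\in\EN$. Since $f\in S_t(u)$, the interval condition gives
\[
f^{(i)}(t)-\gop^{m,L}_{N,i}(t)\le \sup_{h\in S_t(u)}h^{(i)}(t)-\gop^{m,L}_{N,i}(t)\le[\Diff_i u](t)\le \inf_{h\in S_t(u)}h^{(i)}(t)+\gop^{m,L}_{N,i}(t)\le f^{(i)}(t)+\gop^{m,L}_{N,i}(t),
\]
so $|f^{(i)}(t)-[\Diff_iu](t)|\le\gop^{m,L}_{N,i}(t)$. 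The same holds at every $\tau\ge t$ with bound $\gop^{m,L}_{N,i}(\tau)$. By \cref{lem:gopt-limit}\ref{it:gopt-limit:nonincreasing}) this is at most $\gop^{m,L}_{N,i}(t)$, so taking suprema yields $M^{\FLR^m}_{N,i}(t)\le\gop^{m,L}_{N,i}(t)$.

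\textbf{(a)$\Rightarrow$(b).} Suppose $\Diff$ has best worst-case error but, for some $u\in\FL^m+\EN$, $t>0$ and $i$, we have $[\Diff_iu](t)>H_{t,i}(u)$; the other side follows by symmetry, or by applying the argument to $-u$. Then there exists $h\in S_t(u)$ with $[\Diff_iu](t)>h^{(i)}(t)+\gop^{m,L}_{N,i}(t)$. The key step is to turn the hypothesis $h$ into an admissible signal-noise pair producing the same measurement on $[0,t]$. Set $f:=h\in\FL^m$, choose $\R$ large enough that $h\in\FLR^m$, and define
\[
\eta:=u-h \text{ on } [0,t], \qquad \eta:=0 \text{ for } \tau>t .
\]
Then $\eta\in\EN$ and $f+\eta$ coincides with $u$ on $[0,t]$. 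By causality, $[\Diff_i(f+\eta)](t)=[\Diff_iu](t)$, and therefore
\[
|f^{(i)}(t)-[\Diff_i(f+\eta)](t)|>\gop^{m,L}_{N,i}(t),
\]
which contradicts $M^{\FLR^m}_{N,i}(t)\le\gop^{m,L}_{N,i}(t)$.

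The main subtlety is the strictness needed to extract $h$ when the infimum is not attained. This is handled directly: by definition of the infimum, a strict inequality $[\Diff_iu](t)>H_{t,i}(u)$ always yields some $h\in S_t(u)$ that is strictly better, so no compactness argument is required. One also has to check that the noise constructed from $h$ need only be defined pointwise. This is fine, since $\EN$ imposes no measurability requirement. Finally, one notes that all quantities remain well defined when $N=0$ or $L=0$; in particular, $\gop$ is then finite for $t>0$.
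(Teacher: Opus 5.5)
Your proof is correct and follows the paper's argument in both directions: (b)$\Rightarrow$(a) uses feasibility of $h=f$ in the infimum and supremum together with monotonicity of $\gop$ in $t$, and (a)$\Rightarrow$(b) argues by contradiction from a hypothesis $h$ that beats the bound $H_{t,i}(u)$. Your use of causality to set $\eta=u-h$ only on $[0,t]$, and your choice of $\R$ with $h\in\FLR^m$, tighten a step the paper glosses over: the paper simply asserts $u=h+\eta$ with $\eta\in\EN$, although $\|h-u\|_{\infty,t}\le N$ only controls $u-h$ on $[0,t]$.
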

\begin{proof}
    To prove that a) implies b), suppose to the contrary, without restriction of generality, that $[\Diff_i u](t) > H_{t,i}(u)$ for some $t$ and $i$.
    From the definition of $H_{t,i}$, there then exists $h \in \FL^m, \eta \in \EN$ such that $u = h + \eta$ and $[\Diff_i u](t) > h^{(i)}(t) + \gop^{m,L}_{N,i}(t)$, contradicting $|[\Diff_i u](t) - h^{(i)}(t)| \le \gop^{m,L}_{N,i}(t)$.

    To show that b) implies a), consider $u = f + \eta$ with $f \in \FL^m$, $\eta \in \EN$.
    Then, $H_{t,i}(u) \le f^{(i)}(t) + \gop^{m,L}_{N,i}(t)$ since $h = f$ is feasible in the definition \eqref{eq:def:Hti}, and hence $-H_{t,i}(-u) \ge f^{(i)}(t) - \gop^{m,L}_{N,i}(t)$.
    Consequently, $|[\Diff_i u](t) - f^{(i)}(t)| \le \gop^{m,L}_{N,i}(t)$ for all $t > 0$, and the claim follows from \cref{def:acc:abs} and the fact that $\gop^{m,L}_{N,i}(t)$ is non-increasing with respect to $t$.
\end{proof}

The following lemma shows some properties of the functional $H_{\theta,i}$ that will be instrumental in proving nonemptiness of the interval $J_{\theta,i}^{L,N}$ and the robustness properties of the differentiators in \cref{thm:bestworstcase:main}.
\begin{lemma}
\label{lem:Hti:properties}
    Let $L,N \in \RR_{\ge 0}$, $\theta \in \RR_{> 0}$, $i,m\in\NN$, $i\le m$, $\delta \in (0,N]$, and consider $H_{\theta,i}$ as defined in \eqref{eq:def:Hti}.
    Then,
    \begin{enumerate}[a)]
        \item 
        \label{it:H:zero}
        $H_{\theta,i}(0) = 0$;
        \item 
        \label{it:H:convexity}
        $H_{\theta,i}$ is convex;
        \item 
        \label{it:H:additivity}
        $H_{\theta,i}(u + f) = H_{\theta,i}(u) + f^{(i)}(\theta)$ for all $f \in \F_0^m$ and all $u \in \FL^m + \EN$;
        \item 
        \label{it:H:interval}
        $|H_{\theta,i}(u) - f^{(i)}(\theta)| \le \gop_{N,i}^{m,L}(\theta)$ whenever $u = f + \eta$ with $f \in \FL^m$, $\eta \in \EN$;
        \item 
        \label{it:H:boundedness}
        $|H_{\theta,i}(u)| \le \gop^{m,L}_{N,i}(\theta)$ whenever $\|u\|_{\infty,\theta} \le N$;
        \item 
        \label{it:H:localLipschitz}
        $|H_{\theta,i}(u_1) - H_{\theta,i}(u_2)| \le 2 \delta^{-1} \gop_{N,i}^{m,L}(\theta)\| u_1 - u_2 \|_{\infty,\theta}$ whenever $u_1,u_2 \in \FL^m+\E_{N-\delta}$;
    \end{enumerate}
\end{lemma}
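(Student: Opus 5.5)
The plan is to write $H_{\theta,i}(u) = \phi(u) + \gop^{m,L}_{N,i}(\theta)$, where
\[
\phi(u) = \inf\{ h^{(i)}(\theta) : h\in\FL^m,\ \|h-u\|_{\infty,\theta}\le N\},
\]
and to work with the feasible sets $S(u)=\{h\in\FL^m : \|h-u\|_{\infty,\theta}\le N\}$. For $u\in\FL^m+\EN$ these sets are nonempty, so $\phi(u)<\infty$. Items a) through e) should all follow from elementary manipulations of $S(u)$, and item f) from convexity combined with boundedness.

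\textbf{Item d).} This comes first, since the others use it. If $u=f+\eta$, then $f\in S(u)$, so $\phi(u)\le f^{(i)}(\theta)$. For any $h\in S(u)$, the difference $g=f-h$ satisfies $\|g\|_{\infty,\theta}\le 2N$ and lies in $\F_{2L}^m$. Writing $g=2\cdot\frac{g}{2}$ with $\frac{g}{2}\in\mathcal{G}^m_{L,N}(\theta)$ gives $f^{(i)}(\theta)-h^{(i)}(\theta)\le 2\gop^{m,L}_{N,i}(\theta)$, which is \eqref{eq:gopt-homogeneity}. Hence
\[
f^{(i)}(\theta)-2\gop^{m,L}_{N,i}(\theta)\le \phi(u)\le f^{(i)}(\theta),
\]
and adding $\gop^{m,L}_{N,i}(\theta)$ yields d).

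\textbf{Items e), a) and c).} Item e) is d) with $f=0$, $\eta=u$. For item a), take $u=0$. The set $S(0)=\mathcal{G}^m_{L,N}(\theta)$ is symmetric, so $\phi(0)=-\gop^{m,L}_{N,i}(\theta)$ and $H_{\theta,i}(0)=0$. For item c), note that $h\mapsto h+f$ is a bijection $S(u)\to S(u+f)$ when $f\in\F_0^m$, because $f^{(m+1)}=0$ preserves the bound $L$.

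\textbf{Item b).} For $h_1\in S(u_1)$ and $h_2\in S(u_2)$, the combination $\lambda h_1+(1-\lambda)h_2$ is in $\FL^m$ and lies within $N$ of $\lambda u_1+(1-\lambda)u_2$. Taking infima gives convexity of $\phi$, and adding the constant $\gop^{m,L}_{N,i}(\theta)$ gives convexity of $H_{\theta,i}$. One point needs care here: I must check that $\lambda u_1+(1-\lambda)u_2\in\FL^m+\EN$, which holds since that set is convex.

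\textbf{Item f), the main step.} Let $\epsilon=\|u_1-u_2\|_{\infty,\theta}$. If $\epsilon=0$, then $S(u_1)=S(u_2)$ (the constraint only involves $[0,\theta]$) and the claim is trivial. If $\epsilon\ge\delta$, then d) gives $|H_{\theta,i}(u_j)-f_j^{(i)}(\theta)|\le\gop$, which alone is not enough; so I would handle all $\epsilon>0$ uniformly by the standard convex-Lipschitz argument. Set
\[
v=u_2+\frac{\delta}{\epsilon}(u_2-u_1)\ \text{on}\ [0,\theta].
\]
Define $v$ arbitrarily after $\theta$, say equal to $u_2$. This is harmless because $H_{\theta,i}$ depends only on values in $[0,\theta]$. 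If $\epsilon\le\delta$ then $v$ stays in $\FL^m+\EN$, since $u_2=f_2+\eta_2$ with $|\eta_2|\le N-\delta$ and the added term has norm $\delta$. Then $u_2=\frac{\epsilon}{\epsilon+\delta}v+\frac{\delta}{\epsilon+\delta}u_1$, and convexity gives
\[
H(u_2)-H(u_1)\le\frac{\epsilon}{\epsilon+\delta}\bigl(H(v)-H(u_1)\bigr).
\]
To bound $H(v)-H(u_1)$ I would use d). Taking $f=f_2$ for $v$ and $f=f_1$ for $u_1$ gives
\[
H(v)-H(u_1)\le 2\gop+f_2^{(i)}(\theta)-f_1^{(i)}(\theta),
\]
and the term $f_2^{(i)}(\theta)-f_1^{(i)}(\theta)$ is not controlled. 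This is the obstacle. To remove it, I would first use d) for $u_1$ with $f_1$, then exploit that $v-u_1$ has small norm relative to decompositions. The cleanest route I would try is to note that $f_1$ is itself a valid decomposition for $v$: indeed
\[
v-f_1=\eta_1+(1+\tfrac{\delta}{\epsilon})(u_2-u_1),
\]
whose norm is at most $N-\delta+\epsilon+\delta$. This is too big when $\epsilon>0$, so that fails as well. The alternative is to apply d) with a common $f$. Since $u_2-u_1=f_2-f_1+\eta_2-\eta_1$ is small on $[0,\theta]$, $f_2-f_1\in\F_{2L}^m$ is bounded by $2N-2\delta+\epsilon$ there. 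So $|f_2^{(i)}(\theta)-f_1^{(i)}(\theta)|$ is of order $\gop$ by scaling (Lemma~\ref{lem:gopt-scaling}), giving a constant multiple of $\gop$ in place of $2\gop$. I expect fixing the exact constant $2\delta^{-1}$ to be the delicate part. With a bound $H(v)-H(u_1)\le 2\gop$ and $\frac{\epsilon}{\epsilon+\delta}\le\frac{\epsilon}{\delta}$, the claim follows by symmetry. For $\epsilon>\delta$, the claimed bound exceeds $2\gop$, which already follows directly from d) using a common decomposition.
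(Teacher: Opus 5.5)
\textbf{Items a)--e) are fine; the gap is in item f).} Your direct proof of d) via $g=f-h\in\F_{2L}^m$, $g/2\in\mathcal{G}^m_{L,N}(\theta)$, replaces the paper's convexity step $H_{\theta,i}(u)\ge -H_{\theta,i}(-u)$ and is equally valid. One small fix in e): replace $u$ by its truncation to $[0,\theta]$, since $u$ need not lie in $\EN$ globally. This does not change $H_{\theta,i}$.

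The problem in f) is this. After writing $u_2=\frac{\epsilon}{\epsilon+\delta}v+\frac{\delta}{\epsilon+\delta}u_1$, you try to bound $H(v)-H(u_1)$. But $\|v-u_1\|_{\infty,\theta}=\epsilon+\delta>\delta$, so there is no common decomposition. The bound $H(v)-H(u_1)\le 2\gop^{m,L}_{N,i}(\theta)$ that you finally invoke is never established. Your fallback estimate $|f_2^{(i)}(\theta)-f_1^{(i)}(\theta)|\le\gop^{m,2L}_{2N-2\delta+\epsilon,i}(\theta)\le 2\gop^{m,L}_{N,i}(\theta)$ only yields $H(v)-H(u_1)\le 4\gop^{m,L}_{N,i}(\theta)$. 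That is the Lipschitz constant $4\delta^{-1}\gop^{m,L}_{N,i}(\theta)$, twice the claimed one.

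Your treatment of $\epsilon>\delta$ is also wrong. For $\epsilon>2N$ no $f\in\FL^m$ lies within $N$ of both $u_1$ and $u_2$. Moreover, by c), $H(u_1+p)-H(u_1)=p^{(i)}(\theta)$ for $p\in\F_0^m$, which exceeds $2\gop^{m,L}_{N,i}(\theta)$ for large $p$. So no $\epsilon$-independent bound is available there.

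\textbf{The missing idea.} Compare $H(v)$ with $H(u_2)$, not with $H(u_1)$. The perturbation $v-u_2=\frac{\delta}{\epsilon}(u_2-u_1)$ has norm exactly $\delta$ on $[0,\theta]$, for every $\epsilon>0$, not only $\epsilon\le\delta$. Hence $f_2$ decomposes both $u_2$ and $v$ with noise at most $N$, and d) gives $H(v)-H(u_2)\le 2\gop^{m,L}_{N,i}(\theta)$.

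Multiply your convexity inequality by $\epsilon+\delta$ and rearrange. This gives $\delta\,(H(u_2)-H(u_1))\le\epsilon\,(H(v)-H(u_2))\le 2\epsilon\,\gop^{m,L}_{N,i}(\theta)$, which is exactly the claim. Swapping $u_1$ and $u_2$ gives the other direction, with no case split. The convex-combination identity only holds on $[0,\theta]$, but that suffices: $H_{\theta,i}$ depends only on those values, and $\FL^m+\EN$ is convex.

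With this repair your route is a genuinely different one from the paper's. The paper obtains f) from strong Lagrangian duality of the problem defining $H_{\theta,i}$. It uses $f$ as a strictly feasible point to restrict the multiplier to $[0,2\delta^{-1}\gop^{m,L}_{N,i}(\theta)]$, and then uses that each dual function is $\lambda$-Lipschitz in $u$. Your primal convexity argument is the elementary counterpart of that multiplier bound. It avoids having to justify strong duality in this infinite-dimensional setting.
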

\begin{proof}
    Item \ref{it:H:zero}) is clear from Remark \ref{rem:Hminus} and the definition of $\gop_{N,i}^{m,L}$ in \eqref{eq:def:G}--\eqref{eq:def:gopt}.
    To show item~\ref{it:H:convexity}), consider arbitrary $u_1, u_2 \in \FL^m+\EN$ and $\lambda \in (0,1)$,
    and note that for every $h_1, h_2 \in \FL^m$, also $h = \lambda h_1 + (1-\lambda) h_2 \in \FL^m$.
Then,
    \begin{align}
        H_{\theta,i}(\lambda u_1 &+ (1-\lambda) u_2) = \inf_{\substack{h \in \FL^m \\ \|h- \lambda u_1 - (1-\lambda) u_2\|_{\infty,\theta} \le N}} h^{(i)}(\theta) + \gop^{m,L}_{N,i}(\theta) \nonumber \\
        &\le \inf_{\substack{h_1,h_2 \in \FL^m \\ \|\lambda (h_1-u_1) + (1-\lambda) (h_2 -u_2) \|_{\infty,\theta} \le N}} \left[\lambda h_1^{(i)} + (1-\lambda) h_2^{(i)}\right] + \gop^{m,L}_{N,i}(\theta) \nonumber \\
        &\le \inf_{\substack{h_1 \in \FL^m \\ \|h_1-u_1 \|_{\infty,\theta} \le N}} \inf_{\substack{h_2 \in \FL^m \\ \|h_2-u_2 \|_{\infty,\theta} \le N}} \left[\lambda h_1^{(i)} + (1-\lambda) h_2^{(i)} \right] + \gop^{m,L}_{N,i}(\theta) \nonumber \\
        &= \lambda H_{\theta,i}(u_1) + (1-\lambda) H_{\theta,i}(u_2),
    \end{align}
    showing convexity of $H_{\theta,i}$.
    For item \ref{it:H:additivity}), consider $f \in \F_0^m$. Then, $h+f \in \FL^m$ holds if and only if $h \in \FL^m$.
    Hence, $H_{\theta,i}(u+f) = H_{\theta,i}(u) + f^{(i)}(\theta)$ follows by substitution.
    Item~\ref{it:H:interval}) is seen from the fact that $H_{\theta,i}(u) \le f^{(i)}(\theta) + \gop^{m,L}_{N,i}(\theta)$ holds, because $f$ is feasible in \eqref{eq:def:Hti}, and $H_{\theta,i}(u) \ge -H_{\theta,i}(-u) \ge f^{(i)}(\theta) - \gop^{m,L}_{N,i}(\theta)$ holds due to convexity and item~\ref{it:H:zero}), using the fact that $0 = \frac{1}{2}u + \frac{1}{2}(-u)$.
    To show item~\ref{it:H:boundedness}), write $u = f+\eta$ with $\eta \in \EN$ and $f(t) = 0$ for $t \in [0, \theta]$.
    The claim is then obtained from item~\ref{it:H:interval}).
For item~\ref{it:H:localLipschitz}), it will be shown that $u = f + \eta$ with $f \in \FL^m$, $\eta \in \E_{N-\delta}$ implies
    \begin{align}
    \label{eq:H:strongduality}
        H_{\theta,i}(u) - \gop_{N,i}^{m,L}(\theta) 
&=\sup_{\lambda \ge 0} \inf_{g \in \FL^m} \left[g^{(i)}(\theta) + \lambda ( \|g - u\|_{\infty,\theta} - N) \right] \nonumber \\
        &= \sup_{\lambda \in [0, \bar \lambda]} \inf_{g \in \FL^m} \left[g^{(i)}(\theta) + \lambda ( \|g - u\|_{\infty,\theta} - N) \right]
    \end{align}
    with $\bar \lambda = 2 \delta^{-1} \gop_{N,i}^{m,L}(\theta)$.
    The first equality therein is obtained from strong Lagrangian duality of \eqref{eq:def:Hti}, because the optimization objective is linear, the constraints are convex, and $f$ is strictly feasible with respect to $\|f -u\|_{\infty,\theta} \le N$.
    To see the second equality in \eqref{eq:H:strongduality}, note that, using \cref{lem:Hti:properties}\ref{it:H:interval}), $f$ satisfies $f^{(i)}(\theta) \le H_{\theta,i}(u) + \gop_{N,i}^{m,L}(\theta)$ and $\|f - u\|_{\infty,\theta} \le N - \delta$.
    Hence $\lambda > \bar \lambda$ implies
    \begin{align}
        \inf_{g \in \FL^m} \left[g^{(i)}(\theta) + \lambda ( \|g - u\|_{\infty,\theta} - N) \right]
        &\le \inf_{\substack{h \in \FL^m \\ \|h - u\|_{\infty,\theta} \le N \\ g = (h+f)/2}} \left[g^{(i)}(\theta) + \lambda ( \|g - u\|_{\infty,\theta} - N) \right] \nonumber \\
        &\le H_{\theta,i}(u) - \frac{\lambda \delta}{2} < H_{\theta,i}(u) - \gop_{N,i}^{m,L}(\theta),
    \end{align}
    contradicting the first equality in \eqref{eq:H:strongduality}
    Consequently, $u_1, u_2 \in \FL^m + \E_{N-\delta}$ satisfy
    \begin{align}
        H_{\theta,i}(u_1) &= \sup_{\lambda \in [0, \bar \lambda]} \inf_{g \in \FL^m} \left[g^{(i)}(\theta) + \lambda ( \|g - u_1\|_{\infty,\theta} - N) \right] + \gop_{N,i}^{m,L}(\theta) \nonumber \\
        &\le \sup_{\lambda \in [0, \bar \lambda]} \inf_{g \in \FL^m} \left[g^{(i)}(\theta) + \lambda ( \|g - u_2\|_{\infty,\theta} - N) \right] + \bar \lambda \|u_1 - u_2\|_{\infty,\theta} + \gop_{N,i}^{m,L}(\theta) \nonumber \\
        &= H_{\theta,i}(u_2) + \bar \lambda \|u_1 - u_2\|_{\infty,\theta}.
    \end{align}
    Analogously, $H_{\theta,i}(u_2) \le H_{\theta,i}(u_1) + \bar \lambda \|u_1 - u_2\|_{\infty,\theta}$ is obtained, proving item~\ref{it:H:localLipschitz}).
\end{proof}

Lipschitz continuity of $H_{t,i}(u)$ on $\FL^m$ with Lipschitz constant $2\gop_{N,i}^{m,L}(t)/N$ according to item~\ref{it:H:localLipschitz}) of Lemma~\ref{lem:Hti:properties} will allow to prove that the differentiator in item~\ref{it:bestworstcase:main:construction}) of \cref{thm:bestworstcase:main} is robust almost from the beginning over $\FL^m + \E_{N-\delta}$ for all $\delta \in (0,N]$.

The following lemma shows existence of a \emph{linear} differentiator with best worst-case error.
In particular, it will be shown that the output of this linear differentiator at any given time instant only depends on the input signal at finitely many time instants.
To that end, define the set
\begin{equation}
\label{eq:set:countablesupport}
    \mathcal{T}_{\theta,i} = \{ \tau \in [0,\theta] : |g(\theta-\tau)| = N \text{ for all  $g \in \mathcal{G}_{L,N}^{m}(\theta)$ with $g^{(i)}(\theta) = \gop^{m,L}_{N,i}(\theta)$} \}
\end{equation}
which, for each fixed $\theta \in \RR_{> 0}$, contains finitely many values due to  \cref{prop:gopt:extremal}\ref{it:extremal:existence}) and is nonempty due to \cref{prop:gopt:extremal}\ref{it:extremal:value}).
Also, recall that a functional $H : \mathcal{U} \to \RR$ is called sublinear if it is subadditive, i.e., $H(u_1+u_2) \le H(u_1)+H(u_2)$ for $u_1,u_2 \in \mathcal{U}$, and positively homogeneous, i.e., $H(\lambda u) = \lambda H(u)$ for $\lambda \in \RR_{> 0}$, $u \in \mathcal{U}$.

\begin{lemma}
    \label{lem:linear:diff}
   Let $L \in \RR_{\ge 0}$, $N, \theta \in \RR_{> 0}$ and consider $H_{\theta,i}$ as defined in \eqref{eq:def:Hti}.
    Then, a sublinear functional $\bar H_{\theta,i} : \mathcal{U} \to \RR$ and a linear functional $D_{\theta,i} : \mathcal{U} \to \RR$ exist satisfying
    \begin{enumerate}[a)]
\item
        \label{it:Dtheta:bound}
        $\bar H_{\theta,i}(u) \le H_{\theta,i}(u)$ for all $u \in \FL^m + \EN$;
        \item
        \label{it:Dtheta:lowerbound}
        $\bar H_{\theta,i}(u) \ge 0$  if $u(\theta-\tau) = 0$ for all $\tau \in \mathcal{T}_{\theta,i}$;
        \item
        \label{it:Dtheta:countablesupport}
        $D_{\theta,i}(u) = 0$  if $u(\theta-\tau) = 0$ for all $\tau \in \mathcal{T}_{\theta,i}$;
        \item 
        \label{it:Dtheta:optimalitybound}
        $D_{\theta,i}(u) \le \bar H_{\theta,i}(u)$ for all $u \in \mathcal{U}$;
        \item 
        \label{it:Dtheta:optimality}
        $D_{\theta,i}(u) \in [-\bar H_{\theta,i}(-u), \bar H_{\theta,i}(u)] \subseteq [ -H_{\theta,i}(-u),  H_{\theta,i}(u)]$ for all $u \in \FL+ \EN$;
        \item
        \label{it:Dtheta:exactness}
        $D_{\theta,i}(f) = f^{(i)}(\theta)$ for all $f \in \F_0^m$; and
        \item
        \label{it:Dtheta:robustness}
        $|D_{\theta,i}(u)| \le \frac{1}{N} \gop^{m,L}_{N,i}(\theta)  \|u\|_{\infty,\theta}$ for all $u \in \mathcal{U}$.
\end{enumerate}
\end{lemma}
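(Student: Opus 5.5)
The plan is to build $\bar H_{\theta,i}$ as a sublinear majorant that sees $u$ only through its values on the finite set $\{\theta-\tau:\tau\in\mathcal{T}_{\theta,i}\}$, and then to obtain $D_{\theta,i}$ by Hahn--Banach. Enumerate $\mathcal{T}_{\theta,i}=\{\tau_1,\ldots,\tau_P\}$. This set is finite and nonempty by \cref{prop:gopt:extremal}. Let $\pi(u)=(u(\theta-\tau_1),\ldots,u(\theta-\tau_P))\in\RR^P$. Candidate definition:
\begin{equation*}
\bar H_{\theta,i}(u)=\inf\Big\{ h^{(i)}(\theta)+\tfrac{\gop^{m,L}_{N,i}(\theta)}{N}\,c \;:\; h\in\F_0^m+\F_{cL/N}^m\text{-type hypotheses},\ c\ge 0,\ \max_j|h(\theta-\tau_j)-u(\theta-\tau_j)|\le c\Big\}.
\end{equation*}
More simply, I would take the perspective (homogenization) of $H_{\theta,i}$ restricted to the finite data. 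Set
\begin{equation*}
\tilde H(x)=\inf\{h^{(i)}(\theta):h\in\FL^m,\ |h(\theta-\tau_j)-x_j|\le N\ \forall j\}+\gop^{m,L}_{N,i}(\theta)\quad\text{for } x\in\RR^P,
\end{equation*}
and define $\bar H_{\theta,i}(u)=\inf_{\lambda>0}\lambda\,\tilde H(\pi(u)/\lambda)$. This is the largest positively homogeneous minorant of a convex function, and it is sublinear because $\tilde H$ is convex. Convexity of $\tilde H$ is proved exactly as \cref{lem:Hti:properties}\ref{it:H:convexity}.

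Next I would verify the properties of $\bar H_{\theta,i}$. Item \ref{it:Dtheta:bound}) holds by taking $\lambda=1$: relaxing the constraint from all of $[0,\theta]$ to the finite set lowers the infimum, so $\tilde H(\pi(u))\le H_{\theta,i}(u)$.

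For item \ref{it:Dtheta:lowerbound}), I need $\tilde H(0)$ and the homogenization to be nonnegative. That is, every $h\in\F_{L/\lambda}^m$ with $|h(\theta-\tau_j)|\le N/\lambda$ must satisfy $h^{(i)}(\theta)\ge -\gop^{m,L}_{N,i}(\theta)/\lambda$. By scaling (\cref{lem:gopt-scaling}, \eqref{eq:gopt-homogeneity}) this reduces to the key claim
\begin{equation*}
\sup\{g^{(i)}(\theta): g\in\FL^m,\ |g(\theta-\tau)|\le N\ \forall\tau\in\mathcal{T}_{\theta,i}\}=\gop^{m,L}_{N,i}(\theta).
\end{equation*}
In words, the finite active set already determines the Landau--Kolmogorov extremal value. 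This is the main obstacle. I would prove it by a Lagrangian/KKT argument for the convex program \eqref{eq:def:gopt}. The program is strictly feasible (take $g=0$), so strong duality holds as in \eqref{eq:H:strongduality}. The optimal dual variable is a nonnegative measure on $[0,\theta]$, and complementary slackness says it is supported where $|g|=N$ for \emph{every} maximizer $g$. Taking a convex combination of maximizers, which is again a maximizer, shows that this support is contained in $\mathcal{T}_{\theta,i}$, using the finiteness from \cref{prop:gopt:extremal}\ref{it:extremal:existence}. Dropping the constraints that carry no multiplier then leaves the optimal value unchanged. As a fallback, I would argue by perturbation: if the relaxed supremum were larger, mix a relaxed maximizer with a strict-in-the-interior original maximizer to produce a feasible point of \eqref{eq:def:gopt} with a larger value.

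Finally I would construct $D_{\theta,i}$ and check the remaining items. Apply Hahn--Banach on the vector space $\mathcal{U}$ with the sublinear $\bar H_{\theta,i}$, starting from the zero functional on the subspace $\{u:\pi(u)=0\}$. This start is admissible because $\bar H_{\theta,i}\ge 0$ there by item \ref{it:Dtheta:lowerbound}). The extension gives items \ref{it:Dtheta:countablesupport}) and \ref{it:Dtheta:optimalitybound}). Item \ref{it:Dtheta:optimality}) follows from linearity, $D(u)=-D(-u)\ge-\bar H(-u)$, combined with item \ref{it:Dtheta:bound}).

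For exactness, item \ref{it:Dtheta:exactness}), $\tilde H(x+\pi(f))=\tilde H(x)+f^{(i)}(\theta)$ for $f\in\F_0^m$, as in \cref{lem:Hti:properties}\ref{it:H:additivity}. Hence $\bar H(f)+\bar H(-f)=0$, which pins $D(f)=\bar H(f)$. Here I also need $\bar H(\pm f)$ to be finite, which follows from the key claim.

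For robustness, item \ref{it:Dtheta:robustness}), I would bound $\bar H(u)$ by the choice $\lambda=N/\|u\|_{\infty,\theta}$ together with \cref{lem:Hti:properties}\ref{it:H:boundedness}. This gives $\bar H(u)\le\frac{\|u\|_{\infty,\theta}}{N}\cdot 2\gop$. To sharpen this to the stated constant $\frac1N\gop$, I would instead use the representation $D(u)=\sum_j d_j u(\theta-\tau_j)$ forced by item \ref{it:Dtheta:countablesupport}). With it, $|D(u)|\le\|u\|_{\infty,\theta}\sum_j|d_j|$, and $\sum_j|d_j|=D(\pm$sign pattern$)$ is bounded by $\bar H$ at a vector of sup-norm $1$. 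That value equals $\gop/N$ by $\tilde H(N\cdot s)\le\gop$ applied to the data vector $s\in\{\pm1\}^P$, via $h=0$ at scale $\lambda$ tending to the appropriate limit.
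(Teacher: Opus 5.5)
Your proposal is correct, but it builds $\bar H_{\theta,i}$ differently from the paper.

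The paper sets $\bar H_{\theta,i}(u)=\limsup_{\gamma\to0^+}\gamma^{-1}H_{\theta,i}(\gamma u)$, which is the directional derivative at $0$ of the full functional $H_{\theta,i}$. Sublinearity and a) then follow from convexity of $H_{\theta,i}$ and $H_{\theta,i}(0)=0$. The finite set $\mathcal T_{\theta,i}$ enters only in b), through the claim that keeping only the constraints $|h(\theta-\tau)|\le N$, $\tau\in\mathcal T_{\theta,i}$, does not push $\inf h^{(i)}(\theta)$ below $-\gop^{m,L}_{N,i}(\theta)$.

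You instead relax first and then homogenize the finite-data functional $\tilde H$. This makes a) immediate and b) hold with equality, and the factorization of $D_{\theta,i}$ through $\pi$ is built in. You could even take $D_{\theta,i}=d\cdot\pi$ with $d\in\partial p(0)$, where $p(x)=\inf_{\lambda>0}\lambda\tilde H(x/\lambda)$ on $\RR^P$, and bypass Hahn--Banach entirely. The price is that you need the key claim already to make $\bar H_{\theta,i}$ real-valued. Without it, $\tilde H$ could be $-\infty$ if the point constraints did not pin down $h^{(i)}(\theta)$. In the paper's version the claim is needed only for b).

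Your proof of the key claim via measure-valued multipliers and complementary slackness is more careful than the paper's one-line ``active constraints'' argument. A convex combination of a violating $h$ with an extremal can leave $[-N,N]$ next to a touching point where $h=\pm N$ with nonzero slope. The paper does not address this case; it can be repaired by first replacing $h$ with $(1-\nu)h$.

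What your route really needs is dual attainment, i.e., existence of an optimal measure. This holds because the constraint is posed in $C([0,\theta])$, whose positive cone has interior, so $g=0$ is a Slater point. The scalar-multiplier duality \eqref{eq:H:strongduality} you cite does not localize the multiplier, so you should cite the $C([0,\theta])$ version instead.

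In g), the factor $2$ is a slip. Taking $\lambda=\|\pi(u)\|_\infty/N$ (not its reciprocal) and $h=0$ gives $\bar H_{\theta,i}(\pm u)\le\gop^{m,L}_{N,i}(\theta)\|u\|_{\infty,\theta}/N$ directly. The sign-pattern detour is therefore unnecessary, and in it ``equals'' should read ``is at most''. You should also drop the garbled first candidate definition.
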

\begin{proof}
Define $\bar H_{\theta,i}(u) : \mathcal{U} \to \RR$ as \begin{equation}
\label{eq:def:Hbartheta}
    \bar H_{\theta,i}(u) = \limsup_{\gamma \to 0^+} \frac{1}{\gamma} H_{\theta,i}(\gamma u),
\end{equation}
which is well-defined with $\mathcal{U}$ as its domain because for every $u\in\mathcal{U}$ there exists $\gamma > 0$ sufficiently small so that $\norm[\infty,\theta]{\gamma u} \le N$ and hence $\gamma u$ restricted to the interval $[0,\theta]$ can be extended into a function defined on $\EN$.
Moreover, $\bar H_{\theta,i}$ satisfies
\begin{equation}
    \bar H_{\theta,i}(\lambda u) = \limsup_{\gamma \to 0^+} \frac{1}{\gamma} H_{\theta,i}(\gamma \lambda u) = \limsup_{\gamma\lambda \to 0^+} \frac{\lambda}{\gamma \lambda} H_{\theta,i}(\gamma \lambda u) = \lambda \bar H_{\theta,i}(u)
\end{equation}
for all $\lambda > 0$, showing that it is positively homogeneous.
Since $H_{\theta,i}$ is convex according to \cref{lem:Hti:properties}\ref{it:H:convexity}) and $\limsup$ is subadditive, also $\bar H_{\theta,i}$ is convex, and thus subadditive and sublinear.
Finally, since $H_{\theta,i}(0) = 0$ according to \cref{lem:Hti:properties}\ref{it:H:zero}) and due to its convexity, $H_{\theta,i}(\gamma u) \le \gamma H_{\theta,i}(u)$ holds for all $\gamma \in [0,1]$, $u \in \FL^m+\EN$.
Thus, $\bar H_{\theta,i}(u) \le H_{\theta,i}(u)$ for all $u \in \FL^m+\EN$ follows from \eqref{eq:def:Hbartheta}, showing item~\ref{it:Dtheta:bound}).

For item~\ref{it:Dtheta:lowerbound}) consider any $u \in \mathcal{U}$ satisfying $u(\theta-\tau) = 0$ for all $\tau \in \mathcal{T}_{\theta,i}$.
Then,
\begin{align}
\label{eq:Ht:lowerbound}
    H_{\theta,i}(\gamma u) &= \inf_{\substack{h \in \FL^m \\ \|h-\gamma u\|_{\infty,\theta} \le N}} h^{(i)}(\theta) + \gop^{m,L}_{N,i}(\theta)
    \ge \inf_{\substack{h \in \FL^m \\ |h(\theta-\tau)| \le N \\ \forall \tau \in \mathcal{T}_{\theta,i}}} h^{(i)}(\theta) + \gop^{m,L}_{N,i}(\theta) \nonumber \\
    &= \inf_{\substack{h \in \FL^m \\ \|h\|_{\infty,\theta} \le N}} h^{(i)}(\theta) + \gop^{m,L}_{N,i}(\theta) = H_{\theta,i}(0) = 0
\end{align}
follows for all $\gamma \in \RR_{\ge 0}$.
Therein, the inequality is obtained from the fact that $\|h-\gamma u\|_{\infty,\theta} \le N$ implies $|h(\theta-\tau) - \gamma u(\theta-\tau)| = |h(\theta-\tau)| \le N$ for all $\tau \in \mathcal{T}_{\theta,i}$, while the following equality follows by contradiction using an active constraints argument as follows: 
If there exists $h \in \FL^m$ satisfying $|h(\theta-\tau)| \le N$ for all $\tau \in \mathcal{T}_{\theta,i}$ such that $h^{(i)}(\theta) < - \gop^{m,L}_{N,i}(\theta)$, then there exists a convex combination $\bar h$ of $h$ and all $g \in \mathcal{G}^m_{L,N}(\theta)$ with $g^{(i)}(\theta) = - \gop^{m,L}_{N,i}(\theta)$ (which are exactly those used in the definition of $\mathcal{T}_{\theta,i}$ after sign reversal) that satisfies $\|\bar h\|_{\infty,\theta} \le N$, $\bar h \in \FL^m$, $\bar h^{(i)}(\theta) < -\gop^{m,L}_{N,i}(\theta)$, contradicting $H_{\theta,i}(0) = 0$.
Due to \eqref{eq:Ht:lowerbound} holding for all $\gamma \in \RR_{\ge 0}$ and \eqref{eq:def:Hbartheta}, $\bar H_{\theta,i}(u) \ge 0$ holds.

For item~\ref{it:Dtheta:countablesupport}), note that $\mathcal{U}$ is a linear space and consider the linear subspace $\mathcal{V} \subset \mathcal{U}$ spanned by all functions $v \in \mathcal{U}$ satisfying $v(\theta-\tau) = 1$ for exactly one $\tau \in \mathcal{T}_{\theta,i}$ and $v(t) = 0$ for all other $t \in \RR_{\ge 0}$.
Since $\mathcal{T}_{\theta,i}$ contains only finitely many values, $\mathcal{V}$ is finite dimensional.
Thus, $\mathcal{V}$ has a complement $\mathcal{W}$ such that $\mathcal{V} \oplus \mathcal{W} = \mathcal{U}$, and $u \in \mathcal{W}$ if and only if $u(\theta-\tau) = 0$ for all $\tau \in \mathcal{T}_{\theta,i}$.
Now, partially define the linear functional $D_{\theta,i} : \mathcal{U} \to \RR$ as $D_{\theta,i}(u) = 0$ for all $u \in \mathcal{W}$, making item~\ref{it:Dtheta:countablesupport}) true by construction.
Since $\bar H_{\theta,i}$ is sublinear and $0 = D_{\theta,i}(u) \le \bar H_{\theta,i}(u)$ holds for all $u \in \mathcal{W}$ according to item~\ref{it:Dtheta:lowerbound}), the Hahn-Banach dominated extension theorem (see, e.g. \cite[Section~4.2]{kreysi_book78}) allows to extend $D_{\theta,i}$ to $\mathcal{U}$ such that $D_{\theta,i}(u) \le \bar H_{\theta,i}(u)$ holds for all $u \in \mathcal{U}$, thus fully defining the linear functional $D_{\theta,i}$ and showing also item~\ref{it:Dtheta:optimalitybound}).

Item~\ref{it:Dtheta:optimality}) now follows from linearity of $D_{\theta,i}$ and items~\ref{it:Dtheta:optimalitybound}) and~\ref{it:Dtheta:bound}).
Further, note that $H_{\theta,i}(f) = f^{(i)}(\theta)$ holds for all $f \in \F_0^m$ due to items~\ref{it:H:additivity}) and~\ref{it:H:zero}) of Lemma~\ref{lem:Hti:properties}.
As a consequence, $[-H_{\theta,i}(-f),H_{\theta,i}(f)] = \{ f^{(i)}(\theta) \}$, proving also item~\ref{it:Dtheta:exactness}).

Item~\ref{it:Dtheta:robustness}), finally, follows from
\begin{equation}
        |D_{\theta,i}(u)| \le \frac{\|u\|_{\theta,\infty}}{N} \sup_{\substack{\eta \in \mathcal{U} \\ \|\eta\|_{\infty,\theta} \le N}} |\bar H_{\theta,i}(\eta)| = \frac{\|u\|_{\theta,\infty}}{N} \sup_{\eta \in \EN} |\bar H_{\theta,i}(\eta)| \le \frac{\|u\|_{\theta,\infty}}{N}  \gop^{m,L}_{N,i}(\theta)
\end{equation}
due to linearity of $D_{\theta,i}$ and item~\ref{it:H:boundedness}) of Lemma~\ref{lem:Hti:properties}.
This completes the proof.
\end{proof}

\subsubsection{Proof of the Theorem}

We are now able to prove \cref{thm:bestworstcase:main}.
\begin{proof}[Proof of \cref{thm:bestworstcase:main}]
Recall that $[\J_{N,i}^{m,L}u](\theta) = [-H_{\theta,i}(-u), H_{\theta,i}(u)]$ from \cref{rem:Hminus}.
For item~\ref{it:bestworstcase:main:existence}), note that $H_{\theta,i}(u) + H_{\theta,i}(-u) \ge H_{\theta,i}(0) = 0$ due to items~\ref{it:H:zero}) and~\ref{it:H:convexity}) of \cref{lem:Hti:properties}.
Thus, $H_{\theta,i}(u) \ge - H_{\theta,i}(-u)$, i.e., the interval $[\J_{N,i}^{m,L}u](\theta)$ is non-empty.

Item~\ref{it:bestworstcase:main:characterization}) follows from \cref{prop:optimal:causal:characterization}.

For item~\ref{it:bestworstcase:main:construction}), note first that $\Diff$ is well-defined due to item~\ref{it:bestworstcase:main:existence}). Second, the constraints in \eqref{eq:def:Jti} are feasible and thus $[\J_{N,i}^{m,L}u](\theta)$ is bounded whenever $u \in \FL^m + \EN$. Then, $\Diff$ has best worst-case error over $\FL^m$ with respect to noise in $\EN$ due to item~\ref{it:bestworstcase:main:characterization}).
Causality follows from the fact that $H_{t,i}(u)$ depends only on values of $u$ on the time-interval $[0, t]$ according to its definition \eqref{eq:def:Hti}.
To see robustness almost from the beginning over $\FL^m + \E_{N-\delta}$, consider $u_1 \in \FL^m + \E_{N-\delta}$ and $u_2 = u_1 + \eta$ with $\eta \in \E_{\epsilon}$ with $\epsilon \le \frac{\delta}{2}$.
Hence, $u_1, u_2 \in \FL^m + \E_{N-\frac{\delta}{2}}$.
Applying \cref{lem:Hti:properties}, item~\ref{it:H:localLipschitz}) then yields
\begin{equation}
    |H_{t,i}(u_1) - H_{t,i}(u_2)| \le \frac{4 \epsilon}{\delta} \gop_{N,i}^{m,L}(t),
\end{equation}
which implies $Q_{\epsilon,i}^{\FL^m + \E_{N-\delta}}(t) \le \frac{4 \epsilon}{\delta} \gop_{N,i}^{m,L}(t)$ for $\epsilon \in (0,\frac{\delta}{2}]$, since $\gop_{N,i}^{m,L}(t)$ is non-increasing with respect to $t$.
Taking the limit as $\epsilon \to 0^+$ thus yields $Q_i^{\FL^m + \E_{N-\delta}}(t) = 0$, proving the claimed robustness almost from the beginning over $\FL^m + \E_{N-\delta}$.

To show item~\ref{it:bestworstcase:main:linear}), use the linear functional $D_{\theta,i}$ from \cref{lem:linear:diff} to define the linear differentiator
\begin{equation}
    [\Diff_i(u)](t) = \begin{cases}
        D_{t,i}(u) & \text{if $t > 0$} \\
        0 & \text{otherwise}.
    \end{cases}
\end{equation}
This differentiator has the claimed structure due to item~\ref{it:Dtheta:countablesupport}) of \cref{lem:linear:diff}, which shows that $D_{t,i}$ may be written as
\begin{equation}
    D_{t,i}(u) = \sum_{j = 1}^{P_{t,i}} d_{t,i,j} u(t - \tau_{t,i,j})
\end{equation}
with $\{ \tau_{t,i,1}, \ldots, \tau_{t,i,P_{t,i}} \} = \mathcal{T}_{t,i}$ being the set defined in \eqref{eq:set:countablesupport}, which contains only finitely many values for each $t > 0$ due to \cref{prop:gopt:extremal}.
Causality of the differentiator is clear, because $\mathcal{T}_{t,i} \subseteq [0,t]$ by definition.
Robustness almost from the beginning over $\mathcal{U}$ is obtained because 
\begin{equation}
    |D_{t,i}(u_1) - D_{t,i}(u_2)| = |D_{t,i}(u_1 - u_2)| \le \frac{\|u_1 - u_2\|_{\infty,t}}{N} \gop_{N,i}^{m,L}(t)
\end{equation}
for all $t > 0$ due to linearity,  item~\ref{it:Dtheta:robustness}) of \cref{lem:linear:diff},  and the fact that $\gop_{N,i}^{m,L}(t)$ is non-increasing with respect to $t$.
Best worst-case error over $\FL^m$ with respect to noise in $\EN$ is a consequence of item~\ref{it:bestworstcase:main:characterization}) and \cref{lem:linear:diff}, item~\ref{it:Dtheta:optimality}).
Finally, consider $u = f + \eta$ for some $f \in \F_{\alpha L}^m$, $\eta \in \E_{\alpha N}$ with $\alpha \ge 0$. 
If $\alpha = 0$, the fact that $\Diff$ has best worst-case error over $\F_0^m$ with respect to noise in $\E_0$ follows from item~\ref{it:Dtheta:exactness}) of \cref{lem:linear:diff}. If $\alpha>0$, due to linearity and since $\alpha^{-1} u \in \FL^m + \EN$, then
\begin{equation}
    |[\Diff_i u](t) - f^{(i)}(t)| = \alpha| [\Diff_i \alpha^{-1} u](t) - \alpha^{-1} f^{(i)}(t)| \le \alpha \gop_{N,i}^{m,L}(t) = \gop_{\alpha N,i}^{m,\alpha L}(t)
\end{equation}
is obtained using \eqref{eq:gopt-homogeneity}.
Thus, the linear differentiator $\Diff$ also has best worst-case error over $\F_{\alpha L}^m$ with respect to noise in $\E_{\alpha N}$ for all $\alpha \ge 0$, completing the proof.
\end{proof}

\subsection{Existence of Deterministically Optimal Robust Exact Differentiators}

To characterize and study the existence of deterministically optimal differentiators, define the interval
\begin{align}
\label{eq:def:Iti}
    [\I_i^{m,L}u](\theta) &= \bigcap_{\hat N \in \RR_{\ge 0}} \left( [\J_{\hat N,i}^{m,L} u](\theta) + \big[-\gop^{m,2L}_{\hat N,i}(\theta) + \gop^{m,L}_{\hat N,i}(\theta),\  \gop^{m,2L}_{\hat N,i}(\theta) - \gop^{m,L}_{\hat N,i}(\theta)\big] \right) \nonumber \\
    &= \bigcap_{\hat N \in \RR_{\ge 0}} \left[ \sup_{\substack{h \in \FL^m \\ \|h-u\|_{\infty,\theta} \le \hat N}} h^{(i)}(\theta) - \gop^{m,2L}_{\hat N,i}(\theta),\inf_{\substack{h \in \FL^m \\ \|h-u\|_{\infty,\theta} \le \hat N}} h^{(i)}(\theta) + \gop^{m,2L}_{\hat N,i}(\theta) \right]
\end{align}
for $i,m\in\NN$, $L \in \RR_{\ge 0}$, $i \le m$, $u\in\mathcal{U}$ and $\theta \in \RR_{> 0}$.

In this subsection, we will prove the following theorem.
\begin{theorem}
\label{thm:det-optimal:main}
    Let $m \in \NN$, $L \in \RR_{\ge 0}$ and consider the interval $[\I_i^{m,L}u](\theta)$ defined in \eqref{eq:def:Iti} for $i = 1, \ldots, m$.
    Then,
    \begin{enumerate}[a)]
    \item
    \label{it:det-optimal:main:existence}
    the interval $[\mathcal I_{i}^{m,L} u](\theta)$ is nonempty and bounded for all $u \in \mathcal{U}$ and all $\theta \in \RR_{> 0}$;
    \item
    \label{it:det-optimal:main:characterization}
        a causal differentiator $\Diff$ of order $m$ is deterministically optimal over $\FL^m$ if and only if it satisfies $[\Diff_i u](t) \in [\I_i^{m,L}u](t)$ for all $u \in \mathcal{U}$ and all $t > 0$;
    \item 
    \label{it:det-optimal:main:construction}
        the differentiator $\Diff$ of order $m$ defined as
        \begin{equation}
            [\Diff_i u](t) = \begin{cases}
                \frac{\inf [\I_i^{m,L}u](t) + \sup [\I_i^{m,L}u](t)}{2} & \text{if $t > 0$} \\
                0 & \text{otherwise}
            \end{cases}
        \end{equation}
        is causal, deterministically optimal over $\FL^m$, and robust almost from the beginning over $\mathcal{U}$ (i.e., strongly robust).
    \end{enumerate}
\end{theorem}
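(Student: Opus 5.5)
The plan is to write the interval as $[\I_i^{m,L}u](\theta)=\bigcap_{\hat N\ge 0}[\ell_u(\hat N),r_u(\hat N)]$ with
\[
\ell_u(\hat N)=s_u(\hat N)-\gop^{m,2L}_{\hat N,i}(\theta),\qquad r_u(\hat N)=\iota_u(\hat N)+\gop^{m,2L}_{\hat N,i}(\theta),
\]
where $s_u(\hat N)$ and $\iota_u(\hat N)$ denote the supremum and infimum of $h^{(i)}(\theta)$ over $h\in\FL^m$ with $\|h-u\|_{\infty,\theta}\le\hat N$. When this constraint is infeasible, $s_u=-\infty$, $\iota_u=+\infty$, and the corresponding interval is all of $\RR$. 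All three items then reduce to properties of the two families $\ell_u(\cdot)$ and $r_u(\cdot)$.

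For item~\ref{it:det-optimal:main:existence}), I would show that the intervals pairwise intersect, i.e.\ $\ell_u(N_1)\le r_u(N_2)$ for all $N_1,N_2\ge0$. Take $h_1$ feasible for $N_1$ and $h_2$ feasible for $N_2$. Then $h_1-h_2\in\F_{2L}^m$ and $|h_1-h_2|\le N_1+N_2$ on $[0,\theta]$, so $h_1^{(i)}(\theta)-h_2^{(i)}(\theta)\le \gop^{m,2L}_{N_1+N_2,i}(\theta)$. By \eqref{eq:gopt-addnoise} and monotonicity in $L$,
\[
\gop^{m,2L}_{N_1+N_2,i}(\theta)\le \gop^{m,2L}_{N_1,i}(\theta)+\gop^{m,L}_{N_2,i}(\theta)\le \gop^{m,2L}_{N_1,i}(\theta)+\gop^{m,2L}_{N_2,i}(\theta).
\]
Taking the supremum over $h_1$ and the infimum over $h_2$ gives the claim. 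For a family of closed intervals in $\RR$, pairwise intersection is equivalent to $\sup_{\hat N}\ell_u(\hat N)\le\inf_{\hat N} r_u(\hat N)$, so the intersection is nonempty. For boundedness, use that every $u\in\mathcal U$ is locally bounded. With $\hat N=\|u\|_{\infty,\theta}<\infty$, the function $h=0$ is feasible, so $s_u(\hat N)$ and $\iota_u(\hat N)$ are finite by \cref{lem:G:der:bounded} (equivalently by $|h^{(i)}(\theta)|\le\gop^{m,L}_{2\hat N,i}(\theta)$). Hence that single interval is already bounded.

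Item~\ref{it:det-optimal:main:characterization}) will follow the proof of \cref{prop:optimal:causal:characterization}, combined with \cref{prop:optimal:exact:equivdef}, which reduces deterministic optimality to $M^{\FL^m}_{N,k}(t)\le\gop^{m,2L}_{N,k}(t)$ for all $N$.
\begin{itemize}
\item Suppose $[\Diff_i u](t)\in[\I_i^{m,L}u](t)$, and let $u=f+\eta$ with $f\in\FL^m$ and $\eta\in\EN$. Then $f$ is feasible for $\hat N=N$, so $|[\Diff_i u](t)-f^{(i)}(t)|\le\gop^{m,2L}_{N,i}(t)$. Since $\gop$ is nonincreasing in $t$, this controls $\sup_{\tau\ge t}$.
\item Conversely, if $[\Diff_i u](t)>r_u(\hat N)$ for some $\hat N$, choose a feasible $h$ with $[\Diff_i u](t)>h^{(i)}(t)+\gop^{m,2L}_{\hat N,i}(t)$. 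Then $u=h+(u-h)$ with $u-h\in\E_{\hat N}$ on $[0,t]$; by causality we may modify $u-h$ after $t$ to lie in $\E_{\hat N}$. This contradicts the error bound, and the case of the left endpoint is symmetric.
\end{itemize}

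For item~\ref{it:det-optimal:main:construction}), the midpoint is well defined by item~a) and lies in the interval, so deterministic optimality follows from item~b). Causality holds because $s_u$ and $\iota_u$ depend only on $u|_{[0,t]}$. The main step, which I expect to be the crux, is strong robustness. I would prove Lipschitz-type estimates for the endpoints $\ell(u)=\sup_{\hat N}\ell_u(\hat N)$ and $r(u)=\inf_{\hat N}r_u(\hat N)$ under a perturbation $\|\eta\|_{\infty,t}\le\epsilon$.
\begin{itemize}
\item Feasibility for $u$ with level $N'$ implies feasibility for $u+\eta$ with level $N'+\epsilon$. Hence $s_{u+\eta}(N'+\epsilon)\ge s_u(N')$.
\item By \eqref{eq:gopt-addnoise}, $\gop^{m,2L}_{N'+\epsilon,i}\le\gop^{m,2L}_{N',i}+\gop^{m,L}_{\epsilon,i}$.
\item Combining, $\ell(u+\eta)\ge\sup_{N'}\ell_u(N')-\gop^{m,L}_{\epsilon,i}(t)=\ell(u)-\gop^{m,L}_{\epsilon,i}(t)$. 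By symmetry $|\ell(u+\eta)-\ell(u)|\le\gop^{m,L}_{\epsilon,i}(t)$, and likewise for $r$.
\end{itemize}
The midpoint therefore moves by at most $\gop^{m,L}_{\epsilon,i}(t)$, and this bound is uniform over all $u\in\mathcal U$ and all $\tau\ge t$ because $\gop$ is nonincreasing in time. Finally, \eqref{eq:gopt-scaling} with $\beta=\epsilon/N\le1$, together with monotonicity in $T$, gives $\gop^{m,L}_{\epsilon,i}(t)\le(\epsilon/N)^{\frac{m+1-i}{m+1}}\gop^{m,L}_{N,i}(t)\to0$ as $\epsilon\to0^+$ for $L>0$. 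For $L=0$ the bound is directly linear in $\epsilon$ by \eqref{eq:gopt-homogeneity}. This yields $\Q^{\mathcal U}(t)=\zero$ for all $t>0$.
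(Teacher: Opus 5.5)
Your route is essentially the paper's. The pairwise-intersection inequality in a) is the content of \cref{lem:Gti:properties}\ref{it:G:minlength}); the paper packages it in \cref{prop:Iti:length}, which additionally gives a positive lower bound on the length. Part b) is \cref{prop:optimal:exact:characterization}, and the perturbation-of-level argument in c) is \cref{lem:Gti:properties}\ref{it:G:robustness}). Two of your details fill in points the paper leaves implicit: using causality to redefine $u-h$ after $t$, and the explicit rate $\gop^{m,L}_{\epsilon,i}(t)\le(\epsilon/N)^{\frac{m+1-i}{m+1}}\gop^{m,L}_{N,i}(t)$.

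However, one step in c) fails as written. The bound \eqref{eq:gopt-addnoise} is proved only for $M\in[0,N]$, since its proof needs $\gamma_2=M/(N+M)\le\frac12$. Your inequality $\gop^{m,2L}_{N'+\epsilon,i}\le\gop^{m,2L}_{N',i}+\gop^{m,L}_{\epsilon,i}$ is in fact false for $N'<\epsilon$. At $N'=0$ it reads $\gop^{m,2L}_{\epsilon,i}(t)\le\gop^{m,L}_{\epsilon,i}(t)$, which contradicts \cref{lem:psi:bound} whenever $L,\epsilon>0$. This matters because the supremum defining $\ell(u)$ may be attained at small levels: for $u\in\FL^m$ one has $\ell(u)=\ell_u(0)=u^{(i)}(t)$. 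So your Lipschitz constant $\gop^{m,L}_{\epsilon,i}(t)$ is not established.

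The fix is to apply \eqref{eq:gopt-addnoise} with the smaller of $N'$ and $\epsilon$ in the role of $M$, and then use monotonicity in $L$. This gives $\gop^{m,2L}_{N'+\epsilon,i}\le\gop^{m,2L}_{N',i}+\gop^{m,2L}_{\epsilon,i}$ for every $N'\ge0$. It yields $|\ell(u+\eta)-\ell(u)|\le\gop^{m,2L}_{\epsilon,i}(t)$, and likewise for $r$, which is exactly the paper's constant. Your scaling argument, applied with $2L$ in place of $L$, shows this still vanishes as $\epsilon\to0^+$. The middle inequality in your item a) needs the same reordering when $N_2>N_1$, but there the final symmetric bound $\gop^{m,2L}_{N_1,i}+\gop^{m,2L}_{N_2,i}$ is unaffected.
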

\begin{remark}
    Note that the differentiator constructed in item~\ref{it:det-optimal:main:construction}) is \emph{strongly robust} almost from the beginning and thus has a much stronger robustness property than the robustness almost from the beginning over $\FL^m$ featured by all deterministically optimal differentiators according to \cref{prop:optimal:properties}, item~\ref{it:det-optimal:robustness}).
    Indeed, also the deterministically optimal differentiator of order $m = 1$ proposed in \cite[Section~5]{seehai_auto23} (cf. also \cite[Lemma~1]{aldsee_tac25}) is robust, but not strongly robust, almost from the beginning.
\end{remark}

\subsubsection{Technical Results}

In order to eventually show the properties of the interval $[\I_i^{m,L} u](\theta)$ required for proving \cref{thm:det-optimal:main}, define, for each $\theta \in \RR_{> 0}$ and $i = 1, \ldots, m$, the functional $G_{\theta,i} : \RRge \times \mathcal{U} \to \RR \cup \{\pm \infty\}$ as
\begin{equation}
    \label{eq:def:Gti}
        G_{\theta,i}(\hat N, u) = \inf_{\substack{h \in \FL^m \\ \|h-u\|_{\infty,\theta} \le \hat N}} h^{(i)}(\theta) + \gop^{m,2L}_{\hat N,i}(\theta),
    \end{equation}
where $G_{\theta,i}(\hat N, u) = \infty$ if the constraints are infeasible.
\begin{remark}
\label{rem:Iti}
    Note that, for given $L \in \RR_{\ge 0}$ and $m \in \NN$,
    \begin{equation}
        [\I_i^{m,L} u](\theta) = \bigcap_{\hat N \in \RR_{\ge 0}} [-G_{\theta,i}(\hat N, -u), G_{\theta,i}(\hat N, u)]
    \end{equation}
    may be written with the  functional thus defined (cf. also \cref{rem:Hminus}).
\end{remark}

\begin{proposition}
\label{prop:optimal:exact:characterization}
Let $L \in \RR_{\ge 0}$, $m \in \NN$ and consider for each $\theta \in \RR_{> 0}$ the functionals $G_{\theta,i}$ defined in \eqref{eq:def:Gti}.
Then, the following statements are equivalent:
\begin{enumerate}[a)]
\item
$\Diff$ is deterministically optimal over $\FL^m$;
\item
\label{it:optimalexact:interval}
$\Diff$ satisfies $[\Diff_i u](t) \in [-G_{\theta,i}(\hat N, -u), G_{\theta,i}(\hat N, u)]$ for all $\hat N \ge 0$, $t > 0$ and $i \in \{1, \ldots, m\}$.
\end{enumerate}
\end{proposition}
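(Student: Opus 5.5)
The plan mirrors the proof of \cref{prop:optimal:causal:characterization}, now with $\hat N$ ranging over all of $\RR_{\ge 0}$ and with $\gop^{m,2L}$ in place of $\gop^{m,L}$. By \cref{prop:optimal:exact:equivdef}, deterministic optimality over $\FL^m$ is equivalent to
\[
|[\Diff_i(f+\eta)](t) - f^{(i)}(t)| \le \gop^{m,2L}_{N,i}(t)
\]
for all $N \ge 0$, $f \in \FL^m$, $\eta \in \EN$ and $t>0$. This holds because $M_{N,i}^{\FL^m}(t)$ is a supremum over $\tau \ge t$, and $\gop^{m,2L}_{N,i}$ is non-increasing in $t$ by \cref{lem:gopt-limit}\ref{it:gopt-limit:nonincreasing}), so a pointwise bound at each time suffices. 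Both directions will therefore be proved using this pointwise condition. Here b) is read with $\theta = t$ and for all $u\in\mathcal U$, with the convention $G_{t,i}(\hat N,\pm u) = \infty$ when the constraints are infeasible, so that the interval is then all of $\RR$.

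For a) $\Rightarrow$ b), fix $u \in \mathcal U$, $\hat N \ge 0$, $t>0$, and assume first, to the contrary, that $[\Diff_i u](t) > G_{t,i}(\hat N,u)$. This forces $G_{t,i}(\hat N,u)<\infty$, so there is some $h \in \FL^m$ with $\|h-u\|_{\infty,t}\le \hat N$ and $[\Diff_i u](t) > h^{(i)}(t) + \gop^{m,2L}_{\hat N,i}(t)$. Define $\eta = u-h$ on $[0,t]$ and $\eta = 0$ on $(t,\infty)$, so that $\eta\in\E_{\hat N}$, and set $\tilde u = h+\eta$. Then $\tilde u$ agrees with $u$ on $[0,t]$, and causality gives $[\Diff_i \tilde u](t) = [\Diff_i u](t)$. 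This yields $[\Diff_i\tilde u](t) - h^{(i)}(t) > \gop^{m,2L}_{\hat N,i}(t)$, contradicting the pointwise bound. The symmetric case $[\Diff_i u](t) < -G_{t,i}(\hat N,-u)$ is handled the same way, since $-G_{t,i}(\hat N,-u) = \sup_h h^{(i)}(t) - \gop^{m,2L}_{\hat N,i}(t)$, as in \cref{rem:Hminus}.

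For b) $\Rightarrow$ a), take $u = f+\eta$ with $f\in\FL^m$, $\eta\in\E_N$, and choose $\hat N = N$. Then $h=f$ is feasible in \eqref{eq:def:Gti}, so $G_{t,i}(N,u) \le f^{(i)}(t) + \gop^{m,2L}_{N,i}(t)$ and $-G_{t,i}(N,-u) \ge f^{(i)}(t) - \gop^{m,2L}_{N,i}(t)$. Membership of $[\Diff_i u](t)$ in the interval then gives the pointwise bound, and taking the supremum over $\tau\ge t$, together with monotonicity in $t$, yields \eqref{eq:def:optimal:exact}. \cref{prop:optimal:exact:equivdef} then concludes deterministic optimality.

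I expect no serious obstacle. The only delicate point is the bookkeeping in a) $\Rightarrow$ b): the input $u$ is an arbitrary element of $\mathcal U$, not necessarily of the form $f+\eta$ with $\eta \in \E_{\hat N}$ globally, so the noise has to be redefined after time $t$ and causality has to be invoked. One must also treat the infeasible case, where the interval is all of $\RR$ and the condition holds trivially.
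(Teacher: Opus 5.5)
Your proof is correct and takes essentially the same route as the paper. For a) $\Rightarrow$ b) you argue by contradiction with a feasible $h$ nearly attaining the infimum, and for b) $\Rightarrow$ a) you use feasibility of $h=f$ with $\hat N = N$; both directions reduce to the pointwise bound from \cref{prop:optimal:exact:equivdef} together with monotonicity in $t$. Two details of yours make the argument more careful than the paper's, which glosses them: you redefine $\eta$ after time $t$ and invoke causality (since $\|h-u\|_{\infty,t}\le\hat N$ only constrains $[0,t]$), and you correctly write $\gop^{m,2L}_{\hat N,i}$ where the paper writes $\gop^{m,2L}_{N,i}$.
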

\begin{proof}
To prove that a) implies b), suppose to the contrary that there exist $t$, $i$, and $\hat N$ such that, without restricting generality, $[\Diff_i u](t) > G_{t,i}(\hat N, u)$ holds.
    From the definition of $G_{t,i}$, there then exists $h \in \FL^m, \eta \in \E_{\hat N}$ such that $u = h + \eta$ and $[\Diff_i u](t) > h^{(i)}(t) + \gop^{m,2L}_{N,i}(t)$, contradicting $|[\Diff_i u](t) - h^{(i)}(t)| \le \gop^{m,2L}_{N,i}(t)$ from \cref{prop:optimal:exact:equivdef}.

    To show that b) implies a), consider $u = f + \eta$ with $f \in \FL^m$, $\eta \in \EN$.
    Then, $G_{t,i}(N,u) \le f^{(i)}(t) + \gop^{m,2L}_{N,i}(t)$ since $h = f$ is feasible in the definition \eqref{eq:def:Gti}, and hence $-G_{t,i}(N,-u) \ge f^{(i)}(t) - \gop^{m,2L}_{N,i}(t)$.
    It follows that $|[\Diff_i u](t) - f^{(i)}(t)| \le \gop^{m,2L}_{N,i}(t)$ holds for all $t > 0$, and the claim is obtained from \cref{prop:optimal:exact:equivdef} and the fact that $\gop^{m,2L}_{N,i}(t)$ is non-increasing with respect to $t$.
\end{proof}

The following lemma will be instrumental in showing that the constructed differentiator exists and is strongly robust almost from the beginning. \begin{lemma}
\label{lem:Gti:properties}
    Let $L,N, \epsilon \in \RR_{\ge 0}$, $\theta \in \RR_{> 0}$, $N' \in [N,\infty)$, $m, i \in \NN$, $i \le m$ and consider $G_{\theta,i}$ as defined in \eqref{eq:def:Gti}.
    Then,
    \begin{enumerate}[a)]
    \item 
    \label{it:G:minlength}
    $G_{\theta,i}(N', u) + G_{\theta,i}(N, -u) \ge \gop_{N,i}^{m,2L}(\theta) - \gop_{N,i}^{m,L}(\theta)$ for all $u \in \FL^m + \EN$;
    \item
    \label{it:G:boundedness}
        $|G_{\theta,i}(N,u)| \le 2 \gop^{m,L}_{N,i}(\theta) + \gop^{m,2L}_{N,i}(\theta)$ whenever $\|u\|_{\infty,\theta} \le N$;
\item 
    \label{it:G:robustness}
$ \left|\inf_{\hat N \ge 0} G_{\theta,i}(\hat N,u_1) - \inf_{\hat N \ge 0} G_{\theta,i}(\hat N,u_2)\right| \le   \gop_{\epsilon,i}^{m,2L}(\theta)$ for all $u_1, u_2 \in \mathcal{U}$ satisfying $\|u_1 - u_2\|_{\infty,\theta} \le \epsilon$.
    \end{enumerate}
\end{lemma}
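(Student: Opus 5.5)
The three items are handled separately. Each uses only the scaling and subadditivity properties of $\gop$ from \cref{lem:gopt-scaling} and the definition \eqref{eq:def:Gti}. The central tool is \eqref{eq:gopt-addnoise} applied with $2L$ in place of $L$:
\begin{equation*}
\gop^{m,2L}_{N_1+N_2,i}(\theta) \le \gop^{m,2L}_{N_1,i}(\theta) + \gop^{m,L}_{N_2,i}(\theta) \le \gop^{m,2L}_{N_1,i}(\theta) + \gop^{m,2L}_{N_2,i}(\theta),
\end{equation*}
where the second inequality uses monotonicity in $L$.

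For item~\ref{it:G:minlength}), if either constraint set in \eqref{eq:def:Gti} is infeasible, the left-hand side is $+\infty$ and there is nothing to show. Otherwise take arbitrary $h_1,h_2\in\FL^m$ with $\|h_1-u\|_{\infty,\theta}\le N'$ and $\|h_2+u\|_{\infty,\theta}\le N$. Then $g=h_1+h_2\in\F_{2L}^m$ and $\|g\|_{\infty,\theta}\le N+N'$. Using $-g$ in the definition of $\gop$ (with $-g\in\mathcal G^m_{2L,N+N'}(\theta)$) gives
\begin{equation*}
h_1^{(i)}(\theta)+h_2^{(i)}(\theta)\ge -\gop^{m,2L}_{N'+N,i}(\theta)\ge -\gop^{m,2L}_{N',i}(\theta)-\gop^{m,L}_{N,i}(\theta).
\end{equation*}
Taking infima over $h_1,h_2$ and adding $\gop^{m,2L}_{N',i}(\theta)+\gop^{m,2L}_{N,i}(\theta)$ yields the claim. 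Note that $f$ in $u=f+\eta$ is not needed here beyond guaranteeing feasibility.

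For item~\ref{it:G:boundedness}), the upper bound follows because $h=0\in\FL^m$ is feasible when $\|u\|_{\infty,\theta}\le N$, so $G_{\theta,i}(N,u)\le\gop^{m,2L}_{N,i}(\theta)$. For the lower bound, every feasible $h$ satisfies $\|h\|_{\infty,\theta}\le 2N$, hence $h^{(i)}(\theta)\ge-\gop^{m,L}_{2N,i}(\theta)$. By \eqref{eq:gopt-addnoise} and monotonicity in $L$, $\gop^{m,L}_{2N,i}\le\gop^{m,L}_{N,i}+\gop^{m,L/2}_{N,i}\le 2\gop^{m,L}_{N,i}$. Combining the two bounds gives the stated estimate, which is loose but suffices.

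For item~\ref{it:G:robustness}), fix $\hat N\ge0$ and take any $h\in\FL^m$ with $\|h-u_1\|_{\infty,\theta}\le\hat N$. Such $h$ is feasible for $u_2$ with level $\hat N+\epsilon$. Hence
\begin{equation*}
G_{\theta,i}(\hat N+\epsilon,u_2)\le G_{\theta,i}(\hat N,u_1)-\gop^{m,2L}_{\hat N,i}(\theta)+\gop^{m,2L}_{\hat N+\epsilon,i}(\theta)\le G_{\theta,i}(\hat N,u_1)+\gop^{m,2L}_{\epsilon,i}(\theta),
\end{equation*}
using the displayed subadditivity. Taking the infimum over $\hat N$ on both sides gives $\inf_{\hat N}G_{\theta,i}(\hat N,u_2)\le\inf_{\hat N}G_{\theta,i}(\hat N,u_1)+\gop^{m,2L}_{\epsilon,i}(\theta)$, and exchanging the roles of $u_1,u_2$ gives the absolute value bound.

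The main obstacle is making sure these infima are finite, so that the subtraction in item~\ref{it:G:robustness}) makes sense. Since $u\in\mathcal U$ is locally bounded, $h=0$ is feasible at $\hat N=\sup_{[0,\theta]}|u|$, so each infimum is $<\infty$. Finiteness from below requires an argument (for instance via item~\ref{it:G:minlength}) applied to $-u$, whose relevant infimum is also finite). If it is unavailable, I would state the inequality with the convention that both sides are $-\infty$ simultaneously, which the one-sided bounds above already guarantee.
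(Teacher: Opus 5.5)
Your proof is correct and follows the paper's. Items a) and c) use the same arguments: bounding $(h_1+h_2)^{(i)}(\theta)$ for $h_1+h_2\in\F_{2L}^m$ via \eqref{eq:gopt-addnoise}, and shifting $\hat N\mapsto\hat N+\epsilon$ together with subadditivity of $\gop^{m,2L}_{\cdot,i}$. For b), the paper rewrites $G_{\theta,i}(N,u)=H_{\theta,i}(u)-\gop^{m,L}_{N,i}(\theta)+\gop^{m,2L}_{N,i}(\theta)$ and cites \cref{lem:Hti:properties}\ref{it:H:boundedness}), whereas you bound $G_{\theta,i}(N,u)$ directly; both give the same interval.

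The finiteness issue you raise for c) is left implicit in the paper's proof. It is resolved as you suggest, by item a) applied to $u$ and to $-u$, which gives $G_{\theta,i}(\hat N,u)\ge -G_{\theta,i}(N_0,-u)>-\infty$ for all $\hat N\ge 0$, with $N_0=\|u\|_{\infty,\theta}$.
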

\begin{proof}
    For item~\ref{it:G:minlength}), with the purpose of provoking a contradiction, assume that $G_{\theta,i}( N', u) + G_{\theta,i}( N,-u) < \gop_{N,i}^{m,2L}(\theta) - \gop_{N,i}^{m,L}(\theta)$.
    Then, $h_1, h_2 \in \FL^m$ exist satisfying $\|h_1 - u\|_{\infty,\theta} \le  N'$, $\|h_2 + u\|_{\infty,\theta} \le  N$ such that 
\begin{equation}
        h_1^{(i)}(\theta) + \gop_{ N',i}^{m,2L}(\theta) + h_2^{(i)}(\theta) + \gop_{N,i}^{m,2L}(\theta) < \gop_{N,i}^{m,2L}(\theta) - \gop_{N,i}^{m,L}(\theta),
    \end{equation}
    i.e., $h_1^{(i)}(\theta) + h_2^{(i)}(\theta) < - \gop_{ N',i}^{m,2L}(\theta) - \gop_{N,i}^{m,L}(\theta)$.
    Consider the function $g = h_1 + h_2 \in \F_{2L}^m$.
    This function fulfills 
    $
    \|g\|_{\infty,\theta} \le \|h_1 - u\|_{\infty,\theta} + \|h_2 + u\|_{\infty,\theta} \le  N' +  N
    $
    Applying \eqref{eq:gopt-addnoise} then yields the contradiction
    \begin{equation}
        h_1^{(i)}(\theta) + h_2^{(i)}(\theta) = g^{(i)}(\theta) \ge -\gop_{N' +  N,i}^{m,2L}(\theta) \ge -\gop_{ N',i}^{m,2L}(\theta) - \gop_{N,i}^{m,L}(\theta),
    \end{equation}
    since $g \in \mathcal{G}_{2L,N'+N}^m(\theta)$ holds and it thus satisfies $g^{(i)}(\theta) \ge -\gop_{N' +  N,i}^{m,2L}(\theta)$ by \eqref{eq:def:gopt} and symmetry with respect to sign reversal.
    This completes the proof of item~\ref{it:G:minlength}).

    Item~\ref{it:G:boundedness}) follows from the fact that $G_{\theta,i}(N,u) = H_{\theta,i}(u) - \gop_{N,i}^{m,L}(\theta) + \gop_{N,i}^{m,2L}(\theta)$ and by using item~\ref{it:H:boundedness}) of \cref{lem:Hti:properties}.

    To show item~\ref{it:G:robustness}), consider any $u_1, u_2 \in \mathcal{U}$ satisfying $\|u_1 - u_2\|_{\infty,\theta} \le \epsilon$.
    Then,
    \begin{align}
        G_{\theta,i}(\hat N, u_1) &= \inf_{\substack{h \in \FL^m \\ \|h-u_1\|_{\infty,\theta} \le \hat N}} h^{(i)}(\theta) + \gop^{m,2L}_{\hat N,i}(\theta) 
        \ge \inf_{\substack{h \in \FL^m \\ \|h-u_2\|_{\infty,\theta} \le \hat N+\epsilon}} h^{(i)}(\theta) + \gop^{m,2L}_{\hat N,i}(\theta) \nonumber \\
        &= G_{\theta,i}(\hat N+\epsilon, u_2) - \gop_{\hat N+\epsilon,i}^{m,2L}(\theta) + \gop^{m,2L}_{\hat N,i}(\theta)
    \end{align}
    follows from the fact that $\|h - u_1\|_{\infty,\theta} \le \hat N$ implies $\|h-u_2\|_{\infty,\theta} \le \hat N + \epsilon$.
    Hence,
    \begin{align}
        \inf_{\hat N \in [0, \infty)} G_{\theta,i}(\hat N, u_1)
        &\ge \inf_{\hat N \in [0, \infty)} \left[ G_{\theta,i}(\hat N+\epsilon, u_2) - \gop_{\hat N+\epsilon,i}^{m,2L}(\theta) + \gop^{m,2L}_{\hat N,i}(\theta) \right] \nonumber \\
        &\ge \inf_{\hat N \in [\epsilon, \infty)} G_{\theta,i}(\hat N, u_2) - \sup_{\hat N \in [0, \infty)} \left[ \gop_{\hat N+\epsilon,i}^{m,2L}(\theta) - \gop^{m,2L}_{\hat N,i}(\theta) \right] \nonumber \\
        &\ge \inf_{\hat N \in [0, \infty)} G_{\theta,i}(\hat N, u_2) - \gop_{\epsilon,i}^{m,2L}(\theta),
    \end{align}
    using $\gop_{\hat N + \epsilon,i}^{m,2L}(\theta) \le \gop_{\hat N,i}^{m,2L}(\theta) + \gop_{\epsilon,i}^{m,2L}(\theta)$ according to \eqref{eq:gopt-addnoise} and the fact that $\gop_{\epsilon,i}^{m,L}$ is nondecreasing with respect to $L$.
    Interchanging $u_1$ and $u_2$ furthermore yields also $\inf_{\hat N \ge 0} G_{\theta,i}(\hat N, u_2) \ge \inf_{\hat N \ge 0} G_{\theta,i}(\hat N, u_1) - \gop_{\epsilon,i}^{m,2L}(\theta)$, thus proving the claim.
\end{proof}

For $L,N\in\RRge$, $k,m\in \NN$, $k\le m$, define the function $\psi_{N,k}^{m,L} : \RR_{> 0} \to \RR_{\ge 0}$ as
\begin{equation}
    \psi_{N,k}^{m,L}(\theta) = \begin{cases}
    L \frac{2^{\frac{k}{m+1}}-1}{(m+1-k)!}\left( \frac{1}{\theta} + \frac{\gop_{N,1}^{m,L}(\theta)}{N}  \right)^{- (1 + m - k)} & \text{if $N > 0$} \\
    0 & \text{if $N = 0$}.
    \end{cases}
\end{equation}
The next lemma and the following proposition show how this function may be used to bound the length of the interval $[\mathcal{I}_{k}^{m,L}u](\theta)$ from below.
\begin{lemma}
\label{lem:psi:bound}
    Let $L \in \RR_{\ge 0}$, $T \in \RR_{> 0}$, $m \in \NN$. Let $k\in \{1,\ldots,m\}$.
    Then,
    \begin{equation}
    \label{eq:gopbound:2L:L}
        \gop_{N,k}^{m,2L}(T) - \gop_{N,k}^{m,L}(T) \ge \psi_{N,k}^{m,L}(T)
    \end{equation}
    holds for all $N \in \RR_{\ge 0}$ and $\psi_{N,k}^{m,L}(T)$ is non-decreasing with respect to $N$.
\end{lemma}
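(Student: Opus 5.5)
The plan is to treat the monotonicity claim and the inequality \eqref{eq:gopbound:2L:L} separately, and to settle the degenerate cases first. If $N=0$, then $\psi_{N,k}^{m,L}=0$, and the left-hand side equals $0-0=0$, so \eqref{eq:gopbound:2L:L} holds trivially. If $L=0$, then $\psi_{N,k}^{m,L}=0\le \gop^{m,0}_{N,k}(T)-\gop^{m,0}_{N,k}(T)$, which is trivial too. Hence assume $L,N>0$ from now on.

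\emph{Monotonicity in $N$.} Since $L\,\frac{2^{k/(m+1)}-1}{(m+1-k)!}$ is a positive constant and the exponent $-(1+m-k)$ is negative, it suffices to show that $N\mapsto \gop_{N,1}^{m,L}(T)/N$ is non-increasing on $(0,\infty)$. Let $n=m+1$, $\beta\ge 1$ and $\gamma=\beta^{-1/n}\in(0,1]$. Relation \eqref{eq:gopt-scaling} with $\alpha=1$ gives
\begin{equation*}
\gop_{\beta N,1}^{m,L}(T)=\beta^{\frac{n-1}{n}}\gop_{N,1}^{m,L}(\gamma T),
\end{equation*}
and therefore $\gop_{\beta N,1}^{m,L}(T)/(\beta N)=\gamma\,\gop^{m,L}_{N,1}(\gamma T)/N$. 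By \eqref{eq:gopt-timescaling} with $k=1$, this is at most $\gop^{m,L}_{N,1}(T)/N$. This also covers the transition from $N=0$, because $\psi\ge 0$.

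\emph{Main inequality.} By \cref{prop:gopt:extremal}, pick an extremal $g\in\mathcal{G}^m_{L,N}(T)$ with $g^{(k)}(T)=\gop^{m,L}_{N,k}(T)$ and $g(T)=N$. I will build a competitor $h\in\mathcal{G}^m_{2L,N}(T)$ whose $k$-th derivative at $T$ exceeds that of $g$ by at least $\psi_{N,k}^{m,L}(T)$.

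Introduce the time scale
\begin{equation*}
s=\Big(\tfrac1T+\tfrac{\gop^{m,L}_{N,1}(T)}{N}\Big)^{-1}.
\end{equation*}
It satisfies $s\le T$ and $s\,\gop^{m,L}_{N,1}(T)\le N$. By \cref{lem:G:der:bounded}, together with the time-reversal symmetry of \eqref{eq:def:gopt}, we have $|g^{(1)}|\le \gop^{m,L}_{N,1}(\cdot)$ along the interval. So on a window of length comparable to $s$ before $T$, the function $g$ cannot swing by more than about $N$.

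The candidate is a time-compressed copy of $g$, glued to $g$ itself. With $\sigma=2^{1/n}$, set $\tilde g(t)=g(T-\sigma(T-t))$. This function has $(m+1)$-th derivative bounded by $2L$, satisfies $\tilde g^{(k)}(T)=2^{k/n}g^{(k)}(T)$, and obeys $|\tilde g|\le N$ on the relevant subinterval. I would use $\tilde g$ only on the last window of length $s$. Earlier, I would revert to a convex/spline-type blend with $g$, so that $h=g+w$ where $w^{(m+1)}$ takes values in $[-L,L]$ and is supported in $[T-s,T]$. Then $h\in\F^m_{2L}$ automatically.

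The gain is
\begin{equation*}
h^{(k)}(T)-g^{(k)}(T)=w^{(k)}(T)\ge (2^{k/n}-1)\,L\,\frac{s^{n-k}}{(n-k)!},
\end{equation*}
which is exactly $\psi_{N,k}^{m,L}(T)$. This should come from integrating a bang–bang $w^{(m+1)}$ of amplitude $L$ over the window $n-k$ times. The factor $2^{k/n}-1$ is suggested by comparing $\tilde g^{(k)}(T)$ with $g^{(k)}(T)$ after the rescaling.

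\emph{Main obstacle.} The hard step is proving $|h|\le N$ on $[T-s,T]$, given that $g(T)=N$ is active at the endpoint. The perturbation $w$ must be chosen with the sign that pushes $h$ downward near $T$ (for instance $w(T)\le 0$), while still increasing $h^{(k)}(T)$. I would first try to use the one-sided information $g\le N$ near $T$ and the slope bound $s\,\gop^{m,L}_{N,1}\le N$ to absorb $\|w\|_\infty$. Failing that, I would fall back on the pure compression $\tilde g$ and extend it to $[0,T-s]$ using the backward extension argument from the proof of \cref{lem:gopt-limit}.
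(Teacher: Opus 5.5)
Your degenerate cases and your monotonicity argument are correct. Showing that $N\mapsto\gop_{N,1}^{m,L}(T)/N$ is non-increasing via \eqref{eq:gopt-scaling} and \eqref{eq:gopt-timescaling} is an equivalent alternative to the paper's route, which rewrites $\psi_{N,k}^{m,L}(T)$ with \eqref{eq:gopt-homogeneity} and uses monotonicity in $L$.

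The main inequality, however, is not proved. You never fix $w$, and the step you flag as the main obstacle, $|g+w|\le N$ on $[T-s,T]$, is exactly where the proof lives. The compressed copy $\tilde g$ does not help. Putting $h=\tilde g$ on the window is incompatible with $h=g+w$, where $|w^{(m+1)}|\le L$ and $w$ vanishes to order $m$ at $T-s$. Moreover, the gain $\tilde g$ produces, $(2^{k/n}-1)g^{(k)}(T)$, is not the quantity $w^{(k)}(T)$ you then claim. Your fallback also fails. The backward extension in the proof of \cref{lem:gopt-limit} works only after shrinking by a factor $\lambda$, which destroys the gain, and in general no extension exists. For instance, for $m=k=1$ and $T<2\sqrt{N/L}$, the unique extremal is $g(t)=-N+at+\frac{L}{2}t^2$ with $a=\frac{2N}{T}-\frac{LT}{2}>0$. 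A short Taylor estimate shows that every backward extension of $\tilde g$ from $T-s$ with $|\ddot h|\le 2L$ drops below $-N$.

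Three ingredients are missing.

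\emph{First}, the explicit perturbation
\begin{equation*}
w(t)=-\frac{L}{n!}(t-T+s)_+^n+\frac{2L}{n!}\bigl(t-T+2^{-1/n}s\bigr)_+^n .
\end{equation*}
It satisfies $|w^{(m+1)}|\le L$, $w(T)=0$, and $w^{(k)}(T)=(2^{k/n}-1)Ls^{n-k}/(n-k)!$. Crucially, $-Ls^n/n!\le w\le 0$ on all of $[T-s,T]$; with $x=t-T+s$, the upper bound reduces to $(2^{1/n}-1)x\le(2^{1/n}-1)s$. Hence $h\le g\le N$ is automatic and only the lower bound remains.

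\emph{Second}, you need $Ls^n/n!\le N$, which you never address. Suppose $L\tilde s^n/n!=N$ for some $\tilde s<s$. Then the shifted power $\frac{L}{n!}(t-T+\tilde s)_+^n$ lies in $\mathcal{G}_{L,N}^m(T)$ with first derivative $nN/\tilde s$ at $T$. This gives $s\le N/\gop_{N,1}^{m,L}(T)<\tilde s$, a contradiction.

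\emph{Third}, you need $g\ge 0$ on $[T-s,T]$. The bound $s\,\gop_{N,1}^{m,L}(T)\le N$ does not give this, since $|\dot g(r)|$ may exceed $\gop_{N,1}^{m,L}(T)$ for $r<T$. Instead use $|\dot g(r)|\le\gop_{N,1}^{m,L}(r)\le\frac{T}{r}\gop_{N,1}^{m,L}(T)$. The first inequality follows directly from the definition applied to $g$ on $[0,r]$, not from \cref{lem:G:der:bounded}; the second follows from \eqref{eq:gopt-timescaling}. Integrating gives $g(t)\ge N-\frac{Ts}{T-s}\gop_{N,1}^{m,L}(T)=0$, which is exactly why $s$ contains the term $1/T$.

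Together these give $-N\le g-N\le g+w\le g\le N$ on the window. So $h=g+w\in\mathcal{G}_{2L,N}^m(T)$, and the gain $\psi_{N,k}^{m,L}(T)$ follows.
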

\begin{proof}
    The claimed monotonicity of $\psi_{N,k}^{m,L}(T)$  can be seen to follow from the fact that $\psi_{N,k}^{m,L}(T) \ge 0$ and monotonicity of $\gop_{N,1}^{m,L}(T)$ with respect to $L$ by using \eqref{eq:gopt-homogeneity} to write
    \begin{equation}
        \psi_{N,k}^{m,L}(T) = L \frac{2^{\frac{k}{n}}-1}{(n-k)!}\left( \frac{1}{T} + \gop_{1,1}^{m,L/N}(T)  \right)^{k-n}
    \end{equation}
    with $n=m+1$.
    Validity of \eqref{eq:gopbound:2L:L} for $N = 0$ is trivial.
    To prove  the remaining claim, for $N > 0$, define $\tau = [T^{-1} + N^{-1} \gop_{N,1}^{m,L}(T)]^{-1}$.
    It is first shown that $L\tau^n/n! \le N$.
    Otherwise, there would exist $\tilde \tau < \tau$ satisfying $L \tilde \tau^n/n! = N$. The function $f \in \FL^m$ defined via $f(t)=0$ for $t \in [0,T-\tilde\tau)$ and $f(t) = L (t-T+\tilde\tau)^{m+1}/(m+1)!$ for $t\in [T-\tilde\tau,\infty)$ satisfies $f \in \mathcal{G}_{L,N}^m(T)$ defined in \eqref{eq:def:G} and $f^{(1)}(T) = L\tilde\tau^m/m! = nN/\tilde\tau$. This yields the bound $N/\tilde\tau < nN/\tilde \tau \le \gop_{N,1}^{m,L}(T)$ on $\gop_{N,1}^{m,L}(T)$ and thus results in the contradiction $\tau \le N/\gop_{N,1}^{m,L}(T) < \tilde \tau$.
Now, consider $g \in \mathcal{G}_{L,N}^m(T)$ such that $g(T) = N$ and $g^{(k)}(T) = \gop_{N,k}^{m,L}(T)$, whose existence is guaranteed by \cref{prop:gopt:extremal}.
Since, according to \eqref{eq:gopt-timescaling}, $|\dot g(t)| \le \gop_{N,1}^{m,L}(t) \le \frac{T}{t} \gop_{N,1}^{m,L}(T)$ holds for all $t \in (0,T]$, this function is seen to satisfy 
\begin{align}
    g(t) &\ge N - \frac{T}{t} \gop_{N,1}^{m,L}(T) (T-t) \ge N - \frac{T \tau}{T-\tau} \gop_{N,1}^{m,L}(T) 
    = N - \frac{ \gop_{N,1}^{m,L}(T)}{\tau^{-1} - T^{-1}} = 0
\end{align}
for all $t \in [T-\tau,T]$ by using the definition of $\tau$.
Now, consider $h \in \F_{2L}^m$ defined as
\begin{equation}
    h(t) = \begin{cases}
        g(t) & t \in [0, T-\tau) \\
        g(t)-\frac{L(t-T+\tau)^n}{n!} & t \in [T-\tau, T-2^{-\frac{1}{n}} \tau) \\
        g(t)-\frac{L(t-T+\tau)^n}{n!}+\frac{2L (t-T+2^{-\frac{1}{n}} \tau)^n}{n!} & t \in [T-2^{-\frac{1}{n}} \tau, T]
    \end{cases}
\end{equation}
Since $L \tau^n/n! \le N$, it is clear that $h(t) \ge g(t) - N \ge -N$ for $t \in [T-\tau, T]$.
Furthermore, $h(T) = g(T)$, and hence $|h(t)| \le N$ holds for all $t \in [0,T]$ by construction.
Thus,
\begin{align}
    \gop_{N,k}^{m,2L}(T) &\ge h^{(k)}(T) = g^{(k)}(T) - \frac{L \tau^{n-k}}{(n-k)!} + \frac{2L \cdot 2^{-\frac{n-k}{n}} \tau^{n-k}}{(n-k)!} \nonumber \\
    &= \gop_{N,k}^{m,L}(T) + \frac{L (2^{\frac{k}{n}} - 1)}{(n-k)!} \left( \frac{1}{T} + \frac{\gop_{N,1}^{m,L}(T)}{N} \right)^{k-n}
\end{align}
whence the claim follows.
\end{proof}

\begin{proposition}
\label{prop:Iti:length}
    Let $m,k\in\NN$, $k\le m$, $L,\delta \in \RR_{\ge 0}$, $\theta \in \RR_{> 0}$, $u \in \mathcal{U}$.
    Suppose that $\|f - u\|_{\infty,\theta} \ge \delta$ for all $f \in \FL^m$.
    Then, the interval $[\I_k^{m,L} u](\theta)$ is nonempty and has length at least $\psi_{\delta,k}^{m,L}(\theta)$.
\end{proposition}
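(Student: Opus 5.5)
By \cref{rem:Iti}, the interval can be written as
\begin{equation*}
[\I_k^{m,L}u](\theta) = \Bigl[\sup_{\hat N \ge 0} \bigl(-G_{\theta,k}(\hat N,-u)\bigr),\ \inf_{\hat N \ge 0} G_{\theta,k}(\hat N,u)\Bigr].
\end{equation*}
Its length is therefore $\inf_{N_1 \ge 0} G_{\theta,k}(N_1,u) + \inf_{N_2 \ge 0} G_{\theta,k}(N_2,-u)$. This infimum of a separable sum equals the infimum over all pairs $(N_1,N_2)$ of $G_{\theta,k}(N_1,u) + G_{\theta,k}(N_2,-u)$. Hence both claims, nonemptiness and length at least $\psi_{\delta,k}^{m,L}(\theta)$, follow once I show the pairwise bound
\begin{equation*}
G_{\theta,k}(N_1,u) + G_{\theta,k}(N_2,-u) \ge \psi_{\delta,k}^{m,L}(\theta) \quad \text{for all } N_1,N_2 \ge 0 .
\end{equation*}
If the interval is unbounded, for instance because every constraint is infeasible, the statement holds trivially.

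\emph{Infeasible pairs.} First I dispose of the pairs where one term is infeasible. If no $h \in \FL^m$ satisfies $\|h-u\|_{\infty,\theta} \le N_1$, then $G_{\theta,k}(N_1,u) = +\infty$ by definition and the bound is trivial. The same holds for $N_2$ and $-u$. Since $\FL^m = -\FL^m$, the function $-u$ also has distance at least $\delta$ from $\FL^m$ on $[0,\theta]$. So feasibility of either term forces the corresponding $N_j \ge \delta$. It therefore remains to treat pairs with both terms feasible and $N_1, N_2 \ge \delta$.

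\emph{Feasible pairs.} Suppose first $N_2 \le N_1$. Feasibility of $G_{\theta,k}(N_2,-u)$ gives some $h$ with $\|h+u\|_{\infty,\theta} \le N_2$. Because $G_{\theta,k}$ depends only on the values on $[0,\theta]$, I can redefine $u$ after $\theta$ so that $u \in \FL^m + \E_{N_2}$. Then \cref{lem:Gti:properties}\ref{it:G:minlength}), with $N = N_2$ and $N' = N_1$, yields
\begin{equation*}
G_{\theta,k}(N_1,u) + G_{\theta,k}(N_2,-u) \ge \gop_{N_2,k}^{m,2L}(\theta) - \gop_{N_2,k}^{m,L}(\theta).
\end{equation*}
By \cref{lem:psi:bound}, the right-hand side is at least $\psi_{N_2,k}^{m,L}(\theta)$. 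By the monotonicity of $\psi$ in $N$ stated there, together with $N_2 \ge \delta$, this is at least $\psi_{\delta,k}^{m,L}(\theta)$. The case $N_1 < N_2$ is symmetric: apply the same lemma to $-u$ in place of $u$, with $N = N_1$ and $N' = N_2$, using $-(-u) = u$.

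The only delicate points are bookkeeping rather than analysis. One must allow the values $\pm\infty$ correctly in the identity turning the length into an infimum over pairs. One must also justify the reduction to $u \in \FL^m + \E_{N}$ when applying \cref{lem:Gti:properties}, since $u$ is only assumed to lie in $\mathcal{U}$. This reduction is handled by causality, as $G_{\theta,k}$ depends only on $u|_{[0,\theta]}$. For $\delta = 0$, the argument gives nonnegative length, i.e., nonemptiness, since $\psi_{0,k}^{m,L} = 0$.
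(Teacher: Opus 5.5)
Your proof is correct and follows essentially the same route as the paper. You exclude $\hat N < \delta$ by infeasibility, bound each feasible pair via \cref{lem:Gti:properties}\ref{it:G:minlength}) with the larger noise level in the $u$-slot, and then apply \cref{lem:psi:bound} together with the monotonicity of $\psi$ in $N$. You also make explicit two points the paper leaves implicit: the symmetric case $N_1 < N_2$, handled by passing to $-u$, and the redefinition of $u$ after $\theta$ so that $u \in \FL^m + \E_{N_2}$, which is needed to formally invoke that lemma.
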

\begin{proof}
    Note that $G_{\theta,i}(\hat N_1, u) = \infty$ for all $\hat N < \delta$.
    Hence, if the interval $[\mathcal{I}_k^{m,L} u](\theta)$ has length less than $\psi_{\delta,k}^{m,L}(\theta)$ (or if the interval is empty, in case that value is zero), then there exist $\hat N_1, \hat N_2$ with $\hat N_1 \ge \hat N_2 \ge \delta$ such that $G_{\theta,i}(\hat N_1, u) + G_{\theta,i}(\hat N_2, -u) < \psi_{\delta,k}^{m,L}(\theta)$.
    According to \cref{lem:Gti:properties}, item \ref{it:G:minlength}) and \cref{lem:psi:bound}, this yields the contradiction
    \begin{equation}
        G_{\theta,i}(\hat N_1, u) + G_{\theta,i}(\hat N_2, -u) \ge \gop_{\hat N_2,i}^{m,2L}(\theta) - \gop_{\hat N_2,i}^{m,L}(\theta) \ge \psi_{\hat N_2,k}^{m,L}(\theta) \ge \psi_{\delta,k}^{m,L}(\theta),
    \end{equation}
    proving the claim.
\end{proof}

\subsubsection{Proof of the Theorem}

We are now able to prove \cref{thm:det-optimal:main}.
\begin{proof}[Proof of \cref{thm:det-optimal:main}]
    For item~\ref{it:det-optimal:main:existence}), non-emptiness follows from \cref{prop:Iti:length}.
To show also boundedness, consider any $u \in \mathcal{U}$.
    Then, $u$ is uniformly bounded on $[0, \theta]$, i.e., there exists $M$ such that $\|u\|_{\infty,\theta} \le M$.
    According to \cref{lem:Gti:properties}, item~\ref{it:G:boundedness}), $\sup [\I_i^{m,L} u](\theta) \le G_{\theta,i}(M,u) \le 2 \gop_{M,i}^{m,L}(\theta) + \gop_{M,i}^{m,2L}(\theta)$ then holds, with an analogous bound being obtained for the infimum.

    Item~\ref{it:det-optimal:main:characterization}) is obtained by utilizing \cref{prop:optimal:exact:characterization} and noting that $[\Diff_i u](t) \in [\I_i^{m,L} u](t)$ if and only if $[\Diff_i u](t) \in [-G_{t,i}(\hat N, -u), G_{t,i}(\hat N, u)]$ for all $\hat N \ge 0$ (cf. \cref{rem:Iti}).

    To prove item~\ref{it:det-optimal:main:construction}), note that $\Diff$ is well-defined due to item~\ref{it:det-optimal:main:existence}) and deterministically optimal over $\FL^m$ due to item~\ref{it:det-optimal:main:characterization}).
    To show also causality and robustness almost from the beginning over $\mathcal{U}$, write    
    \begin{equation}
        \inf\ [\I_i^{m,L} u](t) = -\inf_{\hat N \in \RR_{\ge 0}} G_{t,i}(\hat N, -u), \qquad
        \sup\ [\I_i^{m,L} u](t) = \inf_{\hat N \in \RR_{\ge 0}} G_{t,i}(\hat N, u).
    \end{equation}
    It obviously suffices to show both properties for $\sup\ [\I_i^{m,L} u](t)$, because they then hold also for the infimum as well as for their average, i.e., for the output of $\Diff$.
    Causality follows from the fact that $\|u_1 - u_2\|_{\infty, \theta} = 0$ implies $\sup\ [\I_i^{m,L} u_1](t) = \sup\ [\I_i^{m,L} u_2](t)$ according to item~\ref{it:G:robustness}) of \cref{lem:Gti:properties}.
    To show robustness almost from the beginning over $\mathcal{U}$, consider $u_1 \in \mathcal{U}$, $u_2 = u_1 + \eta$ with $\eta \in \E_{\epsilon}$.
    As a consequence, also $u_2 \in \mathcal{U}$ and $\|u_1 - u_2\|_{\infty,\theta} \le \epsilon$.
    Again applying \cref{lem:Gti:properties}, item~\ref{it:G:robustness}) thus yields
    \begin{equation}
        \left|\sup\ [\I_i^{m,L} u_1](t) - \sup\ [\I_i^{m,L} u_2](t)\right| \le 
\gop_{\epsilon,i}^{m,2L}(t)
    \end{equation}
    which implies $Q_{\epsilon,i}^{\mathcal{U}}(t) \le  \gop_{\epsilon,i}^{m,2L}(t)$, because the right-hand side is non-increasing with respect to $t$.
    Taking the limit as $\epsilon \to 0^+$ consequently yields $Q_i^{\mathcal{U}}(t) = 0$ for all $t > 0$, completing the proof.
\end{proof}

\section{Conclusions}
\label{sec:conclusions}

This paper has derived the theoretical fundamental limitations of causal differentiators of arbitrary orders in terms of the lowest achievable worst-case differentiation error and has constructively established the existence of deterministically optimal differentiators, which were precisely defined using the results on the achievable worst-case error. Specifically, the constructed deterministically optimal differentiators were shown to be exact from the beginning and strongly robust, the latter being a concept explicitly analyzed here also for the first time. Worthy of mention is the fact that the existence of linear (time-varying) differentiators that achieve the lowest worst-case error under knowledge of a noise amplitude bound was established by application of the Hahn-Banach dominated extension theorem, a technique that seems to not have been applied to differentiators before. Since deterministically optimal differentiators of high orders are time-varying and require knowledge of the whole history of their input, future work may focus on the fundamental theoretical limitations of time-invariant differentiators, and of those requiring only a finite piece of the previous input evolution.

\bibliographystyle{siamplain}
\bibliography{literature}           

\end{document}